\newtheorem{thm}{Theorem}[section]
\newtheorem{cor}[thm]{Corollary}
\newtheorem{prop}[thm]{Proposition}
\theoremstyle{definition}
\newtheorem{dfn}[thm]{Definition}
\newtheorem{ex}[thm]{Example}
\newtheorem{claim}[thm]{Claim}
\newtheorem{lem}[thm]{Lemma}
\newtheorem{fact}[thm]{Fact}
\theoremstyle{remark}
\newtheorem{rem}[thm]{Remark}
\newcommand{\Ob}{\mathrm{Ob}}
\newcommand{\id}{\mathrm{id}}
\newcommand{\ppr}{^{\prime}}
\newcommand{\Map}{\mathrm{Map}}
\newcommand{\Mon}{\mathit{Mon}}
\newcommand{\Sett}{\mathit{Set}}
\newcommand{\Ring}{\mathit{Ring}}
\newcommand{\TamG}{\mathit{Tam}(G)}
\newcommand{\Gs}{{}_G\mathit{set}}
\newcommand{\Z}{\mathbb{Z}}
\newcommand{\N}{\mathbb{N}}
\newcommand{\pro}{\mathrm{pr}}
\newcommand{\Pro}{\mathrm{Pr}}
\newcommand{\res}{\mathrm{res}}
\newcommand{\ind}{\mathrm{ind}}
\newcommand{\jnd}{\mathrm{jnd}}
\newcommand{\I}{\mathscr{I}}
\newcommand{\J}{\mathscr{J}}
\newcommand{\Ical}{\mathcal{I}}
\newcommand{\Lcal}{\mathcal{L}}
\newcommand{\Pcal}{\mathcal{P}}
\newcommand{\Qcal}{\mathcal{Q}}
\newcommand{\Scal}{\mathcal{S}}
\newcommand{\Spec}{\mathit{Spec}}
\numberwithin{equation}{section}
\begin{document}

\title[Spectrum of the Burnside Tambara functor on a cyclic $p$-group]{Spectrum of the Burnside Tambara functor on a cyclic $p$-group}

\author{Hiroyuki NAKAOKA}
\address{Department of Mathematics and Computer Science, Kagoshima University, 1-21-35 Korimoto, Kagoshima, 890-0065 Japan}

\email{nakaoka@sci.kagoshima-u.ac.jp}


\thanks{The author wishes to thank Professor Fumihito Oda for his suggestions and useful comments.}
\thanks{The author also wishes to thank Professor Akihiko Hida for his question and comments.}
\thanks{This work is supported by JSPS Grant-in-Aid for Young Scientists (B) 22740005}

\begin{abstract}
For a finite group $G$, a Tambara functor on $G$ is regarded as a $G$-bivariant analog of a commutative ring. In this analogy, previously we have defined an ideal of a Tambara functor. In this article, we will demonstrate a calculation of the prime spectrum of the Burnside Tambara functor, when $G$ is a cyclic $p$-group for a prime integer $p$.
\end{abstract}

\maketitle

\section{Introduction and Preliminaries}

A {\it Tambara functor} on a finite group $G$ is firstly defined by Tambara \cite{Tam} in the name \lq TNR-functor', to treat the multiplicative transfers of Green functors. The terminology {\it Tambara functor} firstly appeared in Brun's paper, when he used it to describe the structure of Witt-Burnside rings \cite{Brun}. It consists of a triplet $T=(T^{\ast},T_+,T_{\bullet})$, where the {\it additive part} $(T^{\ast},T_+)$ forms a Mackey functor, whereas the {\it multiplicative part} $(T^{\ast},T_{\bullet})$ forms a semi-Mackey functor. This is just like a commutative ring consists of an additive abelian group structure and a multiplicative commutative semi-group structure. In fact a Tambara functor is nothing other than a commutative ring when $G$ is trivial. In this sense, this notion is regarded as a Mackey-functor-theoretic analog (or, \lq $G$-bivariant analog') of a commutative ring \cite{Yoshida}.

In this analogy, some algebraic notions in commutative ring theory find their analogs in Tambara functor theory. We have ideals \cite{N_Ideal}, fractions \cite{N_Frac}, and polynomials \cite{N_TamMack} of Tambara functors. These are mutually related, as they should be, and moreover in connection with the celebrated Dress construction \cite{O-Y3}, \cite{N_DressPolyHopf}. The most typical Tambara functor is the {\it Burnside Tambara functor} $\Omega_G$ (Example \ref{ExTam}), which plays a role just like $\Z$ in the ordinary commutative ring theory.

Especially, in the $G$-bivariant analog of the ideal theory, the prime spectrum $\Spec\,T$ has been defined for any Tambara functor $T$. In our previous short note \cite{N_Specp}, we have demonstrated a calculation of the prime spectrum of $\Omega_G$, when $G$ is a group of prime order $p$. In this article, extending the method in \cite{N_Specp}, we calculate $\Spec\,\Omega_G$ when $G$ is a {\it cyclic $p$-group} for a prime integer $p$. 
To determine the prime ideals of $\Omega_G$, the key observation is that the set of subgroups of $G$ is totally ordered
\[ e=H_0<H_1<\cdots<H_r=G, \]
and thus $\Omega_G$ can be regarded as a sequence of commutative rings equipped with adjacent structure morphisms
\[
\xy
(-48,0)*+{R_0}="0";
(-32,0)*+{R_1}="1";
(-16,0)*+{R_2}="2";
(1,0)*+{\cdots}="3";
(20,0)*+{R_k}="4";
(39,0)*+{\cdots}="5";
(58,0)*+{R_r}="6";
(61,-1)*+{,}="7";
(-45,2)*+{}="04";
(-45,0)*+{}="05";
(-45,-2)*+{}="06";
(-35,2)*+{}="11";
(-35,0)*+{}="12";
(-35,-2)*+{}="13";
(-29,2)*+{}="14";
(-29,0)*+{}="15";
(-29,-2)*+{}="16";
(-19,2)*+{}="21";
(-19,0)*+{}="22";
(-19,-2)*+{}="23";
(-13,2)*+{}="24";
(-13,0)*+{}="25";
(-13,-2)*+{}="26";
(-3,2)*+{}="31";
(-3,0)*+{}="32";
(-3,-2)*+{}="33";
(5,2)*+{}="34";
(5,0)*+{}="35";
(5,-2)*+{}="36";
(17,2)*+{}="41";
(17,0)*+{}="42";
(17,-2)*+{}="43";
(23,2)*+{}="44";
(23,0)*+{}="45";
(23,-2)*+{}="46";
(35,2)*+{}="51";
(35,0)*+{}="52";
(35,-2)*+{}="53";
(43,2)*+{}="54";
(43,0)*+{}="55";
(43,-2)*+{}="56";
(55,2)*+{}="61";
(55,0)*+{}="62";
(55,-2)*+{}="63";
{\ar^{\ind} "04";"11"};
{\ar|*{_{\res}} "12";"05"};
{\ar_{\jnd} "06";"13"};
{\ar^{\ind} "14";"21"};
{\ar|*{_{\res}} "22";"15"};
{\ar_{\jnd} "16";"23"};
{\ar^{\ind} "24";"31"};
{\ar|*{_{\res}} "32";"25"};
{\ar_{\jnd} "26";"33"};
{\ar^{\ind} "34";"41"};
{\ar|(0.45)*{_{\res}} "42";"35"};
{\ar_{\jnd} "36";"43"};
{\ar^{\ind} "44";"51"};
{\ar|(0.45)*{_{\res}} "52";"45"};
{\ar_{\jnd} "46";"53"};
{\ar^{\ind} "54";"61"};
{\ar|(0.45)*{_{\res}} "62";"55"};
{\ar_{\jnd} "56";"63"};
\endxy
\]
where $R_k=\Omega_G(G/H_k)$.
With this identification, an ideal of $\Omega_G$ can be regarded as a sequence $[I_0,\ldots,I_r]$ of ideals $I_k\subseteq R_k$.
A sequence $[I_0,\ldots,I_r]$ forms an ideal of $\Omega_G$ if and only if the condition
\begin{itemize}
\item[\fbox{$\Ical(k)$}]
$\ \ \ind^k_{k-1}(I_{k-1})\subseteq I_k,\ \ 
\res^k_{k-1}(I_k)\subseteq I_{k-1},\ \ 
\jnd^k_{k-1}(I_{k-1})\subseteq I_k$.
\end{itemize}
is satisfied for each $1\le k\le r$ (Corollary \ref{CorIk}).
The restriction of an ideal $\I=[I_0,\ldots,I_k]$ of $\Omega_{H_k}$ onto $H_i$ is given by $\I|_{H_i}=[I_0,\ldots,I_i]$ for any $0\le i\le k$. On the contrary, we can consider an extension of $\I$ onto $H_{k+1}$, namely, an ideal $\I\ppr$ of $\Omega_{H_{k+1}}$ satisfying $\I\ppr |_{H_k}=\I$. In particular, the largest and the smallest among such $\I\ppr$ are explicitly given by $\Lcal\I$ and $\Scal\I$ in Definition \ref{DefLandS}.
This allows us an inductive construction of ideals in $\Omega_G$.

Whether an ideal $\I=[I_0,\ldots,I_r]$ is prime or not can be also checked inductively on $k$. In fact, it is prime if and only if the condition
\begin{itemize}
\item[\fbox{$\Pcal(k)$}] For any $0\le i\le k$, $a\in (\res^k_{k-1})^{-1}(I_{k-1})$ and $b\in (\res^i_{i-1})^{-1}(I_{i-1})\setminus I_i$, 
\[ a\cdot\jnd^k_i(b)\in I_k\ \ \Longrightarrow\ \ a\in I_k \]
holds.
\end{itemize}
is satisfied for each $0\le k\le r$ (Proposition \ref{PropEP}).
Consequently, any restriction $\I|_{H_i}$ of a prime ideal $\I=[I_0,\ldots,I_k]\subseteq\Omega_{H_k}$ is again prime. Especially $I_0$ should be a prime ideal in $R_0\cong\Z$, and thus equal to $0$, $(p)$, or $(q)$ for some prime integer $q\ne p$.
By determining prime ideals over $0, (p), (q)$, namely, prime ideals $\I=[I_0,\ldots,I_r]$ satisfying $I_0=0,(p),(q)$ respectively, we obtain the following result. In particular, the dimension of $\Omega_G$ is calculated as $\dim\Omega_G=r+1$ (Corollary \ref{CorDim}).

\begin{thm}
Let $G$ be a cyclic $p$-group of $p$-rank $r$. The prime ideals of $\Omega_G\in\Ob(\TamG)$ are as follows.
\begin{itemize}
\item[{\rm (i)}] Over $(q)\subseteq R_0$, there are $\ell+1$ prime ideals
\[ \Scal^r(q)\subsetneq\Lcal\Scal^{r-1}(q)\subsetneq\cdots\subsetneq\Lcal^r(q). \]
\item[{\rm (ii)}] Over $(p)\subseteq R_0$, there is only one prime ideal $\Lcal^r(p)$.
\item[{\rm (iii)}] Over $0\subseteq R_0$, there are $\ell+1$ prime ideals
\[ 0\subsetneq\Lcal(0)\subsetneq\cdots\subsetneq\Lcal^r(0). \]
\end{itemize}
Thus we have
\begin{eqnarray*}
\Spec\,\Omega_G&\!\!\! =&\!\!\!\!\{ \Lcal^r(p)\}\ \cup\ \{\Lcal^i(0)\mid 0\le i\le r\}\\
&&\!\!\cup\ \,\{ \Lcal^i\Scal^{r-i}(q)\mid 0\le i\le r,\ q\ \text{is a prime different from}\ p \}.
\end{eqnarray*}
\end{thm}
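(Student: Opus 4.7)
The plan is to proceed by induction on the $p$-rank $r$, using Proposition \ref{PropEP} as the main tool: an ideal $\I=[I_0,\ldots,I_r]$ is prime if and only if $\Pcal(k)$ holds for every $0\le k\le r$. Since any restriction of a prime ideal is prime, the classification reduces to determining, at each rank $k$, which ideals of $\Omega_{H_k}$ extending a given prime $\I$ of $\Omega_{H_{k-1}}$ are themselves prime. Every such extension $\I\ppr$ lies in the interval $\Scal\I\subseteq\I\ppr\subseteq\Lcal\I$, so the problem becomes the analysis of this interval together with the treatment of its two extremal elements.

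The first substep is to show that $\Lcal\I$ is prime whenever $\I$ is. Using the explicit description of $\Lcal\I$ from Definition \ref{DefLandS}, the new instance of $\Pcal(k)$ at the top level reduces to a statement already implied by $\Pcal(k-1)$ via the compatibility of $\res^k_{k-1}$ and $\jnd^k_{k-1}$ with the defining formula of $\Lcal$. This at once produces the full chain in case (iii), the candidate $\Lcal^r(p)$ in case (ii), and the top element $\Lcal^r(q)$ in case (i).

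Next, for each choice of base prime $I_0\in\{0,(p),(q)\}$, I would examine $\Scal\I$ explicitly. The key computation is the evaluation of $\jnd^k_i(b)$ for $b\in(\res^i_{i-1})^{-1}(I_{i-1})\setminus I_i$, combined with the structure of the $\res^k_{k-1}$-preimage of $I_{k-1}$. Here $p$ behaves exceptionally: in a cyclic $p$-group, for base prime $(p)$ the norm of a suitable element over $I_{k-1}$ lands in every candidate extension, so $\Pcal(k)$ forces $a\in I_k$ already in $\Lcal\I$; no strictly smaller extension can be prime, and the over-$(p)$ chain collapses to the single ideal $\Lcal^r(p)$. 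For base primes $(q)$ with $q\ne p$ and for $0$, the analogous norm computations show that $\Scal$ preserves primality, so that alternating applications of $\Scal$ and $\Lcal$ produce the predicted $r+1$ prime ideals in each case.

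The last substep is to rule out any intermediate ideal strictly between $\Scal\I$ and $\Lcal\I$ (when both are prime). This amounts to exhibiting, for each such intermediate $I_k$, elements $a,b$ with $a\cdot\jnd^k_i(b)\in I_k$ witnessing the failure of $\Pcal(k)$. The main obstacle is precisely the explicit computation of $\jnd^k_{k-1}$ on the Burnside ring of $C_{p^k}$ and its interaction with the primes $p$ and $q$; this is what distinguishes the three cases and carries the technical content of the argument. Once these computations are in place, the enumeration stated in the theorem follows by assembling the three chains.
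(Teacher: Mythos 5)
Your proposal follows essentially the same strategy as the paper: the inductive framework via Proposition \ref{PropEP}, the bracketing of extensions of a prime $\I\subseteq\Omega_{H_{k-1}}$ between $\Scal\I$ and $\Lcal\I$, the primality of $\Lcal\I$ (which is in fact simpler than you indicate --- $\Pcal(k)$ holds vacuously for $\Lcal\I$ since $a\in L_k(I_{k-1})=I_k$ is automatic, cf.\ Corollary \ref{CorPrimePrime}), and the case-by-case analysis in which $p$ behaves exceptionally while $q\ne p$ and $0$ give full chains. The technical content you correctly flag as remaining is supplied in the paper by the ideals $J_{\ell,k}(x)$ of Definition \ref{DefJ} together with Proposition \ref{PropLSJ}, which compute the $\Scal/\Lcal$ iterates explicitly, show that over $(q)$ and $(p)$ there are no intermediate ideals between $S(I_{k-1})$ and $L(I_{k-1})$, identify the collapse $S(J_{\ell-1,0}(p^{e+1}))=J_{\ell,0}(p^{e+2})$ driving the over-$(p)$ case, and (via Lemma \ref{LemIdeal0} and Proposition \ref{Prop0}) rule out the genuine intermediates $(nF_{\ell,\ell-1})$ that do occur over $0$.
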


\bigskip

Throughout this article, the unit of a finite group $G$ will be denoted by $e$. Abbreviately we denote the trivial subgroup of $G$ by $e$, instead of $\{ e\}$.
$H\le G$ means $H$ is a subgroup of $G$.
$\Gs$ denotes the category of finite $G$-sets and $G$-equivariant maps.
If $H\le G$ and $g\in G$, then ${}^g\! H$ denotes the conjugate ${}^g\! H=gHg^{-1}$. Similarly, $H^{\! g}=g^{-1}Hg$.
A monoid is always assumed to be unitary and commutative. Similarly a ring is assumed to be commutative, with an additive unit $0$ and a multiplicative unit $1$.
We denote the category of monoids by $\Mon$, the category of rings by $\Ring$. 
A monoid homomorphism preserves units, and a ring homomorphism preserves $0$ and $1$.
For any category $\mathscr{C}$ and any pair of objects $X$ and $Y$ in $\mathscr{C}$, the set of morphisms from $X$ to $Y$ in $\mathscr{C}$ is denoted by $\mathscr{C}(X,Y)$.

\bigskip

\bigskip

We briefly recall the definition of a Tambara functor.

\begin{dfn}(\cite{Tam})\label{DefTamFtr}
A {\it Tambara functor} $T$ {\it on} $G$ is a triplet $T=(T^{\ast},T_+,T_{\bullet})$ of two covariant functors
\[ T_+\colon\Gs\rightarrow\Sett,\ \ T_{\bullet}\colon\Gs\rightarrow\Sett \]
and one contravariant functor
\[ T^{\ast}\colon\Gs\rightarrow\Sett \]
which satisfies the following. Here $\Sett$ is the category of sets.
\begin{enumerate}
\item $T^{\alpha}=(T^{\ast},T_+)$ is a Mackey functor on $G$.
\item $T^{\mu}=(T^{\ast},T_{\bullet})$ is a semi-Mackey functor on $G$.

\noindent Since $T^{\alpha},T^{\mu}$ are semi-Mackey functors, we have $T^{\ast}(X)=T_+(X)=T_{\bullet}(X)$ for each $X\in\Ob(\Gs)$. We denote this by $T(X)$.
\item (Distributive law)
If we are given an exponential diagram
\[
\xy
(-12,6)*+{X}="0";
(-12,-6)*+{Y}="2";
(0,6)*+{A}="4";
(12,6)*+{Z}="6";
(12,-6)*+{B}="8";
(0,0)*+{exp}="10";
{\ar_{f} "0";"2"};
{\ar_{p} "4";"0"};
{\ar_{\lambda} "6";"4"};
{\ar^{\rho} "6";"8"};
{\ar^{q} "8";"2"};
\endxy
\]
in $\Gs$, then
\[
\xy
(-18,7)*+{T(X)}="0";
(-18,-7)*+{T(Y)}="2";
(0,7)*+{T(A)}="4";
(18,7)*+{T(Z)}="6";
(18,-7)*+{T(B)}="8";
{\ar_{T_{\bullet}(f)} "0";"2"};
{\ar_{T_+(p)} "4";"0"};
{\ar^{T^{\ast}(\lambda)} "4";"6"};
{\ar^{T_{\bullet}(\rho)} "6";"8"};
{\ar^{T_+(q)} "8";"2"};
{\ar@{}|\circlearrowright "0";"8"};
\endxy
\]
is commutative. For the definition and basic properties of exponential diagrams, see \cite{Tam}.
\end{enumerate}

If $T=(T^{\ast},T_+,T_{\bullet})$ is a Tambara functor, then $T(X)$ becomes a ring for each $X\in\Ob(\Gs)$ (\cite{Tam}). Since $T^{\alpha}$ is a Mackey functor, by definition $T$ is \lq{\it additive}', in the sense that for any $X_1,X_2\in\Ob(\Gs)$, the inclusions $\iota_1\colon X_1\hookrightarrow X_1\amalg X_2$ and $\iota_2\colon X_2\hookrightarrow X_1\amalg X_2$ induce a natural isomorphism of rings $(T^{\ast}(\iota_1),T^{\ast}(\iota_2))\colon T(X_1\amalg X_2)\overset{\cong}{\longrightarrow}T(X_1)\times T(X_2)$.
For each $f\in\Gs(X,Y)$,
\begin{itemize}
\item $T^{\ast}(f)\colon T(Y)\rightarrow T(X)$ is a ring homomorphism, called the {\it restriction} along $f$, 
\item $T_+(f)\colon T(X)\rightarrow T(Y)$ is an additive homomorphism, called the {\it additive transfer} along $f$,
\item $T_{\bullet}(f)\colon T(X)\rightarrow T(Y)$ is a multiplicative homomorphism, called the {\it multiplicative transfer} along $f$.
\end{itemize}
$T^{\ast}(f),T_+(f),T_{\bullet}(f)$ are often abbreviated to $f^{\ast},f_+,f_{\bullet}$.
\end{dfn}

\begin{rem}\label{Remresindjnd}
If $f$ is the natural projection $\pro^H_K\colon G/K\rightarrow G/H$ for some $K\le H\le G$, then $f^{\ast},f_+,f_{\bullet}$ is written as
\begin{eqnarray*}
\res^H_K&=&(\pro^H_K)^{\ast}\ ,\\
\ind^H_K&=&(\pro^H_K)_+\,,\\
\jnd^H_K&=&(\pro^H_K)_{\bullet}\ .
\end{eqnarray*}
For a conjugate map $c_g\colon G/H^{\! g}\rightarrow G/H$, we define $c_{g,H}\colon T(G/H)\rightarrow T(G/H^{\! g})$ (or simply $c_g\colon T(G/H)\rightarrow T(G/H^{\! g})$) by
\[ c_{g,H}=T^{\ast}(c_g). \]
If $g$ belongs to the normalizer $N_G(H)$ of $H$ in $G$, then this gives an automorphism $c_{g,H}\colon T(G/H)\rightarrow T(G/H)$. With this $N_G(H)$-action, every $T(G/H)$ becomes an $N_G(H)/H$-ring.

Since any $G$-map is a union of compositions of natural projections and conjugate maps, the structure morphisms of a Tambara functor are completely determined by $\res^H_K,\ind^H_K,\jnd^H_K,c_{g,H}$ for $K\le H\le G, g\in G$, by virtue of the additivity.
\end{rem}


\begin{ex}\label{ExTam}
If we define $\Omega_G$ by
\[ \Omega_G(X)=K_0(\Gs/X) \]
for each $X\in\Ob(\Gs)$, where the right hand side is the Grothendieck ring of the category of finite $G$-sets over $X$, then $\Omega_G$ becomes a Tambara functor on $G$ \cite{Tam}. This is called the {\it Burnside Tambara functor}.
For each $f\in\Gs(X,Y)$,
\[ f_{\bullet}\colon\Omega_G(X)\rightarrow\Omega_G(Y) \]
is the one determined by
\[ \qquad\quad f_{\bullet}(A\overset{p}{\rightarrow}X)=(\Pi_f(A)\overset{\varpi}{\rightarrow}Y)\quad\  ({}^{\forall}(A\overset{p}{\rightarrow}X)\in\Ob(\Gs/X)), \]
where $\Pi_f(A)$ and $\varpi$ is
\[ \Pi_f(A)=\Set{ (y,\sigma)| \begin{array}{l}y\in Y,\\ \sigma\colon f^{-1}(y)\rightarrow A\ \ \text{is a map of sets},\\ p\circ\sigma=\id_{f^{-1}(y)}\end{array} }, \]

\[ \varpi(y,\sigma)=y.\hspace{7.3cm} \]
$G$ acts on $\Pi_f(A)$ by $g\cdot(y,\sigma)=(gy,{}^g\!\sigma)$, where ${}^g\!\sigma$ is the map defined by
\[ {}^g\!\sigma(x)=g\sigma(g^{-1}x)\quad({}^{\forall}x\in f^{-1}(gy)). \]
$f_+\colon\Omega_G(X)\rightarrow\Omega_G(Y)$ is an additive homomorphism satisfying
\[ f_+(A\overset{p}{\rightarrow}X)=(A\overset{f\circ p}{\rightarrow}Y)\quad\  ({}^{\forall}(A\overset{p}{\rightarrow}X)\in\Ob(\Gs/X)). \]
$f^{\ast}$ is defined by using a fiber product (\cite{Tam}). Namely, the Mackey-functor structure on the additive part of $\Omega$ is the usual one as in \cite{Bouc}.
\begin{rem}\label{RemB}
For any $K\le H\le G$, we have a natural isomorphism (cf.$\,\,$\cite{Bouc}) $\Omega_G(G/K)\cong\Omega_H(H/K)$, and we will identify them through this isomorphism. 
\end{rem}
We often abbreviate $\Omega_G$ to $\Omega$, if the base group is obvious from the context.
\end{ex}

\section{Burnside Tambara functor on a cyclic $p$-group}

Throughout this article, we fix a prime number $p$.
Let $G$ be a cyclic $p$-group of $p$-rank $r\ge0$,
and let $H_k\le G$ be its subgroup of order $p^k$
for each $0\le k\le r$.
In the following argument, without loss of generality we may assume
\begin{eqnarray*}
&G=\Z/p^r\Z,\qquad
H_k=p^{r-k}\Z/p^r\Z,&\\
&e=H_0<H_1\cdots <H_k<\cdots <H_r=G.&
\end{eqnarray*}
Then the $G$-set $G/H_k$ is canonically isomorphic to $\Z/p^{r-k}\Z$, and the natural projection
\[ G/H_k\overset{\pro^{\ell}_k}{\longrightarrow}G/H_{\ell}\quad (k\le\ell) \]
is identified with a map given by
\[ \Z/p^{r-k}\Z\rightarrow\Z/p^{r-\ell}\Z\ ;\ \ a\ \mathrm{mod}\ p^{r-k}\Z\ \mapsto\ a\ \mathrm{mod}\ p^{r-\ell}\Z \]
for any $a\in\Z$.

Since $G$ is commutative, each $\Omega(G/H_k)$ has a trivial $G$-action, and admits a natural $\Z$-basis
\[ \{ (G/H_i\overset{\pro^k_i}{\longrightarrow}G/H_k)\mid 0\le i\le k \}. \]
Thus if we denote $(G/H_i\overset{\pro^k_i}{\longrightarrow}G/H_k)$ by $X_{k,i}$, then it is a free $\Z$-module
\[ \Omega(G/H_k)=\bigoplus_{0\le i\le k}\Z X_{k,i} \]
with a trivial $G$-action.
Therefore, if we put $R_k=\displaystyle\bigoplus_{0\le i\le k} \Z X_{k,i}$, then the Tambara functor $\Omega_G$ is regarded just as a sequence of commutative rings $R_k$ and structure morphisms
\[
\xy
(-48,0)*+{R_0}="0";
(-32,0)*+{R_1}="1";
(-16,0)*+{R_2}="2";
(1,0)*+{\cdots}="3";
(20,0)*+{R_k}="4";
(39,0)*+{\cdots}="5";
(58,0)*+{R_r}="6";
(-45,2)*+{}="04";
(-45,0)*+{}="05";
(-45,-2)*+{}="06";
(-35,2)*+{}="11";
(-35,0)*+{}="12";
(-35,-2)*+{}="13";
(-29,2)*+{}="14";
(-29,0)*+{}="15";
(-29,-2)*+{}="16";
(-19,2)*+{}="21";
(-19,0)*+{}="22";
(-19,-2)*+{}="23";
(-13,2)*+{}="24";
(-13,0)*+{}="25";
(-13,-2)*+{}="26";
(-3,2)*+{}="31";
(-3,0)*+{}="32";
(-3,-2)*+{}="33";
(5,2)*+{}="34";
(5,0)*+{}="35";
(5,-2)*+{}="36";
(17,2)*+{}="41";
(17,0)*+{}="42";
(17,-2)*+{}="43";
(23,2)*+{}="44";
(23,0)*+{}="45";
(23,-2)*+{}="46";
(35,2)*+{}="51";
(35,0)*+{}="52";
(35,-2)*+{}="53";
(43,2)*+{}="54";
(43,0)*+{}="55";
(43,-2)*+{}="56";
(55,2)*+{}="61";
(55,0)*+{}="62";
(55,-2)*+{}="63";
{\ar^{\ind^1_0} "04";"11"};
{\ar|*{_{\res^1_0}} "12";"05"};
{\ar_{\jnd^1_0} "06";"13"};
{\ar^{\ind^2_1} "14";"21"};
{\ar|*{_{\res^2_1}} "22";"15"};
{\ar_{\jnd^2_1} "16";"23"};
{\ar^{\ind^3_2} "24";"31"};
{\ar|*{_{\res^3_2}} "32";"25"};
{\ar_{\jnd^3_2} "26";"33"};
{\ar^{\ind^k_{k-1}} "34";"41"};
{\ar|(0.45)*{_{\res^k_{k-1}}} "42";"35"};
{\ar_{\jnd^k_{k-1}} "36";"43"};
{\ar^{\ind^{k+1}_k} "44";"51"};
{\ar|(0.45)*{_{\res^{k+1}_k}} "52";"45"};
{\ar_{\jnd^{k+1}_k} "46";"53"};
{\ar^{\ind^r_{r-1}} "54";"61"};
{\ar|(0.45)*{_{\res^r_{r-1}}} "62";"55"};
{\ar_{\jnd^r_{r-1}} "56";"63"};
\endxy
\]
satisfying conditions in Definition \ref{DefTamFtr}.
Here, $\ind^k_{k-1},\res^k_{k-1},\jnd^k_{k-1}$ are the abbreviations of $\ind^{H_k}_{H_{k-1}},\res^{H_k}_{H_{k-1}},\jnd^{H_k}_{H_{k-1}}$. We use similar abbreviations in the rest. Remark that any structure morphism of $\Omega_G$ can be realized as a composition of these morphisms.

\begin{rem}\label{RemSeq}
By virtue of Remark \ref{RemB}, the first $k$-terms
\begin{equation}\label{k-seq}
\xy
(-48,0)*+{R_0}="0";
(-32,0)*+{R_1}="1";
(-16,0)*+{R_2}="2";
(1,0)*+{\cdots}="3";
(20,0)*+{R_k}="4";
(-45,2)*+{}="04";
(-45,0)*+{}="05";
(-45,-2)*+{}="06";
(-35,2)*+{}="11";
(-35,0)*+{}="12";
(-35,-2)*+{}="13";
(-29,2)*+{}="14";
(-29,0)*+{}="15";
(-29,-2)*+{}="16";
(-19,2)*+{}="21";
(-19,0)*+{}="22";
(-19,-2)*+{}="23";
(-13,2)*+{}="24";
(-13,0)*+{}="25";
(-13,-2)*+{}="26";
(-3,2)*+{}="31";
(-3,0)*+{}="32";
(-3,-2)*+{}="33";
(5,2)*+{}="34";
(5,0)*+{}="35";
(5,-2)*+{}="36";
(17,2)*+{}="41";
(17,0)*+{}="42";
(17,-2)*+{}="43";
{\ar^{\ind^1_0} "04";"11"};
{\ar|*{_{\res^1_0}} "12";"05"};
{\ar_{\jnd^1_0} "06";"13"};
{\ar^{\ind^2_1} "14";"21"};
{\ar|*{_{\res^2_1}} "22";"15"};
{\ar_{\jnd^2_1} "16";"23"};
{\ar^{\ind^3_2} "24";"31"};
{\ar|*{_{\res^3_2}} "32";"25"};
{\ar_{\jnd^3_2} "26";"33"};
{\ar^{\ind^k_{k-1}} "34";"41"};
{\ar|(0.45)*{_{\res^k_{k-1}}} "42";"35"};
{\ar_{\jnd^k_{k-1}} "36";"43"};
\endxy
\end{equation}
can be regarded as a sequence representing the Tambara functor $\Omega_{H_k}$. We always work under this identification in this paper.
With this identification, forgetting the entire group $G$, we can regard the Burnside Tambara functor $\Omega_{H_k}$ on a cyclic $p$-group $H_k$ of $p$-rank $k$, simply as a length $k$ sequence of rings $(\ref{k-seq})$ obtained inductively by adding $R_k,\ind^k_{k-1},\res^k_{k-1},\jnd^k_{k-1}$
to the length $k-1$ sequence
\[
\xy
(-32,0)*+{R_0}="1";
(-16,0)*+{R_1}="2";
(1,0)*+{\cdots}="3";
(23,0)*+{R_{k-1}}="4";
%
%
%
(-29,2)*+{}="14";
(-29,0)*+{}="15";
(-29,-2)*+{}="16";
(-19,2)*+{}="21";
(-19,0)*+{}="22";
(-19,-2)*+{}="23";
(-13,2)*+{}="24";
(-13,0)*+{}="25";
(-13,-2)*+{}="26";
(-3,2)*+{}="31";
(-3,0)*+{}="32";
(-3,-2)*+{}="33";
(5,2)*+{}="34";
(5,0)*+{}="35";
(5,-2)*+{}="36";
(18,2)*+{}="41";
(18,0)*+{}="42";
(18,-2)*+{}="43";
%
%
{\ar^{\ind^1_0} "14";"21"};
{\ar|*{_{\res^1_0}} "22";"15"};
{\ar_{\jnd^1_0} "16";"23"};
{\ar^{\ind^2_1} "24";"31"};
{\ar|*{_{\res^2_1}} "32";"25"};
{\ar_{\jnd^2_1} "26";"33"};
{\ar^{\ind^{k-1}_{k-2}} "34";"41"};
{\ar|(0.45)*{_{\res^{k-1}_{k-2}}} "42";"35"};
{\ar_{\jnd^{k-1}_{k-2}} "36";"43"};
\endxy
\qquad\quad
\]
corresponding to $\Omega_{H_{k-1}}$.
This observation enables us an inductive construction of ideals of in $\Omega$.
\end{rem}

\smallskip

We go on to describe $R_k$ and the structure morphisms. As above, we have $R_k=\underset{0\le i\le k}{\bigoplus}\Z X_{k,i}$ as a module.
\begin{rem}\label{Remind}
Additive transfer $\ind^{\ell}_k\colon R_k\rightarrow R_{\ell}$ is given by
\[ \ind^{\ell}_k(X_{k,i})=X_{\ell,i}\quad(0\le i\le k) \]
for each $k\le \ell$.
\end{rem}
\begin{proof}
This is obvious.
\end{proof}

If we take a fiber product of $\pro^k_i\colon G/H_i\rightarrow G/H_k$ and $\pro^k_j\colon G/H_j\rightarrow G/H_k$
\[
\xy
(-12,5.6)*+{G/H_i\underset{G/H_k}{\times}G/H_j}="0";
(1,7)*+{}="1";
(12,7)*+{G/H_j}="2";
(-12,3)*+{}="3";
(-12,-7)*+{G/H_i}="4";
(12,-7)*+{G/H_k}="6";
(0,0)*+{\square}="8";
{\ar^{} "1";"2"};
{\ar_{} "3";"4"};
{\ar^{\pro^k_j} "2";"6"};
{\ar_{\pro^k_i} "4";"6"};
\endxy
\]
where $0\le i,j\le k$, then by the Mackey decomposition formula, we have
\begin{equation}\label{Fibij}
G/H_i\underset{G/H_k}{\times}G/H_j=%
\begin{cases}
\ \underset{p^{k-j}}{\coprod}G/H_i&(i\le j)\\
\ \underset{p^{k-i}}{\coprod}G/H_j&(j\le i).
\end{cases}
\end{equation}
In any case, the cardinality of this $G$-set is
\[ |G/H_i\underset{G/H_k}{\times}G/H_j|=p^{r+k-(i+j)}. \]
Equation $(\ref{Fibij})$ is also written as
\[ G/H_i\underset{G/H_k}{\times}G/H_j=\underset{p^{k-\max(i,j)}}{\coprod}G/H_{\min(i,j)}. \]
As a corollary, we obtain the following.
\begin{cor}\label{CorRk}
For any $0\le k\le r$, the following holds.
\begin{enumerate}
\item For any $i,j\le k$,
\[ X_{k,i}\cdot X_{k,j}=%
\begin{cases}
\ p^{k-j}X_{k,i}&(i\le j)\\
\ p^{k-i}X_{k,j}&(j\le i)
\end{cases}
\]
in $R_k$. In fact, as a residue ring of the polynomial ring $\Z[X_{k,i}\mid0\le  i\le k]$ over indeterminates $X_{k,i}\ (0\le i\le k)$, $R_k$ is written as
\[ \Z[X_{k,i}\mid 0\le i\le k]/(\{ X_{k,i}X_{k,j}-p^{k-\max(i,j)}X_{k,\min(i,j)}\mid 0\le i,j\le k \}). \]
In particular, $X_{k,k}=1$ is the unit of $R_k$.
\item For any $\ell\ge k$, $\res^{\ell}_k\colon R_{\ell}\rightarrow R_k$ is given by
\[ \res^{\ell}_k(X_{\ell,i})=%
\begin{cases}
\ p^{\ell-k}X_{k,i}&(i\le k)\\
\ p^{\ell-i}X_{k,k}=p^{\ell-i}&(i\ge k).
\end{cases}
\]
\end{enumerate}
\end{cor}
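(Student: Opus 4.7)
The plan is to derive both parts of the corollary directly from the fiber product computation (\ref{Fibij}) and the definitions of multiplication and restriction in the Burnside Tambara functor.

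For part (1), recall that multiplication in $R_k = \Omega(G/H_k) = K_0(\Gs/(G/H_k))$ is induced by fiber product over $G/H_k$. Hence
\[ X_{k,i}\cdot X_{k,j} = \bigl(G/H_i\underset{G/H_k}{\times}G/H_j \to G/H_k\bigr), \]
and applying (\ref{Fibij}) immediately yields $p^{k-\max(i,j)} X_{k,\min(i,j)}$. For the ring presentation, I consider the tautological surjection
\[ \varphi\colon \Z[Y_i\mid 0\le i\le k]/\bigl(Y_iY_j-p^{k-\max(i,j)}Y_{\min(i,j)}\bigr)\twoheadrightarrow R_k,\quad Y_i\mapsto X_{k,i}, \]
where $Y_k$ is identified with the multiplicative unit since $X_{k,k}=(\id_{G/H_k})$ is itself the identity of $R_k$. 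Using the quadratic relations one reduces any monomial of degree $\ge 2$ to a $\Z$-multiple of a single $Y_j$ with $j$ the minimum of the indices, so the quotient is $\Z$-spanned by $1,Y_0,\ldots,Y_{k-1}$. Since $\varphi$ sends these to the declared $\Z$-basis of $R_k$, it is an isomorphism.

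For part (2), the restriction $\res^\ell_k = (\pro^\ell_k)^{\ast}$ is the pullback functor along $\pro^\ell_k\colon G/H_k\to G/H_\ell$, so
\[ \res^\ell_k(X_{\ell,i}) = \bigl(G/H_i\underset{G/H_\ell}{\times}G/H_k \to G/H_k\bigr). \]
Applying (\ref{Fibij}) with $k$ and $j$ replaced by $\ell$ and $k$ respectively gives $\coprod_{p^{\ell-\max(i,k)}} G/H_{\min(i,k)}$. Splitting into the two cases $i\le k$ and $i\ge k$ yields the stated formulas, with $X_{k,k}=1$ handling the second case.

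There is no substantial obstacle: once (\ref{Fibij}) is in hand, both assertions reduce to bookkeeping on the indices $i,j,k,\ell$. The only subtlety worth flagging is the normalization of the unit $X_{k,k}=1$ in part (1), which is needed to see that the listed quadratic relations, together with this identification, give an honest presentation rather than a quotient of rank one larger than $R_k$ (the redundant generator being the idempotent $Y_k$, which must be identified with $1$).
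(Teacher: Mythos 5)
Your proof takes the same route as the paper, which treats the corollary as an immediate consequence of the fiber product formula $(\ref{Fibij})$ together with the definitions of multiplication (fiber product over $G/H_k$) and restriction (pullback along $\pro^{\ell}_k$) in $\Omega$; your reconstruction of both parts is correct. The subtlety you flag about the presentation is also real: the quotient as literally displayed has $\Z$-rank $k+2$ (spanned by $1,X_{k,0},\ldots,X_{k,k}$, with $X_{k,k}-1$ not in the ideal since every listed generator has zero constant term), so the identification $X_{k,k}=1$ must be imposed as an additional relation, which is clearly what the paper intends given the immediately following remark ``In particular, $X_{k,k}=1$ is the unit of $R_k$.''
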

As for multiplicative transfers, we need some calculation. For the detail, see the Appendix.
\begin{cor}\label{Corjnd}
For any $\ell\ge k$, $\jnd^{\ell}_k\colon R_k\rightarrow R_{\ell}$ is given by
\begin{eqnarray*}
\jnd^{\ell}_k(\sum_{0\le i\le k}m_iX_{k,i})&=&m_kX_{\ell,\ell}%
+\sum_{k\le i< \ell}\frac{(m_k)^{p^{\ell-i}}-(m_k)^{p^{\ell-i-1}}}{p^{\ell-i}}X_{\ell,i}\\
&+&\sum_{0\le i<k}\frac{(\sum^k_{s=i}m_sp^{k-s})^{p^{\ell-k}}-(\sum^k_{s=i+1}m_sp^{k-s})^{p^{\ell-k}}}{p^{\ell-i}}X_{\ell,i}
\end{eqnarray*}
for any $m_0,\ldots,m_k\in\Z$.
\end{cor}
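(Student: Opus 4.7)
My plan is to verify the formula by direct computation using the $\Pi_f$ construction of Example \ref{ExTam}, with $f = \pro^{\ell}_k$. Since both sides of the asserted identity are polynomial in $m_0,\ldots,m_k$ (note that each quotient such as $\frac{m^{p^{\ell-i}}-m^{p^{\ell-i-1}}}{p^{\ell-i}}$ is an integer-valued polynomial by a $p$-adic argument), it is enough to prove the formula when all $m_i$ are non-negative integers. In that case $a=\sum m_i X_{k,i}$ is represented by the $G$-set $A=\coprod_i\coprod_{j=1}^{m_i}(G/H_i \overset{\pro^k_i}{\longrightarrow}G/H_k)$ over $G/H_k$, and $\jnd^{\ell}_k(a)=f_{\bullet}(A)$ is the $G$-set $\Pi_f(A)$ over $G/H_\ell$. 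The task reduces to decomposing $\Pi_f(A)$ into $G$-orbits and reading off the coefficient of each $X_{\ell,i}$.

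The second step is the orbit count. Since $G$ is cyclic, every fiber $f^{-1}(\bar y)\subset G/H_k$ is a torsor for $H_\ell/H_k\cong\Z/p^{\ell-k}\Z$, and the $G$-orbit of a pair $(\bar y,\sigma)$ is determined by the stabilizer of $\sigma$ under this cyclic action on sections. The stabilizers form a chain indexed by $H_i$ with $k\le i\le\ell$, and I would stratify sections by the smallest $i$ for which $\sigma$ is $H_i$-periodic. A section of period $p^{i-k}$ is determined by its restriction to one period, and at each position $x$ the value must lie in the fiber of $A\to G/H_k$ over $x$; counting such fibers gives the number of periodic sections as a power of $\sum_{s} m_s p^{k-s}$ (possibly restricted to the relevant range of $s$), and Möbius inversion over the cyclic chain converts these totals into orbit counts of each isotropy type, yielding the two kinds of summands in the statement. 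The split in the formula (sums $k\le i<\ell$ versus $0\le i<k$) arises because for $i\ge k$ a periodic section factors through $G/H_k$ and only sees the ``constant'' part $m_k X_{k,k}$ of $a$, while for $i<k$ the section also detects the components $X_{k,s}$ with $s\le i$.

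The main obstacle is verifying that Möbius inversion over the chain of cyclic subgroups produces precisely the stated quotients and that these quotients are integers, i.e.\ that $(\sum_{s=i}^{k}m_s p^{k-s})^{p^{\ell-k}}\equiv(\sum_{s=i+1}^{k}m_s p^{k-s})^{p^{\ell-k}}\pmod{p^{\ell-i}}$ (and the analogous congruence for the $m_k$-only part). This is a $p$-adic congruence of the Witt-vector type that follows by expanding $(N+m_i p^{k-i})^{p^{\ell-k}}$ by the binomial theorem and using $v_p(\binom{p^{\ell-k}}{t})=\ell-k-v_p(t)$ together with $v_p(p^{k-i})=k-i$. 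Once this divisibility is established, the telescoping of consecutive partial sums $N_i=\sum_{s\ge i}m_s p^{k-s}$ (with $N_i-N_{i+1}=m_i p^{k-i}$) matches the contribution of each orbit class, and the formula follows. The base case $\ell=k$ is an immediate check: the middle sum is empty and the last sum telescopes back to $\sum_{0\le i<k}m_i X_{k,i}$, so adding $m_k X_{k,k}$ recovers $a$.
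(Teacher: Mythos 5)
Your approach matches the paper's appendix: reduce to $m_i\ge 0$ via the polynomial-map fact, realize $\jnd^{\ell}_k$ applied to a $G$-set $A$ as the section set $\Pi_f(A)$, count sections whose isotropy contains each subgroup of the chain, and M\"obius-invert along that chain. Two slips in your sketch are worth flagging. First, the isotropy groups $G_\sigma$ range over \emph{all} $H_j$ with $0\le j\le\ell$, not only $k\le i\le\ell$; your stratification by ``the smallest $i$ with $\sigma$ $H_i$-periodic'' only captures the $j\ge k$ stratum, though you do recover the $j<k$ cases in the next sentence. Second, for isotropy $H_i$ with $i<k$, the components of $A$ that can appear as values of $\sigma$ are $X_{k,s}$ with $i\le s\le k$, not $s\le i$. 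The count you hedge as ``a power of $\sum_s m_s p^{k-s}$ (possibly restricted to the relevant range of $s$)'' is precisely the paper's Corollary \ref{CorGK2}: $\sharp\{\sigma\mid G_\sigma\ge H_j\}$ equals $(m_k)^{p^{\ell-j}}$ for $j\ge k$ and $\bigl(\sum_{s=j}^{k}m_sp^{k-s}\bigr)^{p^{\ell-k}}$ for $j<k$, and the exponent is $r(H_j)=|G:H_jH_k|$, not a fixed power. The $p$-adic congruence you supply for integrality is correct but not needed: once the formula is proved for $m_i\ge 0$ by orbit counting, the coefficients are integers automatically, and the polynomial-map fact carries this to all $m_i\in\Z$.
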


\section{Ideals of $\Omega$ as a sequence}
An ideal of a Tambara functor is defined in \cite{N_Ideal} as follows.
\begin{dfn}\label{DefIdeal}
Let $T$ be a Tambara functor. An {\it ideal} $\I$ of $T$ is a family of ideals $\{\I(X)\subseteq T(X)\}_{X\in\Ob(\Gs)}$ satisfying
\begin{enumerate}
\item[{\rm (i)}] $f^{\ast}(\I(Y))\subseteq \I(X)$,
\item[{\rm (ii)}] $f_+(\I(X))\subseteq \I(Y)$,
\item[{\rm (iii)}] $f_{\bullet}(\I(X))\subseteq f_{\bullet}(0)+\I(Y)$
\end{enumerate}
for any $f\in\Gs(X,Y)$.
\end{dfn}
These conditions also imply
\[ \I(X_1\amalg X_2)\cong\I(X_1)\times\I(X_2) \]
for any $X_1,X_2\in\Ob(\Gs)$. From this, an ideal of $T$ is determined by a family
\[ \{ I_H=\I(G/H) \}_{H\le G} \]
of ideals $I_H\subseteq T(G/H)$, indexed by the set of subgroups of $G$, as follows.
\begin{prop}\label{PropIC}
Let $T$ be a Tambara functor on $G$. To give an ideal $\I$ of $T$ is equivalent to give a family $\{ I_H\}_{H\le G}$ of ideals $I_H\subseteq T(G/H)$ satisfying the conditions
\begin{itemize}
\item[{\rm (i)}] $\res^H_K(I_H)\subseteq I_K$
\item[{\rm (ii)}] $\ind^H_K(I_K)\subseteq I_H$
\item[{\rm (iii)}] $\jnd^H_K(I_K)\subseteq I_H$
\item[{\rm (iv)}] $c_{g,H}(I_H)\subseteq I_{H^{\! g}}$
\end{itemize}
for any $K\le H\le G$ and $g\in G$.
\end{prop}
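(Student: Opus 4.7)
The forward direction is immediate: given an ideal $\I$ of $T$, set $I_H := \I(G/H)$. Specializing the three conditions of Definition \ref{DefIdeal} to the projection $\pro^H_K\colon G/K\to G/H$ (for $K\le H\le G$) yields (i), (ii), (iii) of the proposition, and applying Definition \ref{DefIdeal}(i) to the conjugation $c_g\colon G/H^{\!g}\to G/H$ yields (iv).

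For the converse, I would reconstruct $\I(X)$ for each $X\in\Ob(\Gs)$ from the data $\{I_H\}_{H\le G}$. Choose an orbit decomposition $X\cong\coprod_i G/H_i$ and set
\[ \I(X) := \prod_i I_{H_i} \]
under the additivity ring isomorphism $T(X)\cong\prod_i T(G/H_i)$. This is an ideal of the product ring; to see it is independent of the chosen decomposition, note that the only ambiguity is the identification of isomorphic transitive $G$-sets $G/H\cong G/H^{\!g}$ via $c_{g,H}$. Condition (iv), applied with $g$ and with $g^{-1}$, gives $c_{g,H}(I_H)=I_{H^{\!g}}$, so the resulting ideal of $T(X)$ is intrinsic.

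It then remains to verify the three axioms of Definition \ref{DefIdeal} for an arbitrary $f\colon X\to Y$ in $\Gs$. By Remark \ref{Remresindjnd}, every structure morphism of $T$ is, via additivity, a composite of those along projections $\pro^H_K$ and conjugations $c_g$. Decomposing $Y=\coprod_j Y_j$ into orbits and $X=\coprod_j f^{-1}(Y_j)$, the maps $f^\ast$ and $f_+$ split as products of maps between orbit-wise factors, each of which is a composite of $\res^H_K$, $\ind^H_K$, $c_{g,H}$; hence axioms (i), (ii) of Definition \ref{DefIdeal} follow from conditions (i), (ii), (iv) of the proposition.

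The main obstacle is axiom (iii) for $f_\bullet$. Here $f_\bullet$ is not additive in its source, but the explicit description in Example \ref{ExTam} (generalized by the exponential-diagram distributive law of Definition \ref{DefTamFtr}(3)) shows that whenever the target $Y$ is a single orbit $G/K$ and $f^{-1}(Y)=\coprod_i G/L_i$ with $f$ restricting to conjugates of $\pro^K_{L_i}$, one has $f_\bullet((a_i)_i) = \prod_i \jnd^K_{L_i}(a_i)$ as a product in the ring $T(G/K)$. Condition (iii) of the proposition yields $\jnd^K_{L_i}(a_i)\in I_K$ and $\jnd^K_{L_i}(0)\in I_K$ for $a_i\in I_{L_i}$, and the telescoping identity
\[ \prod_i u_i - \prod_i v_i = \sum_j\Bigl(\prod_{i<j} u_i\Bigr)(u_j-v_j)\Bigl(\prod_{i>j} v_i\Bigr) \]
then writes $f_\bullet(a)-f_\bullet(0)$ as a sum of elements of $I_K$, establishing axiom (iii) and completing the reduction to the four listed conditions.
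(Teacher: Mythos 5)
Your approach is the standard one (specialize the definition to projections and conjugations for the forward direction, rebuild $\I$ orbit-wise for the converse), and the converse direction is handled well --- in particular the telescoping identity for $f_\bullet(a)-f_\bullet(0)$ is exactly the right device, and you correctly rely only on $0\in I_{L_i}$ rather than on any vanishing of $f_\bullet(0)$. The forward direction, however, has a gap you should fill. Definition~\ref{DefIdeal}(iii) applied to $\pro^H_K$ yields $\jnd^H_K(I_K)\subseteq \jnd^H_K(0)+I_H$, not the cleaner $\jnd^H_K(I_K)\subseteq I_H$ asserted in Proposition~\ref{PropIC}(iii). The two statements coincide only because $\jnd^H_K(0)=0$, and that is \emph{not} a formal consequence of the Tambara axioms: it holds because $\pro^H_K$ is surjective, so in $\Omega_G$ one has $\Pi_{\pro^H_K}(\emptyset)=\emptyset$, and the unique Tambara morphism $\Omega_G\to T$ (which is a levelwise ring homomorphism, hence preserves $0$, and intertwines $\jnd$) carries this to any $T$; alternatively one can run the exponential diagram for the pair $(\pro^H_K,\,\emptyset\hookrightarrow G/K)$ directly. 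Since your converse direction is scrupulous about the $f_\bullet(0)$ correction for a general (possibly non-surjective) $f$, it is a little incongruous to drop it silently going the other way. You might also add one line justifying the multiplicativity $f_\bullet((a_i)_i)=\prod_i (f_i)_\bullet(a_i)$ over an orbit decomposition of the source; it is a standard consequence of exponential diagrams and additivity, but it is not literally one of the displayed axioms in Definition~\ref{DefTamFtr}. With those two sentences supplied, the argument is complete.
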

\begin{proof}
This is straightforward (cf. Corollary 2.2 in \cite{N_Specp}).
\end{proof}

In our particular case, an ideal of $\Omega_G$ is written as follows.

\begin{cor}\label{CorIk}
Let $G$ be a cyclic $p$-group of $p$-rank $r$. An ideal $\I$ of $\Omega_G$ is given by a sequence
\[ \I=[I_0,\ldots,I_r] \]
of ideals $I_k\subseteq R_k$, satisfying the following condition $\Ical(k)$ for each $1\le k\le r$.
\medskip

\begin{itemize}
\item[\fbox{$\Ical(k)$}]
$\ \ \ind^k_{k-1}(I_{k-1})\subseteq I_k,\ \ 
\res^k_{k-1}(I_k)\subseteq I_{k-1},\ \ 
\jnd^k_{k-1}(I_{k-1})\subseteq I_k$.
\end{itemize}
\end{cor}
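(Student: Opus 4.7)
The plan is to deduce this as a direct specialization of Proposition \ref{PropIC} to the cyclic $p$-group setting, using two simplifying features: the subgroup lattice is a chain, and conjugation is trivial.

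First I would invoke Proposition \ref{PropIC} to replace the global data of an ideal $\I$ by the family $\{I_H = \I(G/H)\}_{H \le G}$ satisfying conditions (i)--(iv) there. Since $G$ is abelian, for every $g \in G$ and every $H \le G$ we have $H^{\!g} = H$ and $c_g\colon G/H \to G/H$ is the identity $G$-map; consequently $c_{g,H} = \mathrm{id}$ on $T(G/H)$, so condition (iv) is vacuous. Thus the data of $\I$ is exactly a family $\{I_H\}_{H \le G}$ satisfying (i)--(iii) for every pair $K \le H \le G$. Since the subgroups of $G$ form the chain $H_0 < H_1 < \cdots < H_r$, this family is nothing but a sequence $[I_0, \ldots, I_r]$ of ideals $I_k \subseteq R_k$.

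Next I would reduce the conditions (i)--(iii), required a priori for \emph{all} pairs $H_i \le H_k$, to the adjacent case $i = k-1$, i.e.\ to $\Ical(k)$. This rests on the transitivity of the structure morphisms along compositions of natural projections: for any $i \le k$,
\[
\res^k_i = \res^{i+1}_i \circ \res^{i+2}_{i+1} \circ \cdots \circ \res^k_{k-1},\quad
\ind^k_i = \ind^k_{k-1} \circ \cdots \circ \ind^{i+1}_i,\quad
\jnd^k_i = \jnd^k_{k-1} \circ \cdots \circ \jnd^{i+1}_i,
\]
which follows from functoriality of $T^{\ast}, T_+, T_{\bullet}$ applied to $\pro^k_i = \pro^{i+1}_i \circ \cdots \circ \pro^k_{k-1}$. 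Therefore, assuming $\Ical(k)$ holds for each $1 \le k \le r$, an iterated application to $I_i \subseteq I_{i+1} \subseteq \cdots \subseteq I_k$ (respectively in the reverse direction for $\res$) yields $\ind^k_i(I_i) \subseteq I_k$, $\res^k_i(I_k) \subseteq I_i$, and $\jnd^k_i(I_i) \subseteq I_k$. The converse implication (from all-pair conditions to $\Ical(k)$) is immediate by specializing to $i = k-1$.

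Combining these two reductions establishes the bijection between ideals $\I$ of $\Omega_G$ and sequences $[I_0, \ldots, I_r]$ satisfying $\Ical(k)$ for all $1 \le k \le r$. The proof involves no genuine obstacle; the only point that requires a remark rather than a computation is the vanishing of the conjugation condition (iv), which is special to the abelian case.
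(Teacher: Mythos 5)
Your proposal is correct and is exactly the specialization the paper intends: Proposition \ref{PropIC} plus the observations that conjugation is trivial on an abelian group (killing condition (iv)) and that the subgroup chain reduces the all-pairs conditions to the adjacent ones via the factorizations $\res^k_i = \res^{i+1}_i\circ\cdots\circ\res^k_{k-1}$ (and likewise for $\ind$, $\jnd$). The paper offers no separate proof of the Corollary, treating it as an immediate consequence, and your argument fills in precisely the routine details.
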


\bigskip

Remark that for ideals $\I=[I_0,\ldots,I_r]$ and $\J=[J_0,\ldots,J_r]$, we have $\I\subseteq\J$ if and only if $I_k\subseteq J_k$ holds for any $0\le k\le r$.

In the following, we will simply say
\lq\lq $[I_0,\ldots,I_k]$ is an ideal of $\Omega_{H_k}$"
to mean that $[I_0,\ldots I_k]$ satisfies $\Ical(i)$ for any $1\le i\le k$.
This makes sense by virtue of Remark \ref{RemSeq}.
The restriction of an ideal $\I=[I_0\ldots,I_k]$ of $\Omega_{H_k}$ onto $H_i$ is given by $[I_0\ldots,I_i]$, for each $0\le i\le k$. We denote this by $\I|_{H_i}$. Obviously, restriction preserves inclusions of ideals.

On the contrary, Corollary \ref{CorIk} also enables us to extend an ideal $[I_0,\ldots,I_{k-1}]$ of $\Omega_{H_{k-1}}$ to an ideal $[I_0\ldots,I_k]$ of $\Omega_{H_k}$, by adding an ideal $I_k\subseteq R_k$ satisfying $\Ical(k)$.
Among the possible extensions of $[I_0\ldots,I_{k-1}]$, the largest and the smallest are given as follows. If an ideal $\I\ppr\subseteq\Omega_{H_k}$ satisfies $\I\ppr |_{H_i}=\I$ for a given ideal $\I\subseteq\Omega_{H_i}$, then we say that $\I\ppr$ {\it is an ideal over} $\I$.
\begin{prop}\label{PropLandS}
For a fixed $1\le k\le r$, suppose we are given an ideal $\I=[I_0\ldots,I_{k-1}]$ of $\Omega_{H_{k-1}}$.
\begin{enumerate}
\item If we define $L_k(I_{k-1})=L(I_{k-1})\subseteq R_k$ by
\[ L(I_{k-1})=(\res^k_{k-1})^{-1}(I_{k-1}), \]
then $[I_0\ldots,I_{k-1},L(I_{k-1})]$ is the largest ideal of $\Omega_{H_k}$ over $[I_0\ldots,I_{k-1}]$. 
\item If we define $S_k(I_{k-1})=S(I_{k-1})\subseteq R_k$ to be the ideal of $R_k$ generated by
\[ \{\ind^k_{k-1}(\alpha)\mid\alpha\in I_{k-1}\}\ \ \text{and}\ \ \{\jnd^k_{k-1}(\alpha)\mid\alpha\in I_{k-1}\}, \]
shortly,
\[ S(I_{k-1})=(\ind^k_{k-1}(I_{k-1}))+(\jnd^k_{k-1}(I_{k-1})), \]
then $[I_0\ldots,I_{k-1},S(I_{k-1})]$ is the smallest ideal of $\Omega_{H_k}$ over $[I_0\ldots,I_{k-1}]$. 
\end{enumerate}
\end{prop}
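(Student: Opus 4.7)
The proposition splits into two parallel parts which both reduce to establishing two basic identities on the structure maps, namely
\[ \res^k_{k-1}\circ\ind^k_{k-1}=p\cdot\id_{R_{k-1}}\qquad\text{and}\qquad\res^k_{k-1}\circ\jnd^k_{k-1}=(-)^p. \]
The first identity follows immediately from the basis formulas: by Remark \ref{Remind}, $\ind^k_{k-1}(X_{k-1,i})=X_{k,i}$ for $0\le i\le k-1$, and by Corollary \ref{CorRk}(2), $\res^k_{k-1}(X_{k,i})=p\,X_{k-1,i}$ for $i\le k-1$, so the composite acts as $p$ on each basis element and hence on all of $R_{k-1}$ by additivity. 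The second identity is a direct consequence of the semi-Mackey property of $T^\mu$ applied to the pullback $G/H_{k-1}\times_{G/H_k}G/H_{k-1}=\coprod_p G/H_{k-1}$ (equation $(\ref{Fibij})$): the restriction along the projection of the multiplicative transfer along the same projection becomes the multiplicative transfer along the fold of the restriction along the fold, i.e.\ the $p$-fold product, giving $x^p$. One may also verify this directly from Corollary \ref{Corjnd} by a straightforward substitution.

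For part (1), the set $L(I_{k-1})=(\res^k_{k-1})^{-1}(I_{k-1})$ is an ideal of $R_k$ because $\res^k_{k-1}$ is a ring homomorphism. I then verify $\Ical(k)$: the inclusion $\res^k_{k-1}(L(I_{k-1}))\subseteq I_{k-1}$ is by definition; for any $\alpha\in I_{k-1}$, the identities above give $\res^k_{k-1}(\ind^k_{k-1}(\alpha))=p\alpha\in I_{k-1}$ and $\res^k_{k-1}(\jnd^k_{k-1}(\alpha))=\alpha^p\in I_{k-1}$, so $\ind^k_{k-1}(I_{k-1})$ and $\jnd^k_{k-1}(I_{k-1})$ are both contained in $L(I_{k-1})$. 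Maximality is automatic: any ideal $[I_0,\ldots,I_k]$ over $\I$ satisfies $\res^k_{k-1}(I_k)\subseteq I_{k-1}$ by $\Ical(k)$, which is precisely the condition $I_k\subseteq L(I_{k-1})$.

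For part (2), $S(I_{k-1})$ is an ideal of $R_k$ by construction, and conditions $\ind^k_{k-1}(I_{k-1})\subseteq S(I_{k-1})$ and $\jnd^k_{k-1}(I_{k-1})\subseteq S(I_{k-1})$ hold by construction as well. The only remaining condition is $\res^k_{k-1}(S(I_{k-1}))\subseteq I_{k-1}$; a general element of $S(I_{k-1})$ has the form $\sum_i a_i\,\ind^k_{k-1}(\alpha_i)+\sum_j b_j\,\jnd^k_{k-1}(\beta_j)$ with $\alpha_i,\beta_j\in I_{k-1}$ and $a_i,b_j\in R_k$, and applying the ring homomorphism $\res^k_{k-1}$ together with the two identities yields
\[ \sum_i \res^k_{k-1}(a_i)\cdot p\alpha_i+\sum_j \res^k_{k-1}(b_j)\cdot\beta_j^p\in I_{k-1}. \]
Minimality is then clear: if $[I_0,\ldots,I_k]$ is any ideal over $\I$, then $I_k$ must contain both $\ind^k_{k-1}(I_{k-1})$ and $\jnd^k_{k-1}(I_{k-1})$, hence the ideal $S(I_{k-1})$ they generate.

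The only nontrivial step is the identity $\res^k_{k-1}\circ\jnd^k_{k-1}=(-)^p$; once it is in hand, both parts are essentially formal consequences of $\Ical(k)$ and the ring-homomorphism property of $\res^k_{k-1}$.
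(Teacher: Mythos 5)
Your proof is correct and follows essentially the same route as the paper: both establish the identities $\res^k_{k-1}\circ\ind^k_{k-1}=p\cdot\id$ and $\res^k_{k-1}\circ\jnd^k_{k-1}=(-)^p$ (you from the basis formulas and the Mackey pullback, the paper from the pullback diagram for both), then check $\Ical(k)$ for $L$ and $S$ and observe that maximality and minimality are immediate from the definitions. The only cosmetic difference is that you verify the first identity basis-by-basis while the paper reads it off the same pullback used for the second.
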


\begin{dfn}\label{DefLandS}
For an ideal $\I=[I_0,\ldots,I_{k-1}]$ of $\Omega_{H_{k-1}}$, define ideals $\Lcal_k(\I)=\Lcal\I$ and $\Scal_k(\I)=\Scal\I$ of $\Omega_{H_k}$ by
\begin{eqnarray*}
\Lcal\I&=&[I_0,\ldots,I_{k-1},L(I_{k-1})],\\
\Scal\I&=&[I_0,\ldots,I_{k-1},S(I_{k-1})],
\end{eqnarray*}
where $L(I_{k-1})$ and $S(I_{k-1})$ are those in Proposition \ref{PropLandS}.
Additionally, we denote the $n$-times iterations of $L,S,\Lcal,\Scal$ by $L^n,S^n$ and $\Lcal^n,\Scal^n$. For example, an ideal $\I=[I_0,\ldots,I_k]\subseteq\Omega_{H_k}$ yields $\Lcal^n\I=[I_0,\ldots,I_k,L(I_k),\ldots,L^n(I_k)]\ \subseteq\Omega_{H_{k+n}}$.
\end{dfn}

\begin{proof}[Proof of Proposition \ref{PropLandS}]
\noindent{\rm (1)}
To show $\Lcal\I$ is an ideal of $\Omega_{H_k}$, it suffices to confirm $\Ical(k)$ is satisfied.
By definition, $\res^k_{k-1}(L(I_{k-1}))\subseteq I_{k-1}$ is obvious.
In addition, by the existence of a pullback diagram
\[
\xy
(-13,6)*+{\underset{p}{\amalg}\ \! G/H_{k-1}}="0";
(-5,7)*+{}="1";
(12,7)*+{G/H_{k-1}}="2";
(-12,5)*+{}="3";
(-12,-7)*+{G/H_{k-1}}="4";
(12,-7)*+{G/H_k}="6";
(0,0)*+{\square}="8";
{\ar^(0.3){\nabla} "1";"2"};
{\ar_{} "3";"4"};
{\ar^{\pro^k_{k-1}} "2";"6"};
{\ar_(0.55){\pro^k_{k-1}} "4";"6"};
\endxy
\]
where $\nabla\colon\underset{p}{\amalg}G/H_{k-1}\rightarrow G/H_{k-1}$ is the folding map, we have
\begin{eqnarray*}
\res^k_{k-1}\circ\ind^k_{k-1}(\alpha)&=&p\alpha,\\
\res^k_{k-1}\circ\jnd^k_{k-1}(\alpha)&=&\alpha^p
\end{eqnarray*}
for any $\alpha\in R_k$.
Thus if $\alpha\in I_{k-1}$, then
\begin{eqnarray*}
&\ind^k_{k-1}(\alpha)\in(\res^k_{k-1})^{-1}(I_{k-1}),&\\
&\jnd^k_{k-1}(\alpha)\in(\res^k_{k-1})^{-1}(I_{k-1})\, &
\end{eqnarray*}
holds. This shows $\ind^k_{k-1}(I_{k-1})\subseteq L(I_{k-1})$ and $\jnd^k_{k-1}(I_{k-1})\subseteq L(I_{k-1})$, and thus $\Lcal\I$ is an ideal of $\Omega_{H_k}$. Moreover, since any ideal $[I_0,\ldots,I_{k-1},J_k]$ of $\Omega_{H_k}$ should satisfy $\res^k_{k-1}(J_k)\subseteq I_{k-1}$, obviously $\Lcal\I$ is the largest.

\bigskip

\noindent{\rm (2)}
It suffices to confirm $\Scal\I$ satisfies $\Ical(k)$.
Since $\res^k_{k-1}$ is a ring homomorphism, $\res^k_{k-1}(S(I_{k-1}))\subseteq I_{k-1}$ follows from the fact that any $\alpha\in I_{k-1}$ satisfies
\begin{eqnarray*}
\res^k_{k-1}\circ\ind^k_{k-1}(\alpha)=p\alpha&\in& I_{k-1},\\
\res^k_{k-1}\circ\jnd^k_{k-1}(\alpha)=\alpha^p&\in& I_{k-1}.
\end{eqnarray*}
The other conditions 
\begin{eqnarray*}
&\ind^k_{k-1}(I_{k-1})\subseteq S(I_{k-1}),&\\
&\jnd^k_{k-1}(I_{k-1})\subseteq S(I_{k-1})\, &
\end{eqnarray*}
are obviously satisfied by the definition of $S(I_{k-1})$.
Thus $\Scal\I$ is an ideal of $\Omega_{H_k}$.
Moreover, since any ideal $[I_0,\ldots,I_{k-1},J_k]$ of $\Omega_{H_k}$ should satisfy $\ind^k_{k-1}(I_{k-1})\subseteq J_k$ and $\jnd^k_{k-1}(I_{k-1})\subseteq J_k$, obviously $\Scal\I$ is the smallest.
\end{proof}

\section{Inductive criterion of primeness}

In \cite{N_Ideal}, a prime ideal of a Tambara functor is defined as follows.

\begin{dfn}\label{DefPrime}
Let $G$ be an arbitrary finite group, and let $T$ be a Tambara functor on $G$. An ideal $\I\subsetneq T$ is prime if and only if the following two conditions become equivalent, for any transitive $X,Y\in\Ob(\Gs)$ and any $a\in T(X)$, $b\in T(Y)$.
\begin{enumerate}
\item For any $C\in\Ob(\Gs)$ and for any pair of diagrams
\[ C\overset{v}{\leftarrow}D\overset{w}{\rightarrow}X,\quad C\overset{v\ppr}{\leftarrow}D\ppr\overset{w\ppr}{\rightarrow}Y \]
in $\Gs$, $\I(C)$ satisfies
\[ (v_{\bullet}w^{\ast}(a)-v_{\bullet}(0))\cdot(v\ppr_{\bullet}w^{\prime\ast}(b)-v\ppr_{\bullet}(0))\in\I(C). \]
\item $a\in\I(X)$ or $b\in\I(Y)$.
\end{enumerate}
Remark that {\rm (2)} always implies {\rm (1)}.
\end{dfn}

By a straightforward argument, we may assume $C,D,D\ppr$ are transitive, and this condition can be also written as follows.
\begin{prop}\label{PropPC}
Let $G$ be an arbitrary finite group, and let $\I=\{ I_H\}_{H\le G}$ be an ideal of $T$.
Then $\I$ is prime if and only if the following condition is satisfied for any $H,H\ppr\le G$ and any $a\in T(G/H),b\in T(G/H\ppr)$.
\begin{itemize}
\item[$(\ast)$]\label{PrimeCond}
If $\ (\jnd^L_{K^{\! g}}\circ c_{g,K}\circ\res^H_K(a))\cdot(\jnd^L_{K^{\prime g\ppr}}\circ c_{g\ppr,K\ppr}\circ\res^{H\ppr}_{K\ppr}(a))\,\in I_L$
is satisfied for any $L,K,K\ppr\le G$ and $g,g\ppr\in G$ satisfying $L\ge K^{\! g},\, K\le H,\, L\ge K^{\prime g\ppr},\, K\ppr\le H\ppr$, then
\[ a\in I_H \quad\text{or}\quad b\in I_{H\ppr} \]
holds.
\end{itemize}
\end{prop}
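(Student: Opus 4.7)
The plan is to show that for each pair $(a, b) \in T(G/H) \times T(G/H\ppr)$, the hypothesis of Definition \ref{DefPrime}(1)---that $(v_\bullet w^{\ast}(a) - v_\bullet(0))\cdot(v\ppr_\bullet w^{\prime\ast}(b) - v\ppr_\bullet(0)) \in \I(C)$ for every choice of $C, D, D\ppr$ and maps $v, w, v\ppr, w\ppr$---is equivalent to the hypothesis of $(\ast)$. Since the conclusions of the two implications ``$a \in I_H$ or $b \in I_{H\ppr}$'' coincide, an equivalence of hypotheses at each $(a,b)$ establishes the proposition.

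The first step would be to reduce the quantifiers over $C, D, D\ppr$ to transitive ones. Decomposing each into $G$-orbits $C = \coprod_i C_i$, $D = \coprod_j D_j$, $D\ppr = \coprod_k D\ppr_k$, the additivity of $T$ and of $\I$ yields $\I(C) \cong \prod_i \I(C_i)$, reducing membership in $\I(C)$ to componentwise statements. The standard formula $T_\bullet(v_1 \amalg v_2)(\alpha_1, \alpha_2) = T_\bullet(v_1)(\alpha_1) \cdot T_\bullet(v_2)(\alpha_2)$---obtained by factoring through the folding map $\nabla \colon C_i \amalg C_i \to C_i$, whose multiplicative transfer is the product in $T(C_i)$---expands $v_\bullet$ and $v\ppr_\bullet$ orbit by orbit. $G$-equivariance forces each transitive $D_j$ to land in a single orbit of $C$ and a single orbit of $X = G/H$, so the bookkeeping reduces (1) to the case of transitive $C = G/L$, $D = G/A$, $D\ppr = G/A\ppr$.

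Next I parameterize these transitive data group-theoretically. A $G$-map $w \colon G/A \to G/H$ is determined by the image of the trivial coset; writing this image as $g^{-1}H$ forces $A \le {}^gH$, equivalently $A = K^g$ for some $K \le H$, giving $w = \pro^H_K \circ c_g$ and hence $w^{\ast} = c_{g, K} \circ \res^H_K$. Likewise $v \colon G/K^g \to G/L$ requires $K^g \le L$ and equals $\pro^L_{K^g}$, so $v_\bullet = \jnd^L_{K^g}$. Substituting, $v_\bullet w^{\ast}(a) = \jnd^L_{K^g} \circ c_{g, K} \circ \res^H_K(a)$, and symmetrically for the $b$-side, matching the expression in $(\ast)$ up to the correction idempotents $v_\bullet(0) = \jnd^L_{K^g}(0)$ and $v\ppr_\bullet(0) = \jnd^L_{K^{\prime g\ppr}}(0)$.

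The key observation---and the one point requiring care, since multiplicative transfers do not preserve $0$---is that these correction idempotents lie in $I_L$ automatically: $0 \in I_{K^g}$ trivially, and since $\I$ is an ideal, Proposition \ref{PropIC}(iii) gives $\jnd^L_{K^g}(0) \in I_L$, and similarly for $\jnd^L_{K^{\prime g\ppr}}(0)$. Expanding
\[
(v_\bullet w^{\ast}(a) - v_\bullet(0))\cdot(v\ppr_\bullet w^{\prime\ast}(b) - v\ppr_\bullet(0))
= v_\bullet w^{\ast}(a)\cdot v\ppr_\bullet w^{\prime\ast}(b) \;+\; (\text{three terms, each in }I_L),
\]
one sees that the membership of the left-hand side in $I_L$ is equivalent to the bare product form of $(\ast)$. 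This observation turns the whole argument into the promised ``straightforward'' reduction: once the orbit decomposition and parameterization are in place, the universal quantifiers of (1) and of $(\ast)$ translate directly into one another, and the proposition follows.
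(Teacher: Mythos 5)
Your approach is the intended one: the paper dispenses with this proposition via the remark ``by a straightforward argument we may assume $C,D,D'$ are transitive,'' and your orbit decomposition, parameterization of transitive spans, and treatment of the correction terms fill in exactly that standard argument.

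Two small points deserve attention. First, a notational slip: if $w(eA)=g^{-1}H$, then well-definedness forces ${}^gA\le H$, i.e.\ $A\le H^g$, not $A\le{}^gH$; the conclusion ``$A=K^g$ for some $K\le H$'' is what you want and is correct, but the intermediate inclusion is written with the wrong side of the conjugation. Second, and more substantively, after fixing $D=G/K^g$ you assert that $v\colon G/K^g\to G/L$ ``equals $\pro^L_{K^g}$,'' but a general $G$-map to $G/L$ also carries a conjugation, $v=\pro^L_{(K^g)^h}\circ c_h$ with $(K^g)^h\le L$. This is not actually a gap in the end, because $(c_h)_\bullet\circ c_{g,K}=c_{gh,K}$ and $(K^g)^h=K^{gh}$, so the extra conjugation is absorbed by replacing $g$ with $gh$ in the universal quantifier of $(\ast)$; alternatively one normalizes first so that $v$ is a projection and lets $w$ carry the conjugation. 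Either way you should say so explicitly rather than claiming $v$ is literally a projection. Finally, note that your invocation of Proposition \ref{PropIC}(iii) for the correction terms is valid but heavier than needed: once $C$ and $D$ are transitive and $D\ne\emptyset$, every $G$-map $v\colon D\to C$ is surjective, and for surjective $v$ one has $v_\bullet(0)=0$ outright (the $D=\emptyset$ orbit contributes nothing since then the bracketed factor is identically zero). With these small repairs the argument is complete and coincides with the paper's intended proof.
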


In our case, this can be reduced to the following condition.
\begin{cor}\label{CorPC}
Let $G$ be a cyclic $p$-group of $p$-rank $r\ge 0$.
An ideal $\I=[I_0,\ldots,I_r]$ of $\Omega_G$ is prime if and only if it satisfies the following condition $\Pcal(k,\ell)$ for each $0\le \ell\le k\le r$.
\begin{itemize}
\item[\fbox{$\Pcal(k,\ell)$}] For any $a\in R_k$ and $b\in R_{\ell}$,
\begin{eqnarray*}
&(\jnd^m_i\circ\res^k_i(a))\cdot(\jnd^m_j\circ\res^{\ell}_j(b))\in I_m&\\
&(0\le {}^{\forall}i\le k,\ 0\le {}^{\forall}j\le\ell,\ m=\max(i,j))&
\end{eqnarray*}
implies
\[ a\in I_k \quad\text{or}\quad b\in I_{\ell}. \]
\end{itemize}
\end{cor}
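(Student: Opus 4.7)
The plan is to obtain Corollary \ref{CorPC} as a direct specialization of Proposition \ref{PropPC} to the cyclic $p$-group setting, exploiting two structural features: the subgroup lattice is a chain $H_0<H_1<\cdots<H_r$, and $G$ is abelian.

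First I would unwind condition $(\ast)$ of Proposition \ref{PropPC} in this case. By symmetry in $a,b$ I may assume $\ell\le k$, so that the transitive $G$-sets are $G/H_k$ and $G/H_\ell$. Any $K\le H_k$ is of the form $H_i$ with $0\le i\le k$, and similarly $K\ppr=H_j$ with $0\le j\le\ell$. Since $G$ is abelian, $K^{\! g}=K$ and $K^{\prime g\ppr}=K\ppr$ for every $g,g\ppr\in G$, so the subgroups $L$ satisfying $L\ge K^{\! g}$ and $L\ge K^{\prime g\ppr}$ are exactly the $H_m$ with $m\ge\max(i,j)$. Moreover, each $R_i$ carries the trivial $G$-action (as observed at the start of Section 2), so $c_{g,K}$ is the identity on $R_i$. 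With these substitutions, the hypothesis of $(\ast)$ collapses to
\[ (\jnd^m_i\circ\res^k_i(a))\cdot(\jnd^m_j\circ\res^{\ell}_j(b))\in I_m \]
for all $0\le i\le k$, $0\le j\le\ell$, and all $m\ge\max(i,j)$.

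Next I would verify that this family of conditions is equivalent to the apparently weaker one appearing in $\Pcal(k,\ell)$, where $m$ is restricted to $m=\max(i,j)$. The forward direction is immediate. For the reverse, fix $i,j$ and $m\ppr\ge m:=\max(i,j)$; since $\jnd^{m\ppr}_m$ is a multiplicative homomorphism satisfying $\jnd^{m\ppr}_m\circ\jnd^m_i=\jnd^{m\ppr}_i$ by functoriality, and since iteration of condition $\Ical$ from Corollary \ref{CorIk} yields $\jnd^{m\ppr}_m(I_m)\subseteq I_{m\ppr}$, applying $\jnd^{m\ppr}_m$ to the relation in $I_m$ produces the desired relation at level $m\ppr$. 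Thus the restricted and unrestricted families coincide, and $\Pcal(k,\ell)$ characterizes primeness exactly as claimed.

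The main obstacle I anticipate is purely bookkeeping: carefully unwrapping the quantifiers in Proposition \ref{PropPC} and matching the degenerate forms of the conjugation and restriction maps to the clean statement of $\Pcal(k,\ell)$. The only non-tautological ingredient is the reduction from arbitrary $m\ge\max(i,j)$ to $m=\max(i,j)$, which rests on the multiplicativity of $\jnd$ combined with the containment $\jnd(I)\subseteq I$ coming from the ideal axioms; everything else follows from the combinatorics of the chain of subgroups and the commutativity of $G$.
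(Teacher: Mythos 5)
Your proposal is correct and follows essentially the same route as the paper's proof: specialize Proposition \ref{PropPC} by using that $G$ is abelian (so conjugations are trivial and subgroups form a chain), invoke symmetry to take $\ell\le k$, and reduce from arbitrary $m\ge\max(i,j)$ to $m=\max(i,j)$ via the ideal condition $\jnd^{m\ppr}_m(I_m)\subseteq I_{m\ppr}$ together with multiplicativity and functoriality of $\jnd$. Your version spells out the last reduction in a bit more detail than the paper, which simply cites (iii) of Proposition \ref{PropIC}, but the content is identical.
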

\begin{proof}
For any $0\le k,\ell\le r$ and any $a\in R_k,b\in R_{\ell}$, the condition $(\ast)$ in Proposition \ref{PropPC} is equivalent to the following.
\begin{itemize}
\item[$(\ast)$] If
\begin{equation}
\label{Eq_TT}
(\jnd^m_i\circ c_g\circ\res^k_i(a))\cdot(\jnd^m_j\circ c_{g\ppr}\circ\res^{\ell}_j(b))\in I_m
\end{equation}
is satisfied for any $g,g\ppr\in G$ and any $m\ge i\le k$,\ $m\ge j\le \ell$, then
\[ a\in I_k\quad\text{or}\quad b\in I_{\ell} \]
holds.
\end{itemize}
Remark that we have $c_g=\id,c_{g\ppr}=\id$. Besides, by {\rm (iii)} of Proposition \ref{PropIC}, assumption $(\ref{Eq_TT})$ is only have to be confirmed for $m=\max(i,j)$. Moreover, by the symmetry in $k$ and $\ell$, we may assume $\ell\le k$.
\end{proof}

Furthermore, this condition can be checked on each $k$-th step, as follows.

\begin{prop}\label{PropEP}
Let $G$ be as above. An ideal $\I=[I_0,\ldots,I_r]$ of $\Omega_G$ is prime if and only if it satisfies the following condition $\Pcal(k)$ for each $0\le k\le r$.
\begin{itemize}
\item[\fbox{$\Pcal(k)$}] For any $0\le i\le k$ and any $a\in L_k(I_{k-1}),b\in L_i(I_{i-1})\setminus I_i$,
\begin{equation}\label{ajnI}
a\cdot\jnd^k_i(b)\in I_k\ \ \Longrightarrow\ \ a\in I_k
\end{equation}
holds.
\end{itemize}
Here, when $k=0$, we define $L_0(I_{-1})$ to be $R_0$. Namely, $\Pcal(0)$ is as follows.
\begin{itemize}
\item[\fbox{$\Pcal(0)$}] For any $a\in R_0$ and $b\in R_0\setminus I_0$,
\[ ab\in I_0\ \ \Longrightarrow\ \ a\in I_0 \]
holds. $($This is saying $I_0\subseteq R_0$ is prime in the ordinary ring-theoretic meaning.$)$

\end{itemize}
\end{prop}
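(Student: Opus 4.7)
The plan is to invoke Corollary \ref{CorPC}, which already rewrites primeness as the family of conditions $\{\Pcal(k,\ell)\}_{0\le\ell\le k\le r}$. The task then reduces to proving the equivalence $\{\Pcal(k)\}_k \Leftrightarrow \{\Pcal(k,\ell)\}_{\ell\le k}$.

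The direction $\{\Pcal(k,\ell)\}\Rightarrow\{\Pcal(k)\}$ is the easier one. Given $a\in L_k(I_{k-1})$ and $b\in L_i(I_{i-1})\setminus I_i$ with $a\cdot\jnd^k_i(b)\in I_k$, I would apply $\Pcal(k,i)$ to $(a,b)$ and verify its many-product hypothesis by cases on $(i',j')$. When $i'<k$, iterating $\res$ downward from $L_k(I_{k-1})$ via condition $\Ical$ gives $\res^k_{i'}(a)\in I_{i'}$; when $j'<i$, the same reasoning produces $\res^i_{j'}(b)\in I_{j'}$; and the remaining case $(i',j')=(k,i)$ is exactly the given relation. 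Since $\Ical$ also forces $\jnd$-transfers to preserve ideals, every product lands in the requisite $I_m$, so $\Pcal(k,i)$ combined with $b\notin I_i$ yields $a\in I_k$.

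For the reverse direction I would induct on $k$, with base case $k=0$ trivial since $\Pcal(0,0)$ and $\Pcal(0)$ both express ring-theoretic primeness of $I_0$. For the inductive step, fix $\ell\le k$ and suppose $a\in R_k$, $b\in R_\ell$ satisfy the $\Pcal(k,\ell)$-hypothesis with $b\notin I_\ell$; the goal is $a\in I_k$. The plan has two moves. First I establish $a\in L_k(I_{k-1})$: when $\ell<k$ this follows by applying the inductive $\Pcal(k-1,\ell)$ to $(\res^k_{k-1}(a),b)$, whose hypothesis is a sub-collection of the given one. Second, letting $i_0$ be the smallest index with $\res^\ell_{i_0}(b)\notin I_{i_0}$ (which exists because $b\notin I_\ell$, and by minimality satisfies $\res^\ell_{i_0}(b)\in L_{i_0}(I_{i_0-1})\setminus I_{i_0}$), the instance $(i,j,m)=(k,i_0,k)$ of the hypothesis yields $a\cdot\jnd^k_{i_0}(\res^\ell_{i_0}(b))\in I_k$, and $\Pcal(k)$ then forces $a\in I_k$.

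The main obstacle is the first move when $\ell=k$: the inductive hypothesis $\Pcal(k-1,k-1)$ applied to $(\res^k_{k-1}(a),\res^k_{k-1}(b))$ gives only a disjunction, and the undesirable sub-case $\res^k_{k-1}(b)\in I_{k-1}$, $\res^k_{k-1}(a)\notin I_{k-1}$ must be ruled out. To eliminate it I would pick the smallest $i_1\le k-1$ with $\res^k_{i_1}(a)\notin I_{i_1}$, use the instance $(i,j,m)=(i_1,k,k)$ to produce $\jnd^k_{i_1}(\res^k_{i_1}(a))\cdot b\in I_k$, and apply $\Pcal(k)$ with the roles of the two arguments swapped: $b\in L_k(I_{k-1})$ plays the first entry and $\res^k_{i_1}(a)\in L_{i_1}(I_{i_1-1})\setminus I_{i_1}$ the second. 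The conclusion $b\in I_k$ contradicts the standing assumption, so $\res^k_{k-1}(a)\in I_{k-1}$ holds after all, and the rest of the argument proceeds exactly as in the case $\ell<k$.
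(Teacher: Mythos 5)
Your proof is correct. The forward implication $\{\Pcal(k,\ell)\}\Rightarrow\{\Pcal(k)\}$ matches the paper's argument. The reverse implication, though, is organized differently: the paper runs a nested double induction, encapsulated in Claim~\ref{ClaimPl}, deducing $\Pcal(k,\ell)$ from $\Qcal(k-1)$, $\Pcal(k)$, and the inner hypothesis $\Pcal(k,\ell-1)$; that inner hypothesis is invoked to prove the paper's step (A), that $b\in L_\ell(I_{\ell-1})$, so that $\Pcal(k)$ can be applied to $b$ itself, and again (as $\Pcal(k,k-1)$) for step (B) on the diagonal $\ell=k$. You replace this with a single induction on $k$: picking the smallest $i_0$ with $\res^\ell_{i_0}(b)\notin I_{i_0}$ and applying $\Pcal(k)$ to $\res^\ell_{i_0}(b)$ rather than to $b$ bypasses (A) and the inner $\ell$-induction entirely, and you clear the diagonal case $\ell=k$ by ruling out the bad disjunct of $\Pcal(k-1,k-1)$ via a contradiction built on the minimal $i_1$ with $\res^k_{i_1}(a)\notin I_{i_1}$, applying $\Pcal(k)$ with the two slots swapped. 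Both routes are sound; yours exploits the total ordering of subgroups more aggressively through minimal-index choices, producing a flatter single-parameter induction at the modest cost of the extra swap argument on the diagonal, whereas the paper's version, by working with $b$ itself after (A), keeps the two halves (A) and (B) symmetric.
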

\begin{proof}
For each $0\le k\le r$, we define condition $\Qcal(k)$ as follows.

\smallskip

\begin{itemize}
\item[\fbox{$\Qcal(k)$}] $\Pcal(k,\ell)$ holds for all $0\le \ell\le k$.
\end{itemize}

\smallskip

It suffices to show that $\Qcal(k)$ holds for any $0\le k\le r$ if and only if $\Pcal(k)$ holds for any $0\le k\le r$.
This follows from:
\begin{claim}\label{ClaimPrimeEquiv}
For any $0\le k\le r$, the following holds.
\begin{enumerate}
\item $\Qcal(k)$ implies $\Pcal(k)$.
\item If $k\ge 1$, then $\Qcal(k-1)$ and $\Pcal(k)$ imply $\Qcal(k)$.
\item $\Qcal(0)$ is equivalent to $\Pcal(0)$.
\end{enumerate}
\end{claim}
In fact if this is shown, then by an induction on $k$, we can easily show that the following are equivalent for each $0\le k\le r$.
\begin{itemize}
\item[-] $\Qcal(k\ppr)$ holds for any $0\le k\ppr\le k$.
\item[-] $\Pcal(k\ppr)$ holds for any $0\le k\ppr\le k$.
\end{itemize}
This proves Proposition \ref{PropEP}.
Thus it remains to show Claim \ref{ClaimPrimeEquiv}.
\begin{proof}[Proof of Claim \ref{ClaimPrimeEquiv}]

\noindent{\rm (3)} When $k=0$, then the condition
\begin{itemize}
\item[\fbox{$\Qcal(0)$}] $\Pcal(0,0)$ holds. Namely, for any $a,b\in R_0$,
\[ ab\in I_0\ \ \Longrightarrow\ \ a\in I_0\ \ \text{or}\ \ b\in I_0 \]
holds.
\end{itemize}
is obviously equivalent to $\Pcal(0)$. 

\smallskip

\noindent{\rm (1)}
Fix $k$, suppose we are given $0\le\ell\le k$ and $a\in L_k(I_{k-1}),b\in L_{\ell}(I_{\ell-1})\setminus I_{\ell}$ satisfying
\begin{equation}\label{assump_remain}
a\cdot\jnd^k_{\ell}(b)\in I_k.
\end{equation}
It suffices to show $a\in I_k$.
Since $\Qcal(k)$ (in particular $\Pcal(k,\ell)$) is assumed, it is enough to confirm that
\begin{equation}\label{condstar}
(\jnd^m_i\circ\res^k_i(a))\cdot(\jnd^m_j\circ\res^{\ell}_j(b))\in I_m
\end{equation}
is satisfied for any $0\le i\le k,0\le j\le \ell$ and $m=\max(i,j)$.
However, when $i<k$ or $j<\ell$, $(\ref{condstar})$ follows from $\res^k_i(a)\in I_i$ and $\res^{\ell}_j(b)\in I_j$, since we have $a\in L_k(I_{k-1})$ and $b\in L_{\ell}(I_{\ell-1})$. 
In the remaining case of $i=k$ and $j=\ell$, 
$(\ref{condstar})$ is also satisfied by the assumption $(\ref{assump_remain})$.

\noindent{\rm (2)} Fix $k\ge 1$, and assume $\Qcal(k-1)$. Under this assumption, we show 
$\Pcal(k)$ implies $\Pcal(k,\ell)$ for any $0\le \ell \le k$.
By an induction on $\ell$, this is reduced to the following.
\begin{claim}\label{ClaimPl}
For any $1\le k\le r$ and $0\le \ell\le k$, we have
\[ \Qcal(k-1),\Pcal(k),\Pcal(k,\ell-1)\ \ \Longrightarrow\ \ \Pcal(k,\ell). \](Here, for $\ell=0$, $\Pcal(k,-1)$ is regarded as an empty condition.)
\end{claim}
We only have to show this claim in the rest.
Suppose $a\in R_k$ and $b\in R_{\ell}$ satisfy
\begin{equation}\label{EqTemp}
(\jnd^m_i\circ\res^k_i(a))\cdot(\jnd^m_j\circ\res^{\ell}_j(b))\in I_m
\end{equation}
for any $0\le i\le k$, $0\le j\le \ell$, $m=\max(i,j)$.
Claim \ref{ClaimPl} will follow immediately, if the following are shown.
\begin{itemize}
\item[{\rm (A)}]
If $a\in R_k\setminus I_k$, then $b\in L_{\ell}(I_{\ell-1})$. (
This is trivial when $\ell=0$, since we have defined as $L_0(I_{-1})=R_0$.)
\item[{\rm (B)}]
If $b\in R_{\ell}\setminus I_{\ell-1}$, then $a\in L_k(I_{k-1})$.
\end{itemize}
In fact, if {\rm (A)} and {\rm (B)} 
are shown, then the above $a$ and $b$ will satisfy
\begin{itemize}
\item[{\rm (i)}] $a\in I_k$ \ or
\item[{\rm (ii)}] $b\in I_{\ell}\,$ \ or
\item[{\rm (iii)}] $a\in L_k(I_{k-1})$ and $b\in L_{\ell}(I_{\ell-1})\setminus I_{\ell}$.
\end{itemize}
In the third case, since $a\cdot\jnd^k_{\ell}(b)\in I_k$ is satisfied by $(\ref{EqTemp})$ for $i=k$ and $j=\ell$, it follows $a\in I_k$ by $\Pcal(k)$.

\bigskip
Thus it remains to show {\rm (A)} and {\rm (B)}.

\noindent{\rm (A)}
By applying $\Pcal(k,\ell-1)$ to
\[ a\in R_k\ \ \ \text{and}\ \ \ \res^{\ell}_{\ell-1}(b)\in R_{\ell-1}, \]
we obtain $\res^{\ell}_{\ell-1}(b)\in I_{\ell-1}$, namely $b\in L_{\ell}(I_{\ell-1})$.

\noindent{\rm (B)}
When $\ell<k$, by applying $\Qcal(k-1)$, in particular $\Pcal(k-1,\ell)$ to
\[ \res^k_{k-1}(a)\in R_{k-1}\ \ \ \text{and}\ \ \ b\in R_{\ell}, \]
we obtain $\res^k_{k-1}(a)\in I_{k-1}$, namely $a\in L_k(I_{k-1})$.

When $\ell=k$, by applying $\Pcal(k,k-1)$ to
\[ b\in R_k\ \ \ \text{and}\ \ \ \res^k_{k-1}(a)\in R_{k-1}, \]
we obtain $\res^k_{k-1}(a)\in I_{k-1}$, namely $a\in L_k(I_{k-1})$.
\end{proof}
\end{proof}

By Proposition \ref{PropEP}, whether an ideal is prime or not can be checked inductively on $k$ using $\Pcal(k)$. This is applied to restrictions and extensions of prime ideals as follows.

\begin{cor}\label{CorPrimeRestr}
For any $0\le i\le k\le r$, if an ideal $\I=[I_0,\ldots,I_k]\subseteq\Omega_{H_k}$ is prime, then its restriction $\I|_{H_i}=[I_0,\ldots,I_i]$ onto $H_i$ is also prime.
\end{cor}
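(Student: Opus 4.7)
The proof is essentially immediate from the inductive characterization of primeness in Proposition \ref{PropEP}. My plan is to observe that the condition $\Pcal(k')$ is purely \emph{local}: its statement only involves the ideals $I_0,\ldots,I_{k'}$ (via $L_{k'}(I_{k'-1})$, $L_j(I_{j-1})$, and $I_{k'}$ itself) together with the structure morphisms $\res^{k'}_{k'-1}$, $\jnd^{k'}_{j}$ among $R_0,\ldots,R_{k'}$. None of these data depend on $I_{k'+1},\ldots,I_k$.

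First, I would note that $\I|_{H_i}=[I_0,\ldots,I_i]$ is indeed an ideal of $\Omega_{H_i}$: it satisfies the conditions $\Ical(j)$ for $1\le j\le i$, since these are already part of the hypothesis that $\I=[I_0,\ldots,I_k]$ is an ideal of $\Omega_{H_k}$ (by Corollary \ref{CorIk}). Next, by Proposition \ref{PropEP} applied to $\I$, the primeness of $\I$ gives us $\Pcal(k')$ for every $0\le k'\le k$, and in particular for every $0\le k'\le i$.

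Finally, I would apply Proposition \ref{PropEP} in the opposite direction to $\I|_{H_i}$: since $\Pcal(k')$ for $0\le k'\le i$ holds (by the locality observation), this proposition guarantees that $\I|_{H_i}$ is prime as an ideal of $\Omega_{H_i}$.

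There is no real obstacle here: the content is entirely packaged in Proposition \ref{PropEP}, and the only thing to verify is that the conditions $\Pcal(k')$ for small $k'$ make sense and have the same meaning regardless of whether we view them inside $\Omega_{H_k}$ or inside $\Omega_{H_i}$. This is ensured by Remark \ref{RemSeq}, which lets us identify the first $i+1$ terms of the sequence representing $\Omega_{H_k}$ with the sequence representing $\Omega_{H_{i}}$.
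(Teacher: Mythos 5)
Your argument is correct and follows exactly the route the paper takes: the paper's own proof simply says this is immediate from Proposition \ref{PropEP}, and your write-up spells out precisely why — the conditions $\Pcal(k')$ for $k'\le i$ depend only on $I_0,\ldots,I_{k'}$ and the structure morphisms among $R_0,\ldots,R_{k'}$, which is exactly what makes the restriction inherit primeness under the identification of Remark \ref{RemSeq}.
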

\begin{proof}
This immediately follows from Proposition \ref{PropEP}.
\end{proof}
\begin{cor}\label{CorPrimePrime}
For $k\ge 1$, let $\I=[I_0,\ldots,I_{k-1}]$ be an ideal of $\Omega_{H_k}$. If $\I$ is prime, then $\Lcal\I\subseteq\Omega_{H_k}$ is also prime.
\end{cor}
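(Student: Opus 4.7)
The plan is to invoke the inductive criterion of Proposition \ref{PropEP}: it suffices to verify that $\mathcal{L}\mathscr{I} = [I_0,\ldots,I_{k-1},L(I_{k-1})]$ satisfies condition $\mathcal{P}(k')$ for every $0 \le k' \le k$.

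For $0 \le k' \le k-1$, the condition $\mathcal{P}(k')$ depends only on the components $I_0,\ldots,I_{k-1}$, which are unchanged when passing from $\mathscr{I}$ to $\mathcal{L}\mathscr{I}$. Since $\mathscr{I}$ is a prime ideal of $\Omega_{H_{k-1}}$, Proposition \ref{PropEP} gives $\mathcal{P}(k')$ for $\mathscr{I}$, hence also for $\mathcal{L}\mathscr{I}$.

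The whole point of the construction is that $\mathcal{P}(k)$ becomes automatic for $\mathcal{L}\mathscr{I}$. Indeed, the $k$-th component of $\mathcal{L}\mathscr{I}$ is $I_k := L(I_{k-1}) = (\res^k_{k-1})^{-1}(I_{k-1}) = L_k(I_{k-1})$. Therefore, in the statement of $\mathcal{P}(k)$, the hypothesis ``$a \in L_k(I_{k-1})$'' is literally ``$a \in I_k$''. Hence the implication
\[ a\cdot \jnd^k_i(b) \in I_k \ \ \Longrightarrow\ \ a \in I_k \]
holds trivially for any $0 \le i \le k$ and any $b \in L_i(I_{i-1}) \setminus I_i$, and $\mathcal{P}(k)$ is satisfied.

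Finally, we must check that $\mathcal{L}\mathscr{I}$ is a proper ideal of $\Omega_{H_k}$, so that the primeness assertion is non-vacuous. This is immediate: by Corollary \ref{CorPrimeRestr} the restriction $\mathscr{I}|_{H_0} = [I_0]$ is a prime ideal of $R_0 \cong \mathbb{Z}$, so $I_0 \subsetneq R_0$, and therefore $\mathcal{L}\mathscr{I} \ne \Omega_{H_k}$. There is no serious obstacle here; the only thing one needs to notice is that the definition of $L(I_{k-1})$ as the preimage under $\res^k_{k-1}$ collapses the primeness test at level $k$ into a tautology.
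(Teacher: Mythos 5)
Your proof is correct and is essentially the same as the paper's: the key observation in both is that with $I_k = L(I_{k-1}) = L_k(I_{k-1})$, condition $\Pcal(k)$ becomes tautological because the hypothesis $a \in L_k(I_{k-1})$ is precisely $a \in I_k$. You merely spell out two things the paper leaves implicit (that $\Pcal(k')$ for $k' < k$ is inherited from primeness of $\I$ via Proposition \ref{PropEP}, and that $\Lcal\I$ is proper), which is sound but adds nothing to the paper's one-line argument.
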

\begin{proof}
It suffices to show $\Pcal(k)$ is satisfied. However for $\Lcal\I=[I_0,\ldots,I_{k-1},I_k=L(I_{k-1})]$, this condition becomes trivial as follows.
\begin{itemize}
\item For any $0\le i\le k$, $a\in L_k(I_{k-1})$ and $b\in L_i(I_{i-1})\setminus I_i$,
\begin{equation}
a\cdot\jnd^k_i(b)\in I_k\ \ \Longrightarrow\ \ a\in I_k
\end{equation}
holds.
\end{itemize}
Of course this is satisfied, since $a$ belongs to $I_k=L_k(I_{k-1})$ from the first.
\end{proof}

\section{Ideals $J_{\ell,k}(x)\subseteq R_{\ell}$ }

In this section, we introduce ideals $J_{\ell,k}(x)$ of $R_{\ell}$, which will perform an essential role in determining the prime ideals of $\Omega$.

First we prepare another $\Z$-basis for $R_{\ell}$, which is more suitable for calculation.
\begin{dfn}\label{DefF}
For any $0\le i\le \ell\le r$, define $F_{\ell,i}\in R_{\ell}$ by
\[ F_{\ell,i}=X_{\ell,i}-p^{\ell-i}. \]
Obviously, each $R_{\ell}$ admits a $\Z$-basis
\[ \{ 1,F_{\ell,0},F_{\ell,1},\ldots,F_{\ell,\ell-1} \} \]
for $0\le\ell\le r$, and thus any element $\alpha\in R_{\ell}$ can be written as
\[ \alpha=m_{\ell}+\sum_{i=0}^{\ell-1}m_iF_{\ell,i} \]
for some uniquely determined $m_0,\ldots,m_{\ell}\in\Z$.
\end{dfn}
This basis behaves well with the multiplication and the structure morphisms as follows.
\begin{prop}\label{PropF}
The following holds.
\begin{enumerate}
\item For any $0\le i,j\le \ell$, we have
\[ X_{\ell,j}\cdot F_{\ell,i}=%
\begin{cases}
\ p^{\ell-j}F_{\ell,i}-p^{\ell-i}F_{\ell,j}&(i\le j),\\
\ 0&(j\le i).
\end{cases}
\]
\item For any $0\le i,k\le\ell$, we have
\[ \res^{\ell}_k(F_{\ell,i})=%
\begin{cases}
\ p^{\ell-k}F_{k,i}&(i\le k),\\
0&(k\le i).
\end{cases}
\]
In particular, we have
\[ \res^{\ell}_{\ell-1}(\alpha)=m_{\ell}+\sum_{i=0}^{\ell-2}m_ipF_{\ell-1,i} \]
for any $\alpha=m_{\ell}+\displaystyle\sum_{i=0}^{\ell-1}m_iF_{\ell,i}\in R_{\ell}$.
\item When $\ell\ge 1$, for any
\[ \alpha=n_{\ell-1}+\displaystyle\sum_{i=0}^{\ell-2}m_iF_{\ell-1,i}\in R_{\ell-1}, \]
we have
\[ \jnd^{\ell}_{\ell-1}(\alpha)=(n_{\ell-1})^p+\sum_{i=0}^{\ell-1}u_iF_{\ell,i} \]
for some $u_0,\ldots,u_{\ell-1}\in\Z$.

\item When $\ell\ge 2$, for any $0\le i\le\ell-1$, we have
\[ \ind^{\ell}_{\ell-1}(F_{\ell-1,i})=F_{\ell,i}-p^{\ell-i-1}F_{\ell,\ell-1}. \]
Moreover, $F_{\ell,\ell-1}$ is calculated as
\[ F_{\ell,\ell-1}=\ \jnd^{\ell}_{\ell-1}(F_{\ell-1,\ell-2})+\frac{(-p)^p}{p^2}\ind^{\ell}_{\ell-1}(F_{\ell-1,\ell-2}). \]
\end{enumerate}
\end{prop}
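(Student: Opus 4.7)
Parts (1) and (2) reduce to direct computations via the substitution $F_{\ell,i} = X_{\ell,i} - p^{\ell-i}$. For (1), expanding $X_{\ell,j} \cdot F_{\ell,i} = X_{\ell,j} X_{\ell,i} - p^{\ell-i} X_{\ell,j}$ with the multiplication law of Corollary \ref{CorRk}\,(1) yields $p^{\ell-j} X_{\ell,i} - p^{\ell-i} X_{\ell,j}$ in the case $i \le j$, which rewrites as $p^{\ell-j} F_{\ell,i} - p^{\ell-i} F_{\ell,j}$ after converting each $X$ back to $F$; in the case $j \le i$, both terms equal $p^{\ell-i} X_{\ell,j}$ and cancel. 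For (2), Corollary \ref{CorRk}\,(2) gives $\res^{\ell}_{k}(F_{\ell,i}) = p^{\ell-k} X_{k,i} - p^{\ell-i} = p^{\ell-k} F_{k,i}$ when $i \le k$ and $\res^{\ell}_{k}(F_{\ell,i}) = p^{\ell-i} - p^{\ell-i} = 0$ when $k \le i$; in particular $\res^{\ell}_{\ell-1}(F_{\ell,\ell-1}) = p F_{\ell-1,\ell-1} = 0$, because $F_{\ell-1,\ell-1} = X_{\ell-1,\ell-1} - 1 = 0$. The explicit formula for $\res^{\ell}_{\ell-1}(\alpha)$ then follows by $\Z$-linearity.

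For (3), the plan is to extract the constant term of $\jnd^{\ell}_{\ell-1}(\alpha)$ in the $F$-basis via the ring homomorphism $\res^{\ell}_{0}$. Part (2) shows $\res^{\ell}_{0}(F_{\ell,i}) = 0$ for every $0 \le i < \ell$ (using also $F_{0,0} = 0$); hence if we write $\jnd^{\ell}_{\ell-1}(\alpha) = v + \sum_{i} u_i F_{\ell,i}$ with $v, u_i \in \Z$, then $v = \res^{\ell}_{0}(\jnd^{\ell}_{\ell-1}(\alpha))$. Using the factorisation $\res^{\ell}_{0} = \res^{\ell-1}_{0} \circ \res^{\ell}_{\ell-1}$ coming from $\pro^{\ell}_{0} = \pro^{\ell}_{\ell-1} \circ \pro^{\ell-1}_{0}$, together with the identity $\res^{\ell}_{\ell-1} \circ \jnd^{\ell}_{\ell-1}(\beta) = \beta^{p}$ established in the proof of Proposition \ref{PropLandS}, we obtain $v = \res^{\ell-1}_{0}(\alpha)^{p} = n_{\ell-1}^{p}$, where the last equality applies (2) to $\alpha$.

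The first identity in (4) is immediate from the additivity of $\ind^{\ell}_{\ell-1}$ and Remark \ref{Remind}: $\ind^{\ell}_{\ell-1}(F_{\ell-1,i}) = X_{\ell,i} - p^{\ell-1-i} X_{\ell,\ell-1} = F_{\ell,i} - p^{\ell-1-i} F_{\ell,\ell-1}$, after converting both $X$'s back to $F$'s. For the formula expressing $F_{\ell,\ell-1}$, the plan is to compute $\jnd^{\ell}_{\ell-1}(F_{\ell-1,\ell-2})$ directly: writing $F_{\ell-1,\ell-2} = X_{\ell-1,\ell-2} - p$ in the $X$-basis (so $m_{\ell-2} = 1$, $m_{\ell-1} = -p$, all other $m_{s} = 0$) and substituting into Corollary \ref{Corjnd} with $k = \ell-1$, most summands of the third sum vanish because $m_{\ell-2} p + m_{\ell-1} = 0$; only the $i = \ell-2$ and $i = \ell-1$ contributions survive, yielding
\[
\jnd^{\ell}_{\ell-1}(F_{\ell-1,\ell-2}) = \frac{(-p)^{p} + p}{p}\, F_{\ell,\ell-1} - \frac{(-p)^{p}}{p^{2}}\, F_{\ell,\ell-2}
\]
after converting back to the $F$-basis. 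Adding $\frac{(-p)^{p}}{p^{2}}\, \ind^{\ell}_{\ell-1}(F_{\ell-1,\ell-2}) = \frac{(-p)^{p}}{p^{2}}(F_{\ell,\ell-2} - p F_{\ell,\ell-1})$ then cancels the $F_{\ell,\ell-2}$ terms and collapses the $F_{\ell,\ell-1}$ coefficient to $1$.

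The main obstacle is this last computation: Corollary \ref{Corjnd} has a bulky appearance, and careful bookkeeping is required to identify which summands survive for $\alpha = F_{\ell-1,\ell-2}$ and to verify that the surviving coefficients are integers, before the final cancellation. Every other step of the proof is essentially a linear-algebraic manipulation grounded in Corollaries \ref{CorRk} and \ref{Corjnd}, together with the restriction--norm identity imported from Proposition \ref{PropLandS}.
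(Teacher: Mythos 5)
Your proof is correct throughout. Parts (1), (2), and (4) follow essentially the paper's line: direct substitution via $F_{\ell,i}=X_{\ell,i}-p^{\ell-i}$ for (1) and (2), and for (4) the explicit expansion of $\jnd^{\ell}_{\ell-1}(F_{\ell-1,\ell-2})$ from Corollary~\ref{Corjnd} followed by the cancellation against the $\ind$ term. Your part (3), however, takes a genuinely different and cleaner route. The paper proves (3) by a somewhat bulky change-of-basis computation: it rewrites $\alpha$ in the $X$-basis, substitutes into the explicit formula of Corollary~\ref{Corjnd}, and then tracks the constant coefficient after converting back to the $F$-basis. You instead observe that the constant term in the $F$-basis is recovered by the ring homomorphism $\res^{\ell}_{0}$ (since $\res^{\ell}_{0}(F_{\ell,i})=0$ for all $i<\ell$), factor $\res^{\ell}_{0}=\res^{\ell-1}_{0}\circ\res^{\ell}_{\ell-1}$, and use the identity $\res^{\ell}_{\ell-1}\circ\jnd^{\ell}_{\ell-1}(\alpha)=\alpha^{p}$ from Proposition~\ref{PropLandS} to get the constant term as $\bigl(\res^{\ell-1}_{0}(\alpha)\bigr)^{p}=n_{\ell-1}^{p}$. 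This avoids Corollary~\ref{Corjnd} entirely for part (3) and is more conceptual: it isolates exactly the structural facts needed (restriction is a ring map, restriction composes, $\res\circ\jnd$ is $p$-th power, the $F$'s die under restriction to $R_0$), which also makes the argument more robust. The only minor remark: the existence of integers $u_i$ is automatic since $\{1,F_{\ell,0},\ldots,F_{\ell,\ell-1}\}$ is a $\Z$-basis of $R_{\ell}$, so only the identification of the constant term carries content, and your argument supplies it.
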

\begin{proof}
{\rm (1)} and {\rm (2)} follows immediately from Corollary \ref{CorRk}.

\noindent {\rm (3)} For any $\alpha=n_{\ell-1}+\displaystyle\sum_{i=0}^{\ell-2}m_iF_{\ell-1,i}\in R_{\ell-1}$, if we put $m_{\ell-1}=n_{\ell-1}-\displaystyle\sum_{i=0}^{\ell-2}m_ip^{\ell-i-1}$, then we have
\[ \alpha=m_{\ell-1}+\displaystyle\sum_{i=0}^{\ell-2}m_iX_{\ell-1,i} \]
and thus by Corollary \ref{Corjnd}, we obtain
\begin{eqnarray*}
\jnd^{\ell}_{\ell-1}(\alpha)&=&m_{\ell-1}+\frac{(m_{\ell-1})^p-m_{\ell-1}}{p}X_{\ell,\ell-1}\\
&+&\sum_{i=0}^{\ell-2}\frac{(\sum_{s=i}^{\ell-1}m_sp^{\ell-1-s})^p-(\sum_{s=i+1}^{\ell-1}m_sp^{\ell-1-s})^p}{p^i}X_{\ell,i}\\
&=&m_{\ell-1}+\{ (m_{\ell-1})^p-m_{\ell-1} \}+\sum_{i=0}^{\ell-2}\Set{ (\textstyle\sum_{s=i}^{\ell-1}m_sp^{\ell-1-s})^p-(\textstyle\sum_{s=i+1}^{\ell-1}m_sp^{\ell-1-s})^p }\\
&+&\frac{(m_{\ell-1})^p-m_{\ell-1}}{p}F_{\ell,\ell-1}+\sum_{i=0}^{\ell-2}\frac{(\sum_{s=i}^{\ell-1}m_sp^{\ell-1-s})^p-(\sum_{s=i+1}^{\ell-1}m_sp^{\ell-1-s})^p}{p^i}F_{\ell,i}\\
&=&(n_{\ell-1})^p+\frac{(m_{\ell-1})^p-m_{\ell-1}}{p}F_{\ell,\ell-1}\\
&+&\sum_{i=0}^{\ell-2}\frac{(n_{\ell-1}-\sum_{s=0}^{i-1}m_sp^{\ell-1-s})^p-(n_{\ell-1}-\sum_{s=0}^{i}m_sp^{\ell-1-s})^p}{p^i}F_{\ell,i}.
\end{eqnarray*}

\noindent {\rm (4)} By Remark \ref{Remind}, we have
\begin{eqnarray*}
\ind^{\ell}_{\ell-1}&&\!\!\!\!\!\!\!\!\!\!\!\!\!\!\!\!\! (F_{\ell-1,i})+p^{\ell-i-1}F_{\ell,\ell-1}\\
&=&(X_{\ell,i}-p^{\ell-i-1}X_{\ell,\ell-1})+(p^{\ell-i-1}X_{\ell,\ell-1}-p^{\ell-i})\\
&=&F_{\ell,i}.
\end{eqnarray*}
Moreover, by Remark \ref{Remind} and Corollary \ref{Corjnd}, we have
\begin{eqnarray*}
&&\!\!\!\!\!\!\!\!\!\!\!\!\!\!\!\!\!\!\!\!\!\!\jnd^{\ell}_{\ell-1}(F_{\ell-1,\ell-2})+\frac{(-p)^p}{p^2}\ind^{\ell}_{\ell-1}(F_{\ell-1,\ell-2})\\
&=&(-p+\frac{(-p)^p+p}{p}X_{\ell,\ell-1}-\frac{(-p)^p}{p^2}X_{\ell,\ell-2})\ +\ \frac{(-p)^p}{p^2}(X_{\ell,\ell-2}-pX_{\ell,\ell-1})\\
&=&X_{\ell,\ell-1}-p\ \ =\ \ F_{\ell,\ell-1}.
\end{eqnarray*}
\end{proof}

\begin{lem}\label{Lemj}
For $1\le\ell\le r$ and $n\in\Z$, we have the following.
\begin{enumerate}
\item If $n$ is not divisible by $p$, then we have
\[ n=\ \jnd^{\ell}_{\ell-1}(n)-\frac{n^{p-1}-1}{p}\ind^{\ell}_{\ell-1}(n). \]
Remark that $\frac{n^{p-1}-1}{p}$ is an integer, by assumption.
\item If $n=pu$ for some $u\in\Z$, then we have
\[ n=\ind^{\ell}_{\ell-1}(u)-uF_{\ell,\ell-1}. \]
\end{enumerate}
\end{lem}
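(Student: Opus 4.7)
The plan is to deduce both identities by direct computation from Corollary \ref{Corjnd}, Remark \ref{Remind}, and the definition $F_{\ell,\ell-1}=X_{\ell,\ell-1}-p$.

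For part (1), I would view the integer $n\in R_{\ell-1}$ as the element $nX_{\ell-1,\ell-1}$, i.e.\ apply Corollary \ref{Corjnd} with $k=\ell-1$, $m_{\ell-1}=n$ and $m_i=0$ for $i<\ell-1$. In that formula, the lower sum $\sum_{0\le i<\ell-1}$ contributes zero: every inner sum $\sum_{s=i}^{\ell-1}m_sp^{\ell-1-s}$ reduces to the single term at $s=\ell-1$, which equals $n$, so the numerators vanish. The middle sum $\sum_{\ell-1\le i<\ell}$ collapses to its unique term at $i=\ell-1$, giving $\tfrac{n^p-n}{p}X_{\ell,\ell-1}$, which is an integer by Fermat's little theorem since $p\nmid n$. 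Hence
\[ \jnd^{\ell}_{\ell-1}(n)=n+\tfrac{n^p-n}{p}X_{\ell,\ell-1}. \]
On the other hand, Remark \ref{Remind} gives $\ind^{\ell}_{\ell-1}(n)=nX_{\ell,\ell-1}$. Subtracting $\tfrac{n^{p-1}-1}{p}$ times the latter from the former, the coefficient of $X_{\ell,\ell-1}$ becomes $\tfrac{n^p-n}{p}-\tfrac{n(n^{p-1}-1)}{p}=0$, leaving exactly $n$, as desired.

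For part (2), write $n=pu$. By Remark \ref{Remind}, $\ind^{\ell}_{\ell-1}(u)=uX_{\ell,\ell-1}$, while $uF_{\ell,\ell-1}=u(X_{\ell,\ell-1}-p)$ by Definition \ref{DefF}. Subtracting yields
\[ \ind^{\ell}_{\ell-1}(u)-uF_{\ell,\ell-1}=uX_{\ell,\ell-1}-uX_{\ell,\ell-1}+pu=pu=n. \]

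The only substantive step is the simplification of $\jnd^{\ell}_{\ell-1}(n)$ in part (1); once the indexing of Corollary \ref{Corjnd} is handled carefully and Fermat's little theorem is invoked for integrality of $\tfrac{n^{p-1}-1}{p}$, everything reduces to cancellation. Part (2) is a one-line identity.
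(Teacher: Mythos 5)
Your proof is correct and follows essentially the same route as the paper: apply Corollary \ref{Corjnd} (specialized to $k=\ell-1$, $m_i=0$ for $i<\ell-1$) to get $\jnd^{\ell}_{\ell-1}(n)=n+\frac{n^p-n}{p}X_{\ell,\ell-1}$, use Remark \ref{Remind} to get $\ind^{\ell}_{\ell-1}(n)=nX_{\ell,\ell-1}$, and cancel; part (2) is the same one-line computation. One small imprecision: the coefficient $\frac{n^p-n}{p}$ is an integer for \emph{all} $n$ (since $n^p-n=n(n^{p-1}-1)$), whereas it is only $\frac{n^{p-1}-1}{p}$ whose integrality needs the hypothesis $p\nmid n$ via Fermat's little theorem.
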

\begin{proof}
\noindent{\rm (1)} By Remark \ref{Remind} and Corollary \ref{Corjnd}, we have
\[ \jnd^k_{k-1}(n)-\frac{n^{p-1}-1}{p}\ind^k_{k-1}(n)
=n+\frac{n^p-n}{p}X_{k,k-1}-\frac{n^{p-1}-1}{p}nX_{k,k-1}=n. \]

\noindent{\rm (2)} This follows from
\begin{eqnarray*}
\ind^{\ell}_{\ell-1}(u)-uF_{\ell,\ell-1}&=&uX_{\ell,\ell-1}-u(X_{\ell,\ell-1}-p)\\
&=&pu\ =\ n.
\end{eqnarray*}
\end{proof}

\begin{dfn}\label{DefJ}
For any $0\le\ell\le r$, $0\le k\le\ell$ and $x\in\Z$, we define an ideal $J_{\ell,k}(x)\subseteq R_{\ell}$ by
\[ J_{\ell,k}(x)=%
\begin{cases}
\ (x,F_{\ell,k},F_{\ell,k+1},\ldots,F_{\ell,\ell-1})&(k\le\ell-1),\\
\ (x)&(k=\ell).
\end{cases}
\]
\end{dfn}

\bigskip

$J_{\ell,k}(x)$ can be calculated as follows.
\begin{prop}\label{PropJ}
For any $1\le\ell\le r$, $0\le k\le\ell-1$ and $x\in\Z$, we have
\begin{eqnarray*}
J_{\ell,k}(x)&=&\Set{ x\cdot(n_{\ell}+\sum_{i=0}^{k-1}n_iF_{\ell,i})+\sum_{i=k}^{\ell-1}n_iF_{\ell,i}| \ n_0,\ldots,n_{\ell}\in\Z }\\
&=&\Set{ m_{\ell}+\sum_{i=0}^{\ell-1}m_iF_{\ell,i}| \begin{array}{l}m_0,\ldots,m_{\ell}\in\Z,\\ m_{\ell}\in\Z x,\\ m_i\in\Z x\ \ (0\le i\le k-1)\end{array}}.
\end{eqnarray*}
\end{prop}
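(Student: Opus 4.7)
The plan is to let $S$ denote the set appearing on the right-hand side (either description, as their equivalence is obtained by expanding $x\cdot(n_\ell+\sum_{i<k}n_iF_{\ell,i})$ and setting $m_\ell=xn_\ell$, $m_i=xn_i$ for $i<k$, $m_i=n_i$ for $i\ge k$, which just uses $\Z x=\{xn\mid n\in\Z\}$). I would then prove $J_{\ell,k}(x)=S$ by showing both inclusions, the nontrivial direction being that $S$ is an ideal containing the generators.

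The inclusion $S\subseteq J_{\ell,k}(x)$ is immediate from the first description: every element of $S$ is visibly an $R_\ell$-linear combination of $x$ and the $F_{\ell,i}$ for $k\le i\le\ell-1$, hence lies in the ideal $(x,F_{\ell,k},\ldots,F_{\ell,\ell-1})$. The generators themselves lie in $S$: take $n_\ell=1$ and all other $n_i=0$ to obtain $x\in S$; and for each $i\ge k$ take $n_i=1$ (in the ``second sum'') and all other $n_j=0$ to obtain $F_{\ell,i}\in S$.

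For $J_{\ell,k}(x)\subseteq S$, I would show that $S$ is an ideal of $R_\ell$. Closure under addition is clear from the second description (coefficient conditions are stable under sum). Since $R_\ell$ is $\Z$-spanned by $\{X_{\ell,j}\}_{0\le j\le\ell}$ with $X_{\ell,\ell}=1$, it suffices to check closure under multiplication by each $X_{\ell,j}$ with $j<\ell$. Writing $X_{\ell,j}=F_{\ell,j}+p^{\ell-j}$, the scalar part $p^{\ell-j}$ multiplies every coefficient by $p^{\ell-j}$ and clearly preserves the conditions $m_\ell\in\Z x$ and $m_i\in\Z x$ for $i<k$. For the $F_{\ell,j}$ part, I would use the formulas in Proposition \ref{PropF}(1) to compute $F_{\ell,j}\cdot F_{\ell,i}$, obtaining $-p^{\ell-\min(i,j)}F_{\ell,\max(i,j)}$ (from a quick expansion of $(X_{\ell,j}-p^{\ell-j})F_{\ell,i}$ using the cases $i\le j$ and $j<i$). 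Then multiplying a general element $\alpha=m_\ell+\sum_i m_iF_{\ell,i}\in S$ by $F_{\ell,j}$ yields an element whose constant coefficient is $0$, and whose $F_{\ell,i}$-coefficient for $i<k$ is either $0$, $-p^{\ell-j}m_i$ (when $i>j$), or $m_\ell-\sum_{i'=0}^{j}p^{\ell-i'}m_{i'}$ (when $i=j<k$); in every subcase each contribution lies in $\Z x$ because the relevant $m$-entries do.

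The main obstacle, such as it is, is purely bookkeeping: organizing the case split on whether $i\le j$ or $i>j$ and on whether the resulting index falls below $k$ or not, so as to verify that the $\Z x$-condition on the constant term and on the coefficients of $F_{\ell,i}$ with $i<k$ is preserved after multiplying by $X_{\ell,j}$. Once this case analysis is in hand, $S$ is an ideal containing the generators $x,F_{\ell,k},\ldots,F_{\ell,\ell-1}$, which yields $J_{\ell,k}(x)\subseteq S$ and completes the proof.
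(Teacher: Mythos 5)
Your proposal is correct, and the core computation matches the paper's, but you organize the hard inclusion $J_{\ell,k}(x)\subseteq S$ differently. The paper argues directly that for every $\alpha\in R_\ell$, the products $\alpha\cdot x$ and $\alpha\cdot F_{\ell,i}$ with $k\le i\le\ell-1$ land in $S$; since the generators already have index $\ge k$, the resulting $F$-coefficients automatically sit at indices $\ge k$, so no $\Z x$-divisibility condition ever needs to be checked on the coefficients --- one only needs the readily verified formula $\alpha F_{\ell,i}=\bigl(\sum_{j>i}m_jp^{\ell-j}\bigr)F_{\ell,i}-p^{\ell-i}\sum_{j>i}m_jF_{\ell,j}$. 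You instead prove the stronger statement that $S$ itself is an ideal, which requires multiplying an arbitrary $\alpha\in S$ by $F_{\ell,j}$ for \emph{every} $j$, including $j<k$; this forces the case split on whether the index falls below $k$ and the check that the surviving coefficients inherit the $\Z x$-condition from $\alpha$. Both routes succeed. Yours is the more structural formulation (``$S$ is an ideal containing the generators'') and makes closure under addition explicit, whereas the paper's is a touch leaner because restricting to generators $F_{\ell,i}$, $i\ge k$, sidesteps the $\Z x$-bookkeeping entirely. A minor presentational note: you quote the clean formula $F_{\ell,j}F_{\ell,i}=-p^{\ell-\min(i,j)}F_{\ell,\max(i,j)}$, which is a nice repackaging of Proposition \ref{PropF}(1) not written out in the paper but follows immediately from it, and it does make your ideal-closure check transparent.
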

\begin{proof}
Obviously, $J_{\ell,k}(x)$ contains any element of the form
\begin{equation}\label{alpha}
x\cdot(n_{\ell}+\sum_{i=0}^{k-1}n_iF_{\ell,i})+\sum_{i=k}^{\ell-1}n_iF_{\ell,i}.
\end{equation}
To show the converse, since any element in $J_{\ell,k}(x)$ can be written as an $R_{\ell}$-coefficient sum of
\[ x, F_{\ell,k},\ldots,F_{\ell,\ell-1}, \]
it suffices to show that any element
\begin{itemize}
\item[{\rm (i)}] $\alpha x$
\item[{\rm (ii)}] $\alpha F_{\ell,i}\ (k\le i\le\ell-1)$
\end{itemize}
can be written in the form of $(\ref{alpha})$ for any $\alpha\in R_{\ell}$.

\medskip

\noindent{\rm (i)} For any $\alpha=m_{\ell}+\displaystyle\sum_{i=0}^{\ell-1}m_iF_{\ell,i}\ (m_i\in\Z)$, we have
\[ \alpha x=x\cdot(m_{\ell}+\sum_{i=0}^{k-1}m_iF_{\ell,i})+\sum_{i=k}^{\ell-1}xm_iF_{\ell,i}. \]

\noindent{\rm (ii)} For any $\alpha=\displaystyle\sum_{j=0}^{\ell}m_jX_{\ell,j}\ (m_j\in\Z)$, we have
\begin{eqnarray*}
\alpha F_{\ell,i}&=&\sum_{j=0}^im_jX_{\ell,j}F_{\ell,i}+\sum_{j=i+1}^{\ell}m_jX_{\ell,j}F_{\ell,i}\\
&=&\sum_{j=i+1}^{\ell}m_j(p^{\ell-j}F_{\ell,i}-p^{\ell-i}F_{\ell,j})\\
&=&\big(\sum_{j=i+1}^{\ell}m_jp^{\ell-j}\big)F_{\ell,i}-\sum_{j=i+1}^{\ell}m_jp^{\ell-i}F_{\ell,j}
\end{eqnarray*}
for any $k\le i\le\ell-1$ by Proposition \ref{PropF}.
\end{proof}

\begin{prop}\label{PropLSJ}
For any $1\le\ell\le r$, $0\le k\le\ell-1$ and any $n\in\Z$, we have the following.
\begin{enumerate}
\item If $n=0$ or $n=q$ for some prime integer $q$ different from $p$, then we have
\begin{itemize}
\item[{\rm (i)}] $L(J_{\ell-1,k}(n))=J_{\ell,k}(n)$,
\item[{\rm (ii)}] $S(J_{\ell-1,k}(n))=%
\begin{cases}
\, J_{\ell,k}(n)&(k\le\ell-2),\\
\, J_{\ell,\ell}(n)=(n)&(k=\ell-1).
\end{cases}$
\end{itemize}
Moreover, when $n=q$ is a prime different from $p$, then there is no ideal between $J_{\ell,k+1}(q)\subsetneq J_{\ell,k}(q)$.

\item For $k=0$ and $n=p^{e+1}$ for some $e\in\N_{\ge0}$, we have
\begin{itemize}
\item[{\rm (i)}] $L(J_{\ell-1,0}(p^{e+1}))=J_{\ell,0}(p^{e+1})$,
\item[{\rm (ii)}] $S(J_{\ell-1,0}(p^{e+1}))=J_{\ell,0}(p^{e+2})$.
\end{itemize}
Moreover, there is no ideal between $J_{\ell,0}(p^{e+2})\subsetneq J_{\ell,0}(p^{e+1})$.
\end{enumerate}
\end{prop}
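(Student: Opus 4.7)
The plan is to verify each claim by direct computation in the $F$-basis (Proposition \ref{PropF}) together with the description of $J_{\ell,k}(x)$ from Proposition \ref{PropJ}. Throughout, the trivial inclusion $S(I)\subseteq L(I)$ (immediate from Proposition \ref{PropLandS}) means that once (i) is established, the containment $S(\cdot)\subseteq J_{\ell,k}(n)$ in the corresponding (ii) is free, and only the reverse direction requires work.

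For (1)(i) and (2)(i), Proposition \ref{PropF}(2) gives $\res^\ell_{\ell-1}(m_\ell + \sum m_i F_{\ell,i}) = m_\ell + \sum m_i\,p\,F_{\ell-1,i}$, so $\res^\ell_{\ell-1}(\alpha)\in J_{\ell-1,k}(n)$ unravels via Proposition \ref{PropJ} into divisibility conditions on $m_\ell$ and $m_0,\ldots,m_{k-1}$. For $n=0$ and $n=q$ with $\gcd(p,q)=1$ (and similarly $n=p^{e+1}$ with $k=0$), these collapse to exactly the defining conditions of $J_{\ell,k}(n)$ in Proposition \ref{PropJ}; in the $n=q$ case one uses $\gcd(p,q)=1$ to pass from $m_i\,p\in q\Z$ to $m_i\in q\Z$.

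For the nontrivial $\supseteq$ inclusions in the $S$-computations, the strategy is to exhibit each generator of the target ideal inside $S(J_{\ell-1,k}(n))$. Proposition \ref{PropF}(4) expresses $F_{\ell,\ell-1}$ as an integer combination of $\jnd^\ell_{\ell-1}(F_{\ell-1,\ell-2})$ and $\ind^\ell_{\ell-1}(F_{\ell-1,\ell-2})$; whenever $k\le\ell-2$ this places $F_{\ell,\ell-1}\in S(\cdot)$, and then the identity $\ind^\ell_{\ell-1}(F_{\ell-1,i})=F_{\ell,i}-p^{\ell-i-1}F_{\ell,\ell-1}$ yields $F_{\ell,i}$ for $k\le i\le\ell-2$. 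For the integer generator, Lemma \ref{Lemj}(1) recovers $q$ from $\ind^\ell_{\ell-1}(q)$ and $\jnd^\ell_{\ell-1}(q)$. The edge case $k=\ell-1$ of (1)(ii) is slightly different: the target is $(q)=J_{\ell,\ell}(q)$ rather than $J_{\ell,\ell-1}(q)$, and the containment $S((q))\subseteq (q)$ hinges on $\tfrac{q^p-q}{p}=q\cdot\tfrac{q^{p-1}-1}{p}$ being divisible by $q$ (Fermat).

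The main obstacle is (2)(ii), where $S(J_{\ell-1,0}(p^{e+1}))$ must jump from $J_{\ell,0}(p^{e+1})$ up to $J_{\ell,0}(p^{e+2})$. The driver is the expansion $\ind^\ell_{\ell-1}(p^{e+1})=p^{e+1}X_{\ell,\ell-1}=p^{e+2}+p^{e+1}F_{\ell,\ell-1}$, which combined with the already-produced $F_{\ell,\ell-1}\in S(\cdot)$ extracts $p^{e+2}$; the opposite containment additionally uses $p^{p(e+1)}\in(p^{e+2})$, valid because $p(e+1)\ge e+2$ for all $p\ge 2,\,e\ge 0$. Finally, the two ``no intermediate ideal'' claims follow by analyzing the quotients as modules: using $X_{\ell,i}F_{\ell,k}=0$ for $i\le k$ (Proposition \ref{PropF}(1)) together with $X_{\ell,i}\equiv p^{\ell-i}$ modulo $(F_{\ell,k+1},\ldots,F_{\ell,\ell-1})$, the quotient $J_{\ell,k}(q)/J_{\ell,k+1}(q)$ is a one-dimensional $\Z/q\Z$-vector space, hence simple; and $R_\ell/J_{\ell,0}(p^{e+2})\cong\Z/p^{e+2}\Z$ identifies $J_{\ell,0}(p^{e+1})/J_{\ell,0}(p^{e+2})$ with $p^{e+1}\Z/p^{e+2}\Z\cong\Z/p\Z$, which has no proper nonzero submodules.
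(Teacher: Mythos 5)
Your proposal is correct and, for the $L$- and $S$-computations, follows the same route as the paper: compute $\res^\ell_{\ell-1}$ in the $F$-basis for the $L$-statements, use $S\subseteq L$ plus the explicit identities from Proposition~\ref{PropF}(4) and Lemma~\ref{Lemj} to produce the generators $F_{\ell,\ell-1}$, $F_{\ell,i}$ and the integer generator inside $S(\cdot)$, and handle the edge case $k=\ell-1$ and the exponent bump $p^{e+1}\mapsto p^{e+2}$ separately. The one place you genuinely diverge is the two ``no intermediate ideal'' claims: the paper argues by element-chasing (pick $\alpha\in I\setminus J_{\ell,k+1}(q)$, extract $n_kF_{\ell,k}\in I$ with $q\nmid n_k$, combine with $qF_{\ell,k}\in I$ via B\'ezout to get $F_{\ell,k}\in I$), whereas you observe that $J_{\ell,k}(q)/J_{\ell,k+1}(q)\cong\Z/q\Z$ and $J_{\ell,0}(p^{e+1})/J_{\ell,0}(p^{e+2})\cong\Z/p\Z$ as abelian groups, so any $R_\ell$-submodule of either quotient is an abelian subgroup of a prime-order cyclic group, hence trivial. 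Your reformulation is cleaner and makes the ``no intermediate ideal'' claim manifestly a statement about the $\Z$-module structure; the module-action details you record ($X_{\ell,i}F_{\ell,k}=0$ for $i\le k$, etc.) are actually unnecessary for this step, since the prime-order abelian group argument already suffices. Two small points you glossed over but which are easily filled: for $n=0$ in (1)(ii) with $k=\ell-1$ the inclusion $S((0))\subseteq(0)$ is just $\ind(0)=\jnd(0)=0$; and for the reverse inclusion in (1)(ii) with $k=\ell-1$ one needs, as the paper does, to factor a general element of $(q)\subseteq R_{\ell-1}$ as $\alpha q$ and then use the projection formula $\ind(\alpha q)=q\,\ind(\alpha)$ together with $\jnd(\alpha q)=\jnd(\alpha)\jnd(q)$ before the Fermat divisibility you invoke kicks in.
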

\begin{proof}
{\rm (1)} {\rm (i)} 
By definition,
\[ L(J_{\ell-1,k}(n))=\{\alpha\in R_{\ell}\mid \res^{\ell}_{\ell-1}(\alpha)\in J_{\ell-1,k}(n) \}. \]
For any $\alpha=m_{\ell}+\displaystyle\sum_{i=0}^{\ell-1}m_iF_{\ell,i}\ (m_i\in\Z)$, since we have
\begin{eqnarray*}
\res^{\ell}_{\ell-1}(\alpha)&=&m_{\ell}+\sum_{i=0}^{\ell-1}m_ipF_{\ell-1,i}\\
&=&m_{\ell}+\sum_{i=0}^{k-1}m_ipF_{\ell-1,i}+\sum_{i=k}^{\ell-1}m_ipF_{\ell-1,i},
\end{eqnarray*}
this satisfies $\res^{\ell}_{\ell-1}(\alpha)\in J_{\ell-1,k}(n)$ if and only if
\begin{eqnarray*}
&m_{\ell}\in n\Z,&\\
&m_ip\in n\Z&(0\le i\le k-1),
\end{eqnarray*}
by Proposition \ref{PropJ}. Since $n$ is $0$ or a prime different from $p$, this is equivalent to
\[ m_{\ell}\in n\Z\quad\text{and}\quad m_i\in n\Z\ (0\le i\le k-1), \]
namely, to $\alpha\in J_{\ell,k}(n)$.

\noindent{\rm (ii)}
When $k\le\ell-2$, it suffices to show
\[ J_{\ell,k}(n)\subseteq S(J_{\ell-1,k}(n)). \]
In fact, this implies
\[ J_{\ell,k}(n)\subseteq S(J_{\ell-1,k}(n))\subseteq L(J_{\ell-1,k}(n))=J_{\ell,k}(n), \]
and thus $S(J_{\ell-1,k}(n))=L(J_{\ell-1,k}(n))=J_{\ell,k}(n)$ follows.

Thus it remains to show $J_{\ell,k}(n)=(n,F_{\ell,k},\ldots,F_{\ell,\ell-1})\subseteq S(J_{\ell-1,k}(n))$. However, this immediately follows from Proposition \ref{PropF} and Lemma \ref{Lemj}, since we have
\begin{eqnarray*}
n&=&\jnd^{\ell}_{\ell-1}(n)-\frac{n^{p-1}-1}{p}\ \ \ \ (\text{for}\ n\ne0),\\
F_{\ell,\ell-1}&=&\jnd^{\ell}_{\ell-1}(F_{\ell-1,\ell-2})+\frac{(-p)^p}{p^2}\ind^{\ell}_{\ell-1}(F_{\ell-1,\ell-2}),\\
F_{\ell,i}&=&\ind^{\ell}_{\ell-1}(F_{\ell-1,i})+p^{\ell-i-1}F_{\ell,\ell-1}\ \ \ (k\le i\le \ell-1).
\end{eqnarray*}

\medskip

When $k=\ell-1$, remark that we have $J_{\ell-1,\ell-1}(q)=(q)\subseteq R_{\ell-1}$.
If $n=0$, $(0)\subseteq S(J_{\ell-1,\ell-1}(0))$ is trivial.
If $n=q$ is a prime different from $p$, then by Lemma \ref{Lemj} we have
\[ q=\jnd^{\ell}_{\ell-1}(q)-\frac{q^{p-1}-1}{p}\ind^{\ell}_{\ell-1}(q)\in S((q)),  \]
which means $(q)\subseteq S(J_{\ell-1,\ell-1}(q))$. 

Conversely, if $n=q$ is a prime different from $p$, then for any $\alpha\in R_{\ell-1}$, we have
\begin{eqnarray*}
\jnd^{\ell}_{\ell-1}(\alpha q)&=&\jnd^{\ell}_{\ell-1}(\alpha)\cdot\jnd^{\ell}_{\ell-1}(q),\\
&=&\jnd^{\ell}_{\ell-1}(\alpha)\cdot(q+\frac{q^p-q}{p}X_{\ell,\ell-1}),\\
&=&\jnd^{\ell}_{\ell-1}(\alpha)\cdot(1+\frac{q^{p-1}-1}{p}X_{\ell,\ell-1})\cdot q\ \ \in (q),\\
\ind^{\ell}_{\ell-1}(\alpha q)&=&\ind^{\ell}_{\ell-1}(\alpha)\cdot q\ \ \in (q),
\end{eqnarray*}
which imply $S(J_{\ell-1,\ell-1}(q))\subseteq (q)$. Similarly, $S(J_{\ell-1,\ell-1}(0))\subseteq (0)$ follows from $\ind^{\ell}_{\ell-1}(0)=\jnd^{\ell}_{\ell-1}(0)=0$.

\bigskip

Thus it remains to show there is no ideal between $J_{\ell,k+1}(q)\subsetneq J_{\ell,k}(q)$ for a prime $q\ne p$. 
Suppose there is an ideal
\[ J_{\ell,k+1}(q) \subsetneq I\subseteq J_{\ell,k}(q). \]
By Proposition \ref{PropJ}, $I$ should contain an element
\[ \alpha=q\beta+\sum_{i=k}^{\ell-1}n_iF_{\ell,i} \]
in $J_{\ell,k}(q)$, for some $\beta\in R_{\ell}$ and $n_i\in\Z\ \ (k\le i\le \ell-1)$, which does not belong to $J_{\ell,k+1}(q)$.
Then $I$ should contain
\[ n_kF_{\ell,k}=\alpha-\Set{q\beta+\sum_{i=k+1}^{\ell-1}n_iF_{\ell,i}}. \]
Since $\alpha$ does not belong to $J_{\ell,k+1}(q)$, it follows that $q$ does not divide $n_k$, and thus $I$ contains an element of the form
\[ n_kF_{\ell,k}\quad(n_k\in\Z, \ \text{not divisible by}\ q). \]
On the other hand, $q\in J_{\ell,k+1}(q)\subseteq I$ implies
\[ qF_{\ell,k}\in I. \]
Since $q$ and $n_k$ are coprime, it follows $F_{\ell,k}\in I$, which means $I=J_{\ell,k}(q)$.

\bigskip

\noindent {\rm (2)} {\rm (i)}
For an element $\alpha=m_{\ell}+\displaystyle\sum_{i=0}^{\ell-1}m_iF_{\ell,i}$, since we have
\[ \res^{\ell}_{\ell-1}(\alpha)=m_{\ell}+\sum_{i=0}^{\ell-2}m_ipF_{\ell-1,i} \]
by Proposition \ref{PropF}, it satisfies $\alpha\in L(J_{\ell-1,0}(p^{e+1}))$ if and only if $m_{\ell}\in p^{e+1}\Z$, namely $\alpha\in J_{\ell,0}(p^{e+1})$.

\noindent {\rm (ii)}
By Proposition \ref{PropF} and Lemma \ref{Lemj}, we have
\begin{eqnarray*}
F_{\ell,\ell-1}&=&\jnd^{\ell}_{\ell-1}(F_{\ell-1,\ell-2})+\frac{(-p)^p}{p^2}\ind^{\ell}_{\ell-1}(F_{\ell-1,\ell-2}),\\ 
F_{\ell,i}&=&\ind^{\ell}_{\ell-1}(F_{\ell-1,i})+p^{\ell-i-1}(F_{\ell,\ell-1})\ \ (0\le i\le \ell-1),\\ 
p^{e+2}&=&\ind^{\ell}_{\ell-1}(p^{e+1})-p^{e+1}F_{\ell,\ell-1},
\end{eqnarray*}
which imply $J_{\ell,0}(p^{e+2})\subseteq S(L_{\ell-1, 0}(p^{e+1}))$.

To show the converse, by Proposition \ref{PropJ}, it suffices to show any element
\[ \alpha=p^{e+1}n_{\ell-1}+\sum_{i=0}^{\ell-2}m_iF_{\ell-1,i}\ \ \ (n_{\ell-1},m_i\in\Z\ \ (0\le i\le\ell-2)) \]
in $J_{\ell-1,0}(p^{e+1})$ satisfies $\ind^{\ell}_{\ell-1}(\alpha)\in J_{\ell,0}(p^{e+2})$ and $\jnd^{\ell}_{\ell-1}(\alpha)\in J_{\ell,0}(p^{e+2})$.
However, these follow from
\begin{eqnarray*}
\ind^{\ell}_{\ell-1}(\alpha)&=&p^{e+1}n_{\ell-1}X_{\ell,\ell-1}+\sum_{i=0}^{\ell-2}m_i(F_{\ell,i}-p^{\ell-i-1}F_{\ell,\ell-1})\\
&=&p^{e+2}n_{\ell-1}+\sum_{i=0}^{\ell-2}m_iF_i+(p^{e+1}n_{\ell-1}-\sum_{i=0}^{\ell-2}p^{\ell-i-1}m_i)F_{\ell,\ell-1}
\end{eqnarray*}
and
\[ \jnd^{\ell}_{\ell-1}(\alpha)=(p^{e+1}n_{\ell-1})^p+\sum_{i=0}^{\ell-1}u_iF_{\ell,i} \]
for some $u_0,\ldots,u_{\ell-1}\in\Z$, by Proposition \ref{PropF}.

\bigskip

It remains to show there is no ideal between $J_{\ell,0}(p^{e+2})\subsetneq J_{\ell,0}(p^{e+1})$. 
Suppose there is an ideal
\[ J_{\ell,0}(p^{e+2}) \subsetneq I\subseteq J_{\ell,0}(p^{e+1}). \]
Then by Proposition \ref{PropJ}, $I$ should contain an element
\[ \alpha=p^{e+1}n_{\ell}+\sum_{i=0}^{\ell-1}m_iF_{\ell,i} \]
in $J_{\ell,0}(p^{e+1})$, for some $n_{\ell},m_i\in\Z\ \ (0\le i\le \ell-1)$, which does not belong to $J_{\ell,0}(p^{e+2})$.
Then $I$ should contain
\[ p^{e+1}n_{\ell}=\alpha-\sum_{i=0}^{\ell-1}m_iF_{\ell,i}. \]
Since $\alpha$ is not in $J_{\ell,0}(p^{e+2})$, it follows that $p$ does not divide $n_{\ell}$. 
On the other hand, we have 
\[ p^{e+2}\in J_{\ell,0}(p^{e+2})\subseteq I. \]
These imply $p^{e+1}\in I$, which means $I=J_{\ell,0}(p^{e+1})$.

\end{proof}

\section{Structure of $\mathit{Spec}\, \Omega$}
Let $0\le\ell\le r$ be any integer. For any ideal $\I=[I_0,\ldots,I_{\ell}]\subseteq\Omega_{H_{\ell}}$, define $F(\I)$ to be the ideal $F(\I)=I_0$ of $R_0$. Since $I_0$ becomes prime if $\I$ is prime, this gives a map
\[ F\colon \Spec\,\Omega_G\rightarrow\Spec\, R_0. \]
To determine $\Spec\,\Omega$, it suffices to investigate each fiber of $F$ over the points of $\Spec\, R_0$.
Since $\Spec\, R_0=\Spec\,\Z=\{0\}\cup\{ (q)\mid\, q\ \text{is a prime integer}\}$, in the rest we will study the following three cases.
\begin{itemize}
\item[{\rm (i)}] Prime ideals over $(q)$, where $q$ is a prime integer different from $p$.
\item[{\rm (ii)}] Prime ideals over $(0)$.
\item[{\rm (iii)}] Prime ideals over $(p)$.
\end{itemize}

\subsection{Prime ideals over $q\ne p$}

Let $q$ be any prime integer different from $p$.

\begin{prop}\label{PropForLSq}
We have the following.
\begin{enumerate}
\item For any $k\ge 0$, we have
\[ S^k(q)=(q)\ \ \subseteq R_k, \]
and thus $\Scal^k(q)=[(q),(q),\ldots,(q)]$.
\item For any $1\le\ell\le r$ and $0\le k\le\ell$, we have
\[ L^{\ell-k}S^k(q)=J_{\ell,k}(q)\ \ \subseteq R_{\ell}. \]
\item For any $1\le\ell\le r$ and $0\le k\le\ell-2$, we have
\[ SL^{\ell-k-1}S^k(q)=L^{\ell-k}S^k(q). \]
\end{enumerate}
\end{prop}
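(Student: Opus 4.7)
The plan is to derive all three claims by straightforward inductions built on top of Proposition \ref{PropLSJ}; the serious computational content has been packaged there, so what remains for this proposition is essentially bookkeeping of indices. For (1) I will induct on $k$, taking the base case $S^{0}(q) = (q) \subseteq R_{0}$ to be $J_{0,0}(q)$. For the inductive step, given $S^{k-1}(q) = (q) = J_{k-1, k-1}(q) \subseteq R_{k-1}$, I invoke the ``$k = \ell - 1$'' branch of Proposition \ref{PropLSJ}(1)(ii) (with its $\ell$ set to $k$) to obtain $S(J_{k-1, k-1}(q)) = J_{k,k}(q) = (q) \subseteq R_{k}$. Writing out the resulting sequence for $\Scal^{k}(q)$ then yields $[(q), \ldots, (q)]$.

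For (2) I will induct on $\ell - k$, with base case $\ell = k$ supplied by (1): $L^{0} S^{k}(q) = S^{k}(q) = (q) = J_{k,k}(q)$. For the inductive step, assuming $L^{\ell - 1 - k} S^{k}(q) = J_{\ell - 1, k}(q)$, Proposition \ref{PropLSJ}(1)(i) (applicable because $k \le \ell - 1$) hands me
\[ L^{\ell - k} S^{k}(q) = L\bigl(J_{\ell - 1, k}(q)\bigr) = J_{\ell, k}(q), \]
closing the induction.

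For (3) I will substitute (2) on both sides: the left-hand side becomes $S(J_{\ell - 1, k}(q))$ and the right-hand side becomes $J_{\ell, k}(q)$. This equality is exactly the first branch of Proposition \ref{PropLSJ}(1)(ii), valid precisely when $k \le \ell - 2$, matching the hypothesis of (3). The only subtlety — not really a genuine obstacle — is the case split inside Proposition \ref{PropLSJ}(1)(ii): applying $S$ keeps the second index at $k$ when $k \le \ell - 2$, but jumps it up to $\ell$ when $k = \ell - 1$. This is exactly why (3) must exclude the boundary $k = \ell - 1$, and equally why the iteration in (1) stabilizes $S^{k}(q)$ at the simple form $(q)$ rather than producing something larger.
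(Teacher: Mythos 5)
Your proof is correct and fills in exactly the induction that the paper's one-line proof (``This follows from Proposition~\ref{PropLSJ}, by an induction on $k$ and $\ell$'') leaves implicit: part~(1) by induction on $k$ via the $k=\ell-1$ branch of Proposition~\ref{PropLSJ}(1)(ii), part~(2) by induction on $\ell-k$ via Proposition~\ref{PropLSJ}(1)(i) with part~(1) as base, and part~(3) by substituting part~(2) and reading off the $k\le\ell-2$ branch of Proposition~\ref{PropLSJ}(1)(ii). This matches the paper's intended route, and your closing remark about why the $k=\ell-1$ boundary must be excluded in~(3) correctly explains the necessity of the case split.
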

\begin{proof}
This follows from Proposition \ref{PropLSJ}, by an induction on $k$ and $\ell$.
\end{proof}

\begin{prop}\label{Propqqprime}
For any $0\le k\le r$,
\[ \Scal^k(q)=[(q),\ldots,(q)]\subseteq\Omega_{H_k} \]
is a prime ideal.
\end{prop}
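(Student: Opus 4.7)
The plan is to proceed by induction on $k$, using the inductive primality criterion $\Pcal(k)$ of Proposition \ref{PropEP}. The base case $k=0$ is just that $(q) \subseteq R_0 = \Z$ is prime, which holds because $q$ is a prime integer.

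For the inductive step, assume $\Scal^{k-1}(q) = [(q),\ldots,(q)] \subseteq \Omega_{H_{k-1}}$ is prime; by Proposition \ref{PropForLSq}(1) we already know $\Scal^k(q) = [(q),\ldots,(q)]$ is at least an ideal of $\Omega_{H_k}$, so it remains to verify $\Pcal(k)$. Given $0 \le i \le k$, $a \in L_k((q))$, and $b \in L_i((q)) \setminus (q)$ with $a \cdot \jnd^k_i(b) \in (q) \subseteq R_k$, the task is to show $a \in (q)$. By Proposition \ref{PropForLSq}(2), $L_j((q)) = J_{j,j-1}(q)$ for $j \ge 1$, so Proposition \ref{PropJ} lets me write $a = q\gamma + tF_{k,k-1}$ with $\gamma \in R_k$, $t \in \Z$, and (for $i \ge 1$) $b = q\beta + nF_{i,i-1}$ with $q \nmid n$; for $i=0$ one simply has $b \in \Z$ with $q \nmid b$. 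The claim $a \in (q)$ becomes $q \mid t$.

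The computation goes through the ring homomorphism $\psi_\ell : R_\ell \to \Z$ sending $X_{\ell,j} \mapsto 0$ for $j < \ell$ and $X_{\ell,\ell} \mapsto 1$ (i.e., the fixed-point character $\Phi^{H_\ell}$). Two identities drive the argument. First, Proposition \ref{PropF}(1) (specifically $X_{k,j}F_{k,k-1} = 0$ for $j \le k-1$) yields the absorption identity $F_{k,k-1} \cdot \beta = \psi_k(\beta)\, F_{k,k-1}$ for any $\beta \in R_k$. Second, applying $\psi_k$ to the explicit formula for $\jnd^k_{k-1}(\alpha)$ in Corollary \ref{Corjnd} annihilates every $X_{k,j}$-term with $j<k$, leaving only the leading $m_{k-1} X_{k,k}$, and so $\psi_k \circ \jnd^k_{k-1} = \psi_{k-1}$; iterating along the factorization $\jnd^k_i = \jnd^k_{k-1}\circ\cdots\circ\jnd^{i+1}_i$ (functoriality of $T_\bullet$) gives $\psi_k(\jnd^k_i(b)) = \psi_i(b)$. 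Combining these, $a \cdot \jnd^k_i(b) \equiv t\,\psi_i(b)\, F_{k,k-1} \pmod{(q)}$, and membership in $(q) = J_{k,k}(q)$, characterized by all $F$-basis coefficients lying in $q\Z$, forces $t \cdot \psi_i(b) \in q\Z$. A direct evaluation gives $\psi_i(b) \equiv -np \pmod q$ for $i \ge 1$ and $\psi_0(b) = b$ for $i=0$, each nonzero modulo $q$ since $\gcd(p,q) = 1$ together with $q \nmid n$ (resp.\ $q \nmid b$). Therefore $q \mid t$, whence $a \in (q)$, completing $\Pcal(k)$.

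The main obstacle is establishing the two identities $F_{k,k-1} \cdot \beta = \psi_k(\beta) F_{k,k-1}$ and $\psi_k \circ \jnd^k_{k-1} = \psi_{k-1}$; both are direct readings of Proposition \ref{PropF}(1) and Corollary \ref{Corjnd} respectively, so there is no deep step, only careful bookkeeping. The only minor subtleties are the boundary case $i=0$ (where $L_0$ is $R_0$ by convention and $\psi_0 = \id_\Z$) and the need to track the $F$-basis coefficients to see that everything collapses onto the single $F_{k,k-1}$-component.
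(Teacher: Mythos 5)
Your proof is correct and follows essentially the same route as the paper: induction on $k$ via $\Pcal(k)$, writing $a = q\gamma + tF_{k,k-1}$, reducing $a\cdot\jnd^k_i(b)$ modulo $(q)$ using the absorption $\beta F_{k,k-1} = \psi_k(\beta)F_{k,k-1}$, and concluding $q\mid t$ from $q\ne p$ and $q\nmid n$. Your packaging via the mark homomorphism $\psi_k$ (with $\psi_k\circ\jnd^k_i=\psi_i$) is a clean way to state what the paper extracts directly from Corollary \ref{Corjnd} as the $X_{k,k}$-coefficient $qn_i'-pn_{i-1}$; the underlying computation is the same.
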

\begin{proof}
We show this by an induction on $k$.
If $k=0$, then $(q)\subseteq\Omega_e\cong\Z$ is indeed a prime ideal.

When $k\ge 1$, suppose $\Scal^{k-1}(q)\subseteq\Omega_{H_{k-1}}$ is prime.
It suffices to show $\Scal^k(q)=[(q),\ldots,(q)]$ satisfies $\Pcal(k)$. Namely, we only have to show the following.
\begin{claim}\label{Claimqqprime}
For any $0\le i\le k$, any $a\in L_k(q)$ and any $b\in L_i(q)\setminus (q)$,
\[ a\cdot\jnd^k_i(b)\in(q)\ \ \Longrightarrow\ \ a\in(q) \]
holds.
\end{claim}
\begin{proof}
Since $L_k(q)=J_{k,k-1}(q)$, 
$a$ can be written as
\[ a=q\beta+mF_{k,k-1}\quad(\beta\in R_k,m\in\Z) \]
by Proposition \ref{PropJ}.

Similarly by Proposition \ref{PropJ}, $b\in L_i(q)=J_{i,i-1}(q)$ can be written in the form
\begin{eqnarray*}
b&=&q(n_i+\sum_{j=0}^{i-2}n_jF_{i,j})+n_{i-1}F_{i,i-1}\\
&=&q(n_i-\sum_{j=0}^{i-2}n_jp^{i-j})-pn_{i-1}+\sum_{j=0}^{i-2}n_jqX_{i,j}+n_{i-1}X_{i,i-1}
\end{eqnarray*}
for some $n_j\in\Z\ \ (0\le j\le i)$.
If we put $n_i\ppr=n_i-\displaystyle\sum_{j=0}^{i-2}n_jp^{i-j},$ then we have
\[ \jnd^k_i(b)=(qn_i\ppr-pn_{i-1})+\sum_{t=0}^{k-1}u_tX_{k,t} \]
for some $u_t\in\Z\ \ (0\le t<k)$, by Corollary \ref{Corjnd}.

Now we have
\[ a\cdot\jnd^k_i(b)=q(\beta\cdot\jnd^k_i(b))+(qn_i\ppr-pn_{i-1})mF_{k,k-1}. \]
This satisfies $a\cdot\jnd^k_i(b)\in (q)$ if and only if
\[ pn_{i-1}mF_{k,k-1}\in(q), \]
which is equivalent to $pn_{i-1}m\in q\Z$ by Proposition \ref{PropF}.
Since $b$ is not in $(q)$, $n_{i-1}$ is not divisible by $q$. Thus it follows $m\in q\Z$, which means $a\in(q)$.
\end{proof}
\end{proof}

\begin{prop}\label{Propq}
In $\Omega_{H_{\ell}}$, there are exactly $(\ell+1)$ ideals over $(q)\subseteq R_0$
\begin{equation}\label{PrimeOverq}
\Scal^{\ell}(q)\subsetneq \Lcal\Scal^{\ell-1}(q)\subsetneq\cdots\subsetneq\Lcal^{\ell-k}\Scal^k(q)\subsetneq\cdots\subsetneq\Lcal^{\ell}(q),
\end{equation}
all of which are prime.
\end{prop}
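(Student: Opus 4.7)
The plan is to prove the proposition by a two-pronged strategy: first verify directly that the $\ell+1$ listed ideals are distinct primes over $(q)$, then show by induction on $\ell$ that any prime ideal over $(q)$ must appear in this list. The induction engine is Corollary \ref{CorPrimeRestr} together with the extension principle of Proposition \ref{PropLandS}: given a prime $\I \subseteq \Omega_{H_\ell}$ over $(q)$, its restriction $\I|_{H_{\ell-1}}$ is a prime over $(q)$ in $\Omega_{H_{\ell-1}}$, and the top entry $I_\ell$ must satisfy $S(I_{\ell-1}) \subseteq I_\ell \subseteq L(I_{\ell-1})$. So the analysis reduces to computing, for each prime obtained from the inductive hypothesis, how much room exists between $S$ and $L$.

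For the forward direction, Proposition \ref{Propqqprime} establishes that $\Scal^k(q)$ is prime, and repeated application of Corollary \ref{CorPrimePrime} yields that every $\Lcal^{\ell-k}\Scal^k(q)$ is prime. To see the inclusions are strict, I would use Proposition \ref{PropForLSq} to identify the $\ell$-th entry of $\Lcal^{\ell-k}\Scal^k(q)$ as $J_{\ell,k}(q)$ (with the convention $J_{\ell,\ell}(q) = (q)$), so that the chain at position $\ell$ reads $(q) = J_{\ell,\ell}(q) \subsetneq J_{\ell,\ell-1}(q) \subsetneq \cdots \subsetneq J_{\ell,0}(q)$; each strict inclusion follows from Proposition \ref{PropJ}, as $F_{\ell,k}$ lies in $J_{\ell,k}(q)$ but not in $J_{\ell,k+1}(q)$ (one cannot express $F_{\ell,k}$ using only $q$-multiples of the basis elements $\{1,F_{\ell,0},\ldots,F_{\ell,k-1}\}$ together with $F_{\ell,k+1},\ldots,F_{\ell,\ell-1}$).

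For the converse, let $\I = [I_0,\ldots,I_\ell]$ be prime over $(q)$. By the inductive hypothesis, $\I|_{H_{\ell-1}} = \Lcal^{\ell-1-k'}\Scal^{k'}(q)$ for some $0 \le k' \le \ell-1$, so $I_{\ell-1} = J_{\ell-1,k'}(q)$. The main obstacle is to rule out intermediate $I_\ell$ between $S(I_{\ell-1})$ and $L(I_{\ell-1})$, and this is precisely where the detailed structural results of Proposition \ref{PropLSJ} and Proposition \ref{PropForLSq} are needed. When $k' \le \ell-2$, Proposition \ref{PropForLSq}(3) yields $S(I_{\ell-1}) = L(I_{\ell-1}) = J_{\ell,k'}(q)$, so $I_\ell$ is forced to equal $J_{\ell,k'}(q)$ and $\I = \Lcal^{\ell-k'}\Scal^{k'}(q)$. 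When $k' = \ell-1$, Proposition \ref{PropForLSq}(1) gives $S((q)) = (q)$ and Proposition \ref{PropForLSq}(2) gives $L((q)) = J_{\ell,\ell-1}(q)$, so $I_\ell$ lies strictly between $J_{\ell,\ell}(q) = (q)$ and $J_{\ell,\ell-1}(q)$. The last clause of Proposition \ref{PropLSJ}(1) guarantees no proper intermediate ideal exists, so $I_\ell \in \{(q),\, J_{\ell,\ell-1}(q)\}$, giving $\I = \Scal^\ell(q)$ or $\I = \Lcal\Scal^{\ell-1}(q)$. This exhausts every prime over $(q)$ and matches the list in \eqref{PrimeOverq}.
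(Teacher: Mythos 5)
Your approach mirrors the paper's — induction on $\ell$, using Propositions \ref{PropLSJ} and \ref{PropForLSq} to pin $I_\ell$ between $S(I_{\ell-1})$ and $L(I_{\ell-1})$, and Proposition \ref{Propqqprime} plus Corollary \ref{CorPrimePrime} for primeness — but there is a genuine gap in the scope of what you prove. The proposition asserts that there are \emph{exactly} $\ell+1$ ideals over $(q)$ (not just prime ideals), all of which turn out to be prime. You only classify the \emph{prime} ideals over $(q)$: your induction takes a prime $\I$, invokes Corollary \ref{CorPrimeRestr} to conclude $\I|_{H_{\ell-1}}$ is prime, and then applies the inductive hypothesis (which you phrased as a classification of primes). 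This establishes the weaker statement that the prime ideals over $(q)$ are precisely the $\ell+1$ listed ones, but it does not rule out a non-prime ideal over $(q)$ hiding outside the list, so it does not prove "there are exactly $(\ell+1)$ ideals over $(q)$" as written.

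The fix is simple and is exactly what the paper does: run the same induction for an arbitrary ideal $\I = [I_0,\ldots,I_\ell]$ over $(q)$. The inductive hypothesis — "every ideal of $\Omega_{H_{\ell-1}}$ over $(q)$ equals some $\Lcal^{\ell-1-k}\Scal^k(q)$" — applies to $\I|_{H_{\ell-1}}$ regardless of whether $\I$ is prime, so Corollary \ref{CorPrimeRestr} is never needed. The $S\subseteq I_\ell\subseteq L$ squeeze and the no-intermediate-ideal clause of Proposition \ref{PropLSJ}(1) then force $I_\ell$ exactly as in your argument. Everything else in your write-up (the identification of the $\ell$-th entries with $J_{\ell,k}(q)$, the strictness of the chain via $F_{\ell,k}\in J_{\ell,k}(q)\setminus J_{\ell,k+1}(q)$, and the two cases $k'\le\ell-2$ vs.\ $k'=\ell-1$) is correct and matches the paper.
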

\begin{proof}
By Proposition \ref{Propqqprime} and Corollary \ref{CorPrimePrime}, these are prime.

We show the proposition by an induction on $\ell$. Suppose we have done for $\ell-1$, and take any ideal $\I=[I_0,\ldots,I_{\ell}]\subseteq\Omega_{H_{\ell}}$ over $(q)\subseteq R_0$.
By the assumption of the induction, there exists some $0\le k\le\ell-1$ satisfying
\[ \I|_{H_{\ell-1}}=\Lcal^{\ell-k-1}\Scal^k(q). \]

If $k\le\ell-2$, then $I_{\ell}$ should satisfy
\[ I_{\ell}=SL^{\ell-k-1}S^k(q)=L^{\ell-k}S^k(q)=J_{\ell,k}(q) \]
by Proposition \ref{PropForLSq}, and thus $\I=\Lcal^{\ell-k}\Scal^k(q)$.

If $k=\ell-1$, then $I_{\ell}$ should satisfy
\[ S^{\ell}(q)\subseteq I_{\ell}\subseteq LS^{\ell-1}(q). \]
Since there is no ideal between $S^{\ell}(q)=(q)$ and $LS^{\ell-1}(q)=J_{\ell,\ell-1}(q)$ by Proposition \ref{PropLSJ}, we have
\[ I_{\ell}=S^{\ell}(q)\quad\text{or}\quad LS^{\ell-1}(q), \]
namely
\[ \I=\Scal^{\ell}(q)\quad\text{or}\quad\Lcal\Scal^{\ell-1}(q). \]
\end{proof}

\subsection{Prime ideals over $p$}
\begin{prop}\label{PropSLp}
Consider an ideal in $R_{\ell}$, obtained from $(p)\subseteq R_0$ by an iterated application of $L$ and $S$
\[ S^{a_1}L^{b_1}S^{a_2}L^{b_2}\cdots S^{a_s}L^{b_s}(p)\subseteq R_{\ell} \]
for $a_i,b_i\in\N_{\ge0}$ satisfying $\displaystyle\sum_{i=1}^sa_i+\displaystyle\sum_{i=1}^sb_i=\ell$.
Then this depends only on $k=\displaystyle\sum_{i=1}^sa_i\ \ (0\le k\le \ell)$,
and is equal to $J_{\ell,0}(p^{k+1})$. Namely we have
\begin{eqnarray*}
S^{a_1}L^{b_1}S^{a_2}L^{b_2}\cdots S^{a_s}L^{b_s}(p)&=&S^kL^{\ell-k}(p)\\
&=&L^{\ell-k}S^k(p)\ =\ J_{\ell,0}(p^{k+1}).
\end{eqnarray*}
\end{prop}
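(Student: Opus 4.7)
The plan is a short induction on the total length $\ell = \sum_i a_i + \sum_i b_i$, with Proposition~\ref{PropLSJ}(2) as the only real input. Before inducting, I would reformulate the claim in a form that is strictly more general but easier to iterate: view the composite $S^{a_1}L^{b_1}\cdots S^{a_s}L^{b_s}$ as a length-$\ell$ word $W_\ell W_{\ell-1}\cdots W_1$ in the letters $L$ and $S$, applied to $(p)\subseteq R_0$ with $W_1$ applied first. The claim to prove is that for \emph{any} such word,
$$W_\ell W_{\ell-1}\cdots W_1(p) \;=\; J_{\ell,0}(p^{k+1})\subseteq R_\ell,$$
where $k$ is the number of $S$'s among the $W_i$. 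Since any word can be grouped into blocks of consecutive equal letters, this is equivalent to the stated proposition; but the new phrasing exposes the dependence on $k$ alone and makes the recursion obvious.

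The base case $\ell = 0$ is just $(p) = J_{0,0}(p^{1})$. For the inductive step, apply the hypothesis to the subword $W_{\ell-1}\cdots W_1$ to get
$$W_{\ell-1}\cdots W_1(p) \;=\; J_{\ell-1,0}(p^{k'+1}),$$
where $k'$ is the number of $S$'s among $W_1,\ldots,W_{\ell-1}$. Proposition~\ref{PropLSJ}(2) then finishes it: if $W_\ell = L$, the exponent is preserved and we land in $J_{\ell,0}(p^{k'+1})$ with $k = k'$; if $W_\ell = S$, the exponent is incremented and we land in $J_{\ell,0}(p^{k'+2})$ with $k = k'+1$. In either case the result is $J_{\ell,0}(p^{k+1})$, as required. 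The two specific arrangements $S^{k}L^{\ell-k}(p)$ and $L^{\ell-k}S^{k}(p)$ appearing in the statement are particular instances of such a word with $k$ many $S$'s, so the asserted chain of equalities is immediate from the uniform formula.

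The only subtle point is verifying that the hypothesis of Proposition~\ref{PropLSJ}(2) is met at every step, namely that each intermediate ideal is of the form $J_{\cdot,0}(p^{\cdot})$ with first subscript equal to $0$. This is automatic: both alternatives in~\ref{PropLSJ}(2) return an ideal whose first subscript is again $0$, so the shape is preserved throughout the induction. Accordingly I do not foresee any real obstacle; the proposition is essentially a bookkeeping consequence of~\ref{PropLSJ}(2), with the reformulation as a word over $\{L,S\}$ doing the organizational work.
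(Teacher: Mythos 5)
Your proposal is correct and follows the same route as the paper: an induction on $\ell$ driven entirely by Proposition~\ref{PropLSJ}(2), where each step either preserves (under $L$) or increments (under $S$) the exponent in $J_{\ell,0}(p^{\cdot})$. The reformulation as a word over $\{L,S\}$ is a nice organizational touch, but it does not change the substance — the paper simply states the induction more tersely.
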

\begin{proof}
By an induction on $\ell$, this immediately follows from Proposition \ref{PropLSJ}.
%
%
\end{proof}


\begin{cor}\label{CorLSp}
For any $1\le\ell\le r$, any ideal $\I=[I_0=(p),I_1,\ldots,I_{\ell}]$ of $\Omega_{H_{\ell}}$ over $(p)$ satisfies
\begin{enumerate}
\item $I_i=S(I_{i-1})$ or $I_i=L(I_{i-1})$
\item $I_i=J_{i,0}(p^{k+1})$ for some $0\le k\le i$
\end{enumerate}
for each $1\le i\le\ell$.
\end{cor}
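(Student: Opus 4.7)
The plan is a straightforward induction on $i$, exploiting the ``no ideal in between'' clause of Proposition~\ref{PropLSJ}(2) to force $I_i$ to equal one of the two extremes.

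First I would observe that conditions (1) and (2) are essentially proved together: if at each step $I_i\in\{L(I_{i-1}),S(I_{i-1})\}$ and $I_{i-1}=J_{i-1,0}(p^{k+1})$, then Proposition~\ref{PropLSJ}(2) immediately identifies $I_i$ as either $J_{i,0}(p^{k+1})$ or $J_{i,0}(p^{k+2})$, which has the required form. So the whole corollary reduces to showing (1).

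The base case $i=1$ is handled directly. By Proposition~\ref{PropLandS}, any ideal $I_1\subseteq R_1$ satisfying $\Ical(1)$ with $I_0=(p)$ must lie between $S((p))$ and $L((p))$. Applying Proposition~\ref{PropLSJ}(2) with $\ell=1$, $k=0$, $e=0$ gives $S((p))=J_{1,0}(p^2)$ and $L((p))=J_{1,0}(p)$, together with the statement that no ideal lies strictly between them. Hence $I_1$ must equal one of these two, which is exactly the content of (1) and (2) for $i=1$.

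For the inductive step, assume the conclusion holds up to index $i-1$, so in particular $I_{i-1}=J_{i-1,0}(p^{k+1})$ for some $0\le k\le i-1$. Since $[I_0,\dots,I_i]$ is an ideal of $\Omega_{H_i}$, Proposition~\ref{PropLandS} forces
\[
S(I_{i-1})\;\subseteq\; I_i\;\subseteq\; L(I_{i-1}).
\]
Now invoke Proposition~\ref{PropLSJ}(2) at level $\ell=i$ applied to $J_{i-1,0}(p^{k+1})$: this yields
\[
L(I_{i-1})=J_{i,0}(p^{k+1}),\qquad S(I_{i-1})=J_{i,0}(p^{k+2}),
\]
together with the key ``no ideal strictly between $J_{i,0}(p^{k+2})\subsetneq J_{i,0}(p^{k+1})$''. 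Consequently $I_i$ coincides with one of these two ideals, establishing (1) and (2) at step $i$ and completing the induction.

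The main (and only) obstacle is ensuring that the induction really applies, i.e.\ that one is permitted to drop directly from arbitrary $I_i$ to one of the two canonical extensions; this is exactly what the ``no intermediate ideal'' portion of Proposition~\ref{PropLSJ}(2) buys us, so the proof is essentially a bookkeeping assembly of results already in hand.
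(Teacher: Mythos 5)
Your proposal is correct and follows essentially the same inductive argument as the paper: both invoke Proposition~\ref{PropLandS} to pin $I_i$ between $S(I_{i-1})$ and $L(I_{i-1})$, then use Proposition~\ref{PropLSJ}(2) to identify these as $J_{i,0}(p^{k+2})$ and $J_{i,0}(p^{k+1})$ with nothing strictly in between. The only cosmetic difference is that you spell out the base case $i=1$ explicitly, while the paper leaves it implicit.
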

\begin{proof}
We proceed by an induction on $\ell$. Suppose we have shown for $\ell-1$. Then, any ideal $\I=[(p),I_1,\ldots,I_{\ell}]$ should satisfy
\[ I_{\ell-1}=J_{\ell-1,0}(p^{k+1}) \]
for some $0\le k\le\ell-1$.
Then by Proposition \ref{PropLSJ}, $I_{\ell}$ should lie between $S(I_{\ell-1})=J_{\ell,0}(p^{k+2})$ and $L(I_{\ell-1})=J_{\ell,0}(p^{k+1})$, while there is no ideal between them.
Thus $I_{\ell}$ should agree with $S(I_{\ell-1})=J_{\ell,0}(p^{k+2})$ or $L(I_{\ell-1})=J_{\ell,0}(p^{k+1})$.
\end{proof}

\begin{prop}\label{Propp}
Among all the ideals of $\Omega_{H_{\ell}}$ over $(p)$ determined in Corollary \ref{CorLSp},
\[ \Lcal^{\ell}(p)=[(p),J_{1,0}(p),\ldots,J_{\ell,0}(p)] \]
is the only one prime ideal.
\end{prop}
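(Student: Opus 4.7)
The plan is to split the claim into two halves: first show that $\Lcal^{\ell}(p)$ is indeed prime, and second show that every other ideal over $(p)$ listed in Corollary \ref{CorLSp} fails to satisfy $\Pcal(k)$ at some index $k$.

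For the first half, I would argue by induction on $\ell$. The base case $\ell=0$ is the classical fact that $(p)\subseteq R_0\cong\Z$ is a prime ideal. For the inductive step, assuming $\Lcal^{\ell-1}(p)\subseteq\Omega_{H_{\ell-1}}$ is prime, Corollary \ref{CorPrimePrime} immediately gives that $\Lcal^{\ell}(p)=\Lcal(\Lcal^{\ell-1}(p))\subseteq\Omega_{H_{\ell}}$ is prime.

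For the converse, suppose $\I=[(p),I_1,\ldots,I_{\ell}]$ is an ideal over $(p)$ distinct from $\Lcal^{\ell}(p)$. By Corollary \ref{CorLSp}, each $I_i$ equals $J_{i,0}(p^{k_i+1})$ for some $0\le k_i\le i$, with $k_0=0$ and the sequence $k_0,k_1,\ldots,k_{\ell}$ increasing by at most $1$ at each step (corresponding to the choice $L$ or $S$). Since $\I\ne\Lcal^{\ell}(p)$, there is a smallest index $k$ with $k_k\ge1$; by minimality $k_{k-1}=0$ and $I_k=S(I_{k-1})$, so $I_{k-1}=J_{k-1,0}(p)$ and $I_k=J_{k,0}(p^2)$ by Proposition \ref{PropLSJ}. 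By Corollary \ref{CorPrimeRestr}, it suffices to show that $\I|_{H_k}$ is not prime.

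To witness the failure of $\Pcal(k)$, take $i=k$ and $a=b=p$. By Proposition \ref{PropLSJ}, $L_k(I_{k-1})=L(J_{k-1,0}(p))=J_{k,0}(p)$ contains both $a$ and $b$. The description in Proposition \ref{PropJ} shows that $p\in J_{k,0}(p)$ but $p\notin J_{k,0}(p^2)=I_k$, so $b\in L_k(I_{k-1})\setminus I_k$. Since $\jnd^k_k=\id$, we have $a\cdot\jnd^k_k(b)=p^2\in I_k$ while $a=p\notin I_k$, directly contradicting $\Pcal(k)$. The only subtle step is the bookkeeping required to identify the first index $k$ at which $\I$ deviates from $\Lcal^{\ell}(p)$; once that index is pinned down, the trivial witnesses $a=b=p$ make the failure of primeness immediate.
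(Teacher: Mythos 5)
Your proposal is correct and follows essentially the same route as the paper: primeness of $\Lcal^{\ell}(p)$ via Corollary \ref{CorPrimePrime}, and for the converse, identifying the first index $k$ at which $I_k=S(I_{k-1})$ (so $I_{k-1}=J_{k-1,0}(p)$, $I_k=J_{k,0}(p^2)$) and exhibiting $a=b=p$ as a witness that $\Pcal(k)$ fails. The detour through Corollary \ref{CorPrimeRestr} is harmless but unnecessary, since $\Pcal(k)$ only involves $I_0,\ldots,I_k$; otherwise the argument matches the paper's proof.
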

\begin{proof}
$\Lcal^{\ell}(p)$ is prime by Corollary \ref{CorPrimePrime}.
Remark that any other ideal $\I=[I_0=(p),I_1,\ldots,I_{\ell}]$ should satisfy $I_k=S(I_{k-1})$ for some $1\le k\le\ell$. If we take the smallest such $k$, then it satisfies
\[ I_{k-1}=J_{k-1,0}(p)\quad\text{and}\quad I_k=J_{k,0}(p^2). \]
Then $\Pcal(k)$ fails for $\I$. In fact, for
\[ a=b=p\in L(I_{k-1})=J_{k,0}(p), \]
we have $ab=p^2\in J_{k,0}(p^2)$, while neither $a$ nor $b$ belong to $I_k$.
\end{proof}

\subsection{Prime ideals over $0$}

\begin{prop}\label{PropLS0}
We have the following.
\begin{enumerate}
\item For any $k\ge 0$, we have
\[ S^k(0)=(0). \]
\item For any $1\le\ell\le r$ and $0\le k\le\ell$, we have
\[ L^{\ell-k}S^k(0)=J_{\ell,k}(0)\ \subseteq R_{\ell}. \]
\item For any $1\le\ell\le r$ and $0\le k\le\ell-2$, we have
\[ SL^{\ell-k-1}S^k(0)=L^{\ell-k}S^k(0). \]
\end{enumerate}
\end{prop}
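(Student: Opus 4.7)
The plan is to follow the proof pattern of Proposition \ref{PropForLSq} verbatim, replacing the prime $q$ by $0$. The whole argument is powered by Proposition \ref{PropLSJ}(1), whose statement already treats $n=0$ and $n=q$ (prime, $q\ne p$) uniformly, together with the identification $(0)=J_{k,k}(0)$ coming from Definition \ref{DefJ}.

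First I would dispose of part (1) by induction on $k$. The base case $k=0$ is the tautology $S^{0}(0)=(0)$. For the inductive step, the identities $\ind^{k}_{k-1}(0)=\jnd^{k}_{k-1}(0)=0$ force $S((0))=(0)$; combined with the induction hypothesis $S^{k-1}(0)=(0)$, this gives $S^{k}(0)=(0)$.

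For part (2), I would use part (1) together with Definition \ref{DefJ} to rewrite $S^{k}(0)=(0)=J_{k,k}(0)$. When $k=\ell$, the claim reduces to $S^{\ell}(0)=J_{\ell,\ell}(0)$, which is already in hand. When $k<\ell$, I would iterate Proposition \ref{PropLSJ}(1)(i) with $n=0$: each application sends $J_{m-1,k}(0)$ to $J_{m,k}(0)$ and is valid so long as $k\le m-1$, so iterating from $m=k+1$ up to $m=\ell$ transforms $J_{k,k}(0)$ into $J_{\ell,k}(0)$. This yields $L^{\ell-k}S^{k}(0)=J_{\ell,k}(0)$.

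Finally, part (3) follows by combining part (2) with Proposition \ref{PropLSJ}(1)(ii). Part (2) identifies $L^{\ell-k-1}S^{k}(0)$ with $J_{\ell-1,k}(0)$; the hypothesis $k\le\ell-2$ places us in the first case of Proposition \ref{PropLSJ}(1)(ii), which with $n=0$ gives $S(J_{\ell-1,k}(0))=J_{\ell,k}(0)$; a second appeal to part (2) rewrites $J_{\ell,k}(0)$ as $L^{\ell-k}S^{k}(0)$. The main \emph{obstacle} is essentially absent: the entire proposition is a bookkeeping consequence of the $n=0$ case of Proposition \ref{PropLSJ}(1) under induction on the indices, which is also why the analogous statement for $q$ in Proposition \ref{PropForLSq} was recorded there as a one-line appeal to Proposition \ref{PropLSJ}.
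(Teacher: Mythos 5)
Your proof is correct and follows the same route the paper intends: the paper's own proof is a one-liner ("Similarly to Proposition \ref{PropForLSq}, this follows from Proposition \ref{PropLSJ} by an induction"), and your write-up is just that inductive bookkeeping made explicit, with the direct check $\ind^{\ell}_{\ell-1}(0)=\jnd^{\ell}_{\ell-1}(0)=0$ handling part (1) exactly as the paper itself observes inside the proof of Proposition \ref{PropLSJ}.
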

\begin{proof}
Similarly to Proposition \ref{PropForLSq}, this follows from Proposition \ref{PropLSJ} by an induction.
\end{proof}

\begin{prop}\label{Prop00prime}
For any $0\le k\le r$,
\[ \Scal^k(0)=[0,\ldots,0]=0\subseteq\Omega_{H_k} \]
is prime.
\end{prop}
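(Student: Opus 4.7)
The strategy is to apply Proposition \ref{PropEP}, which reduces primality of $\Scal^k(0) = [0,\ldots,0]$ to verifying condition $\Pcal(k')$ for each $0 \le k' \le k$, in the situation where all the $I_j$ are zero. The base case $k' = 0$ is immediate, since $R_0 \cong \Z$ is an integral domain. For $k' \ge 1$, I would first pin down $L_{k'}(0)$ explicitly: Proposition \ref{PropF}(2) gives a clean formula for $\res^{k'}_{k'-1}$ in the $F$-basis, sending $m_{k'} + \sum_{j=0}^{k'-1} m_j F_{k',j}$ to $m_{k'} + p\sum_{j=0}^{k'-2} m_j F_{k'-1,j}$, which vanishes iff $m_{k'} = 0$ and $m_j = 0$ for every $j \le k'-2$. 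Thus $L_{k'}(0) = \Z\cdot F_{k',k'-1}$, and similarly $L_i(0) = \Z\cdot F_{i,i-1}$ for every $i \ge 1$.

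The heart of the argument is the product $a\cdot \jnd^{k'}_i(b)$ for $a = m F_{k',k'-1}\in L_{k'}(0)$ and $b \in L_i(0)\setminus\{0\}$. Expanding $\jnd^{k'}_i(b) = w_{k'} + \sum_{j=0}^{k'-1} w_j X_{k',j}$ in the $X$-basis, both branches of Proposition \ref{PropF}(1) (taking $i = k'-1$ there) collapse to give $F_{k',k'-1}\cdot X_{k',j} = 0$ for every $0 \le j \le k'-1$, while $F_{k',k'-1}\cdot X_{k',k'} = F_{k',k'-1}$. All cross terms therefore die, leaving
\[ a\cdot \jnd^{k'}_i(b) \;=\; m\,w_{k'}\,F_{k',k'-1}. \]

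It then suffices to observe that $w_{k'} \ne 0$. By Corollary \ref{Corjnd}, the coefficient of $X_{k',k'} = 1$ in $\jnd^{k'}_i(b)$ equals the coefficient of $X_{i,i} = 1$ in $b$; this is $n$ when $i = 0$ (where $b = n \in \Z\setminus\{0\}$), and $-np$ when $i \ge 1$ (where $b = n F_{i,i-1} = n X_{i,i-1} - np$ with $n \ne 0$). Both are nonzero. Since $F_{k',k'-1}$ is a $\Z$-basis element of the free module $R_{k'}$, the vanishing $m w_{k'} F_{k',k'-1} = 0$ forces $m = 0$, so $a = 0$, which verifies $\Pcal(k')$.

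The main obstacle I anticipate is just the two-basis bookkeeping: the $F$-basis is what lets one identify $L_{k'}(0)$ cleanly via Proposition \ref{PropF}(2), while the $X$-basis is what Corollary \ref{Corjnd} delivers the multiplicative transfer in. Once one notices that $F_{k',k'-1}$ annihilates every $X_{k',j}$ with $j < k'$, the entire primality condition collapses to a single nonvanishing check in $\Z$.
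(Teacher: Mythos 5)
Your proposal is correct and follows the same route the paper has in mind: the paper's proof simply remarks that the argument is analogous to that of Proposition \ref{Propqqprime} (and also cites a general result that $0\subseteq\Omega_G$ is prime), and your argument is precisely that adaptation worked out — identifying $L_{k'}(0)=\Z F_{k',k'-1}$ via Proposition \ref{PropF}(2), exploiting that $F_{k',k'-1}$ annihilates $X_{k',j}$ for $j<k'$ by Proposition \ref{PropF}(1), and reading off the nonvanishing leading $X_{k',k'}$-coefficient of $\jnd^{k'}_i(b)$ from Corollary \ref{Corjnd}, exactly as the proof of Claim \ref{Claimqqprime} does with $q$ replaced by $0$.
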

\begin{proof}
This is shown in a similar way as in Proposition \ref{Propqqprime}. More generally, for any finite group $G$, it was shown that the zero ideal $0\subseteq\Omega_G$ is prime (Theorem 4.40 in \cite{N_Ideal}).
\end{proof}

\begin{lem}\label{LemIdeal0}
For any $1\le\ell\le r$ and $0\le k\le\ell-1$, any ideal $I$ satisfying
\[ J_{\ell,k+1}(0)\subseteq I\subseteq J_{\ell,k}(0) \]
is of the form
\begin{eqnarray*}
I&=&J_{\ell,k+1}(0)+(nF_{\ell,k})\\
&=&(nF_{\ell,k},F_{\ell,k+1},F_{\ell,k+2},\ldots,F_{\ell,\ell-1})
\end{eqnarray*}
for some $n\in\Z$.
\end{lem}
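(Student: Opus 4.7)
The plan is to work in the quotient $J_{\ell,k}(0)/J_{\ell,k+1}(0)$ and show that the $R_{\ell}$-module structure there reduces to the ordinary $\mathbb{Z}$-module structure on $\mathbb{Z}$. By Proposition~\ref{PropJ}, every element of $J_{\ell,k}(0)$ can be written uniquely in the form $\sum_{i=k}^{\ell-1}n_iF_{\ell,i}$, and modulo $J_{\ell,k+1}(0)=(F_{\ell,k+1},\ldots,F_{\ell,\ell-1})$ this representative collapses to $n_kF_{\ell,k}$. Hence as an abelian group $J_{\ell,k}(0)/J_{\ell,k+1}(0)$ is infinite cyclic, with generator the class of $F_{\ell,k}$.

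Next I would use Proposition~\ref{PropF}(1) to compute the induced action of $R_{\ell}$ on this cyclic quotient. On the generators $X_{\ell,j}$ of $R_{\ell}$ one has $X_{\ell,j}\cdot F_{\ell,k}=p^{\ell-j}F_{\ell,k}-p^{\ell-k}F_{\ell,j}$ for $j\ge k$ and $X_{\ell,j}\cdot F_{\ell,k}=0$ for $j<k$; in either case the term $p^{\ell-k}F_{\ell,j}$ lies in $J_{\ell,k+1}(0)$ (for $j=k$ the two terms simply cancel, and for $j>k$ the class $F_{\ell,j}$ itself is zero in the quotient). So modulo $J_{\ell,k+1}(0)$, $X_{\ell,j}$ acts on $F_{\ell,k}$ as multiplication by $p^{\ell-j}$ when $j>k$ and by $0$ when $j\le k$. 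Since $X_{\ell,\ell}=1$ acts as multiplication by $1$, the action of $R_{\ell}$ on the quotient factors through a surjective ring homomorphism $\varphi\colon R_{\ell}\twoheadrightarrow\mathbb{Z}$, identifying $J_{\ell,k}(0)/J_{\ell,k+1}(0)$ with $\mathbb{Z}$ as $R_{\ell}$-modules via $F_{\ell,k}\leftrightarrow 1$.

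With this identification, $R_{\ell}$-submodules of the quotient coincide with $\mathbb{Z}$-submodules of $\mathbb{Z}$, and therefore every ideal $I$ sandwiched between $J_{\ell,k+1}(0)$ and $J_{\ell,k}(0)$ satisfies $I/J_{\ell,k+1}(0)=n\mathbb{Z}$ for some $n\in\mathbb{Z}$. Lifting yields $I=J_{\ell,k+1}(0)+\mathbb{Z}\cdot nF_{\ell,k}$. Finally, the $R_{\ell}$-ideal $(nF_{\ell,k},F_{\ell,k+1},\ldots,F_{\ell,\ell-1})$ contains $J_{\ell,k+1}(0)$ and reduces modulo $J_{\ell,k+1}(0)$ to the $R_{\ell}$-submodule generated by $nF_{\ell,k}$, which by the same surjectivity of $\varphi$ is exactly $n\mathbb{Z}$; hence this ideal agrees with $I$, giving the two descriptions in the statement.

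The only real obstacle is the computation that turns the $R_{\ell}$-module structure on the quotient into the $\mathbb{Z}$-module structure on $\mathbb{Z}$; once the surjectivity of $\varphi$ (which is immediate from $X_{\ell,\ell}=1$) is in hand, the classification of subideals is simply the classification of subgroups of $\mathbb{Z}$, and everything else is bookkeeping using Propositions~\ref{PropJ} and~\ref{PropF}.
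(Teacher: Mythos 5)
Your proof is correct and takes essentially the same approach as the paper's. The paper simply picks the minimal positive $n$ with $nF_{\ell,k}\in I$ and asserts that $I=J_{\ell,k+1}(0)+(nF_{\ell,k})$ "can be shown easily"; your quotient-module argument, identifying $J_{\ell,k}(0)/J_{\ell,k+1}(0)\cong\mathbb{Z}$ with $R_\ell$ acting through the surjection $\varphi(X_{\ell,j})=p^{\ell-j}$ for $j>k$ and $\varphi(X_{\ell,j})=0$ for $j\le k$, is precisely the content that makes that step easy, so you have simply made the paper's elided verification explicit.
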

\begin{proof}
By Proposition \ref{PropJ}, we have
\begin{eqnarray*}
J_{\ell,k}(0)&=&\Set{\sum_{i=k}^{\ell-1}n_iF_{\ell,i}|n_i\in\Z\ \ (k\le i\le\ell-1)},\\
J_{\ell,k+1}(0)&=&\Set{\sum_{i=k+1}^{\ell-1}n_iF_{\ell,i}|n_i\in\Z\ \ (k+1\le i\le\ell-1)}.
\end{eqnarray*}
When $I\ne J_{\ell,k+1}(0)$, if we put
\[ n=\min\{ n\in\N_{>0}\mid nF_{\ell,k}\in I \}, \]
then 
we can show easily
\[ I=J_{\ell,k+1}(0)+(nF_{\ell,k}). \]
\end{proof}

\begin{prop}\label{Prop0}
In $\Omega_{H_{\ell}}$, there are exactly $\ell+1$ prime ideals
\[ 0\subsetneq\Lcal(0)\subsetneq\Lcal^2(0)\subsetneq\cdots\subsetneq\Lcal^{\ell}(0) \]
over $(0)\subseteq R_0$.

Here, $\Lcal^k(0)$ is the ideal obtained by a $k$-times application of $\Lcal$ to $0\subseteq\Omega_{H_{\ell-k}}$, for each $0\le k\le\ell$.
\end{prop}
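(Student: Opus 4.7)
The plan is to proceed by induction on $\ell$. The base case $\ell=0$ is immediate, since $\Omega_{H_0}\cong\Z$ has $(0)$ as its unique prime over $(0)$.

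For the inductive step, I first verify that each $\Lcal^k(0)$, $0\le k\le\ell$, really is prime: the zero ideal $0\subseteq\Omega_{H_{\ell-k}}$ is prime by Proposition \ref{Prop00prime}, and $k$ successive applications of Corollary \ref{CorPrimePrime} preserve primeness. The inclusion $\Lcal^k(0)\subsetneq\Lcal^{k+1}(0)$ is strict at the $(\ell-k)$-th coordinate, where the former has $0$ but the latter has $L(0)=J_{\ell-k,\ell-k-1}(0)=(F_{\ell-k,\ell-k-1})\ne 0$.

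Next, to prove there are no other primes, I take an arbitrary prime $\I=[I_0,\ldots,I_\ell]$ with $I_0=0$. By Corollary \ref{CorPrimeRestr}, $\I|_{H_{\ell-1}}$ is prime over $(0)$, hence by the inductive hypothesis equals $\Lcal^j(0)\subseteq\Omega_{H_{\ell-1}}$ for a unique $0\le j\le\ell-1$. Proposition \ref{PropLS0}(2) then gives $I_{\ell-1}=L^j(0)=J_{\ell-1,\ell-1-j}(0)$, and any possible extension $I_\ell$ must satisfy $S(I_{\ell-1})\subseteq I_\ell\subseteq L(I_{\ell-1})$. When $j\ge 1$, Proposition \ref{PropLS0}(3) (applied with $k=\ell-1-j\le\ell-2$) collapses both bounds to $J_{\ell,\ell-1-j}(0)$, uniquely forcing $\I=\Lcal^{j+1}(0)$.

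The substantive case is $j=0$: here $I_{\ell-1}=0$, $S(0)=0$, and $L(0)=(F_{\ell,\ell-1})$, so Lemma \ref{LemIdeal0} describes the candidates as $I_\ell=(nF_{\ell,\ell-1})$ for $n\in\N_{\ge 0}$. I must show that only $n=0$ (yielding $\I=0$) and $n=1$ (yielding $\I=\Lcal(0)$) are prime. The main obstacle is that, unlike in Propositions \ref{Propq} and \ref{Propp}, Proposition \ref{PropLSJ} supplies no ``no ideal between'' statement in the zero case, so I must eliminate $n\ge 2$ by hand through $\Pcal(\ell)$. The key computation is: applying $\Pcal(\ell)$ at $i=0$ with $a=F_{\ell,\ell-1}\in L(0)$ and $b=n\in\Z\setminus\{0\}=L_0(I_{-1})\setminus I_0$, Corollary \ref{Corjnd} expands $\jnd^\ell_0(n)=nX_{\ell,\ell}+\sum_{i=0}^{\ell-1}c_iX_{\ell,i}$ for suitable $c_i\in\Z$, while Proposition \ref{PropF}(1) forces $F_{\ell,\ell-1}\cdot X_{\ell,i}=0$ for every $0\le i\le\ell-1$. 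Hence $F_{\ell,\ell-1}\cdot\jnd^\ell_0(n)=nF_{\ell,\ell-1}\in I_\ell$, whereas $a=F_{\ell,\ell-1}\notin(nF_{\ell,\ell-1})$ when $n\ge 2$, violating $\Pcal(\ell)$. The surviving primes are thus $\I=0$ and $\I=\Lcal(0)$ from $j=0$, together with $\Lcal^{j+1}(0)$ for each $j=1,\ldots,\ell-1$, totaling exactly $\ell+1$ ideals, arranged in the strict chain displayed in the statement.
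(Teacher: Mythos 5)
Your proof is correct and follows essentially the same route as the paper: induction on $\ell$, restriction to $H_{\ell-1}$ via Corollary \ref{CorPrimeRestr}, the collapse $S(I_{\ell-1})=L(I_{\ell-1})$ for $j\ge 1$ via Proposition \ref{PropLS0}, and elimination of $n\ge 2$ in the $j=0$ case by applying $\Pcal(\ell)$ to $a=F_{\ell,\ell-1}$ and $b=n$, using $X_{\ell,i}F_{\ell,\ell-1}=0$ to get $a\cdot\jnd^{\ell}_0(b)=nF_{\ell,\ell-1}$. You also supply some details the paper leaves implicit (the base case, strictness of the inclusions), which is a welcome addition but not a genuinely different argument.
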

\begin{proof}
By Corollary \ref{CorPrimePrime} and Proposition \ref{Prop00prime}, each $\Lcal^k(0)$ is prime.

We show the proposition by an induction on $\ell$. Suppose we have done for $\ell-1$, and take any prime ideal $\I=[I_0,\ldots,I_{\ell}]\subseteq\Omega_{H_{\ell}}$ over $0\subseteq R_0$.
Since $\I|_{H_{\ell-1}}$ is prime by Corollary \ref{CorPrimeRestr}, we have $\I|_{H_{\ell-1}}=\Lcal^k(0)$ on $H_{\ell-1}$ for some $0\le k\le\ell-1$, namely
\[ [I_0,\ldots,I_{\ell-1}]=[0,\ldots,0,L(0),\ldots,L^k(0)]. \]

If $k\ge 1$, then Proposition \ref{PropLS0} shows
\[ S(I_{\ell-1})=SL^k(0)=L^{k+1}(0)=L(I_{\ell-1}), \]
and thus $I_{\ell}$ should be equal to $L^{k+1}(0)$. This means $\I=\Lcal^{k+1}(0)$.

If $k=0$, then $I_{\ell}$ should satisfy
\[ 0\subseteq I_{\ell}\subseteq L_{\ell}(0)=(F_{\ell,\ell-1}). \]
By Lemma \ref{LemIdeal0}, there exists $n\in\N_{\ge0}$ satisfying $I_{\ell}=(nF_{\ell,\ell-1})$. Thus it suffices to show that $\I=[0,\ldots,0,(nF_{\ell,\ell-1})]\subseteq\Omega_{H_{\ell}}$ is not prime unless $n=0$ or $1$.

Suppose $\I$ is prime for some $n\ne0$.
Then for $a=F_{\ell,\ell-1}$ and $b=n\in R_0\setminus (0)$, since we have
\begin{eqnarray*}
a\cdot\jnd^{\ell}_0(\ell)&=&F_{\ell,\ell-1}\cdot(n+\sum_{i=0}^{\ell-1}m_iX_{\ell,i})\ \ (\text{for some}\ m_i\in\Z)\\
&=&nF_{\ell,\ell-1}\ \ \in I_{\ell},
\end{eqnarray*}
we obtain $F_{\ell,\ell-1}\in I_{\ell}$ by $\Pcal(\ell)$. This implies $I_{\ell-1}=J_{\ell,\ell-1}(0)$ and $n=1$.
\end{proof}

\subsection{Total picture}

Putting Propositions \ref{Propq}, \ref{Propp}, \ref{Prop0} together, we obtain the following.
\begin{thm}\label{ThmSpec}
Let $G$ be a cyclic $p$-group of $p$-rank $r$. The prime ideals of $\Omega_G\in\Ob(\TamG)$ are as follows.
\begin{itemize}
\item[{\rm (i)}] Over $(q)\subseteq R_0$, there are $\ell+1$ prime ideals
\[ \Scal^r(q)\subsetneq\Lcal\Scal^{r-1}(q)\subsetneq\cdots\subsetneq\Lcal^r(q). \]
\item[{\rm (ii)}] Over $(p)\subseteq R_0$, there is only one prime ideal $\Lcal^r(p)$.
\item[{\rm (iii)}] Over $0\subseteq R_0$, there are $\ell+1$ prime ideals
\[ 0\subsetneq\Lcal(0)\subsetneq\cdots\subsetneq\Lcal^r(0). \]
\end{itemize}
Thus we have
\begin{eqnarray*}
\Spec\,\Omega_G&\!\!\! =&\!\!\!\!\{ \Lcal^r(p)\}\ \cup\ \{\Lcal^i(0)\mid 0\le i\le r\}\\
&&\!\!\cup\ \,\{ \Lcal^i\Scal^{r-i}(q)\mid 0\le i\le r,\ q\ \text{is a prime different from}\ p \}.
\end{eqnarray*}
\end{thm}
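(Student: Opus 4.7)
The plan is to deduce the theorem by fibering $\Spec\,\Omega_G$ over $\Spec\, R_0 = \Spec\,\Z$ via the restriction map $F\colon\Spec\,\Omega_G \to \Spec\,\Z$ sending $\I = [I_0,\ldots,I_r]$ to $I_0$. This map is well-defined because by Corollary \ref{CorPrimeRestr} any restriction of a prime ideal is prime, and in particular $I_0 \subseteq R_0 \cong \Z$ must be a (classical) prime ideal by condition $\Pcal(0)$. Since $\Spec\,\Z = \{0\} \cup \{(q) \mid q \text{ prime}\}$, it suffices to enumerate the prime ideals in each of the three fibers: over $(q)$ for $q\neq p$, over $(p)$, and over $0$. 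Each of these has already been analyzed in Propositions \ref{Propq}, \ref{Propp}, and \ref{Prop0} respectively, so the theorem will follow by simply collecting these three results and taking their union.

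First I would observe that the map $F$ partitions $\Spec\,\Omega_G$ into three disjoint pieces, and that for $G = H_r$ (since $G$ is cyclic of $p$-rank $r$), ``prime ideals over $J \subseteq R_0$'' in the statement of the theorem means prime ideals $\I$ of $\Omega_G = \Omega_{H_r}$ with $I_0 = J$. Then item (i) is the direct content of Proposition \ref{Propq} applied with $\ell = r$: the $(\ell+1) = (r+1)$ prime ideals over $(q)$ form the chain $\Scal^r(q) \subsetneq \Lcal\Scal^{r-1}(q) \subsetneq \cdots \subsetneq \Lcal^r(q)$, and all are prime by the same proposition. Item (ii) is Proposition \ref{Propp} applied with $\ell = r$, which singles out $\Lcal^r(p)$ as the unique prime over $(p)$. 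Item (iii) is Proposition \ref{Prop0} with $\ell = r$, yielding the chain $0 \subsetneq \Lcal(0) \subsetneq \cdots \subsetneq \Lcal^r(0)$.

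Finally, taking the disjoint union of these three fibers gives
\begin{eqnarray*}
\Spec\,\Omega_G &=& \{\Lcal^r(p)\} \,\cup\, \{\Lcal^i(0) \mid 0 \le i \le r\} \\
&& \cup\, \{\Lcal^i\Scal^{r-i}(q) \mid 0 \le i \le r,\ q \text{ is a prime different from } p\},
\end{eqnarray*}
which is exactly the formula in the theorem statement. Since the three propositions have already been established, there is essentially no obstacle remaining; the only thing to verify is that the three families are disjoint, which is immediate from the observation that their images under $F$ are distinct points of $\Spec\,\Z$. The substantive work — the inductive analysis using $\Pcal(k)$, the calculation of $L$ and $S$ applied to the ideals $J_{\ell,k}(x)$ in Proposition \ref{PropLSJ}, and the explicit verification that the listed candidates are the only primes in each fiber — has been carried out in the preceding propositions, so the theorem itself is a direct synthesis.
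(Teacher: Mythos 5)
Your proposal is correct and follows exactly the paper's approach: introduce the fibration $F\colon\Spec\,\Omega_G\to\Spec\,R_0\cong\Spec\,\Z$ and then assemble Propositions \ref{Propq}, \ref{Propp}, and \ref{Prop0} (each applied with $\ell=r$) to enumerate the fibers over $(q)$, $(p)$, and $0$ respectively. The paper's stated proof is literally the one-line synthesis of these three propositions, with the fibration $F$ having been set up at the start of Section~6, so your argument matches it in substance and structure.
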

For each $0\le i\le r$, there is an inclusion $\Lcal^i(0)\subsetneq\Lcal^i\Scal^{r-i}(q)$, while we have $\Lcal^j(0)\subseteq \!\!\!\!\! /\ \Lcal^i\Scal^{r-i}(q)$ for any $j>i$. Moreover, we have
\[ \Lcal^r(0)\subseteq\Lcal^r(p),\ \ \Lcal^r(p)\subseteq\!\!\!\!\! /\ \Lcal^r(q),\ \ \Lcal^r(q)\subseteq\!\!\!\!\! /\ \Lcal^r(p). \]
\begin{cor}\label{CorDim}
The longest sequence of prime ideals of $\Omega_G$ is of length $r+1$, such as
\[ 0\subsetneq\Scal^r(q)\subsetneq\Lcal\Scal^{r-1}(q)\subsetneq\cdots\subsetneq\Lcal^r(q), \]
and thus the dimension of $\Spec\,\Omega_G$, which we denote simply by $\dim\Omega_G$, is
\[ \dim\Omega_G=r+1. \]
\end{cor}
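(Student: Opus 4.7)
The plan is to derive the corollary directly from Theorem \ref{ThmSpec}. Two things are needed: exhibit a chain of prime ideals of length $r+1$, and verify that no strictly longer chain can occur. Since the complete list of primes and the inclusion relations among them are already in hand, no further machinery is required; the argument reduces to a combinatorial bookkeeping of the poset structure of $\Spec\,\Omega_G$.

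For the lower bound I would check that the displayed chain is indeed strict and of length $r+1$. By Proposition \ref{Propq} the $r+1$ ideals $\Scal^r(q)\subsetneq\Lcal\Scal^{r-1}(q)\subsetneq\cdots\subsetneq\Lcal^r(q)$ are prime and strictly increasing, all lying over $(q)\subseteq R_0$. The zero ideal $0\subseteq\Omega_G$ is prime by Proposition \ref{Prop00prime}, and $0\subsetneq\Scal^r(q)=[(q),\ldots,(q)]$ is strict because $q$ belongs to the latter but not to $0$. Prepending $0$ therefore yields a chain of $r+2$ primes, hence of length $r+1$.

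For the upper bound I would use the restriction map $F\colon\Spec\,\Omega_G\to\Spec\,R_0=\Spec\,\Z$ introduced at the start of Section 5. Any chain $\mathcal{C}$ of primes in $\Omega_G$ projects to a chain in $\Spec\,\Z$, which has at most two elements since distinct nonzero primes of $\Z$ are incomparable. If the image is a single prime, then by Theorem \ref{ThmSpec}(i)--(iii) the fiber has at most $r+1$ elements, so $|\mathcal{C}|\le r+1$ and the chain has length at most $r$. Otherwise the image is $\{(0),(p)\}$ or $\{(0),(q)\}$ for some prime $q\ne p$. Then the part of $\mathcal{C}$ over $(0)$ is an initial segment of $0\subsetneq\Lcal(0)\subsetneq\cdots\subsetneq\Lcal^r(0)$ ending at some $\Lcal^i(0)$, while the part above lies either in $\{\Lcal^r(p)\}$ (by Proposition \ref{Propp}) or in a terminal segment $\Lcal^j\Scal^{r-j}(q)\subsetneq\cdots\subsetneq\Lcal^r(q)$ of the $(q)$-tower. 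The cross-tower relations stated after Theorem \ref{ThmSpec}, namely $\Lcal^i(0)\subsetneq\Lcal^i\Scal^{r-i}(q)$ together with $\Lcal^j(0)\not\subseteq\Lcal^i\Scal^{r-i}(q)$ for $j>i$, force $j\ge i$ in the $(q)$-case, so altogether $|\mathcal{C}|\le(i+1)+(r-j+1)\le r+2$. The $(p)$-case contributes one extra prime and is bounded similarly, giving again $|\mathcal{C}|\le r+2$.

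Modulo Theorem \ref{ThmSpec}, the main obstacle is nothing conceptually deep but the careful case analysis tracking how chains can cross between the three towers produced by the classification. The inclusion relations recorded after Theorem \ref{ThmSpec} encode exactly which jumps between towers are permitted, and pinning down the maximal chain length $r+1$ comes from the observation that the optimum $|\mathcal{C}|=r+2$ is realized precisely when $i=j$ in the $(q)$-case, or equivalently when the entire $(q)$-tower is prepended by $0$ as in the displayed chain.
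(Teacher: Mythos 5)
Your proof is correct and follows exactly the implicit argument the paper leaves to the reader: exhibit the explicit chain of length $r+1$, and then use the classification of Theorem~\ref{ThmSpec} together with the cross-tower inclusion/non-inclusion relations recorded right after it to bound any chain by $r+1$. No gaps; this is the intended filling-in of the omitted proof.
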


Let us draw some picture of $\Spec\,\Omega_G$, by indicating the closures of points. If $r=1$ and $G$ is a group of prime order $p$, then the picture will become as follows (\S 3.7 in \cite{N_Specp}).
\begin{ex}\label{ExPicture}
Let $G$ be the group of order $p$. Then we have
\begin{eqnarray*}
\Spec\,\Omega&\!\!\! =&\!\!\! \{\Lcal(p)\}\,\cup\,\{0\}\cup\{ \Lcal(0)\}\\
&&\!\!\cup\, \{ \Scal(q)\mid q\in\Z\ \text{is prime},\ q\ne p\}\cup\{ \Lcal(q)\mid q\in\Z\ \text{is prime},\ q\ne p\}.
\end{eqnarray*}

Inclusions are
\[
\xy
(-14,5)*+{(0)}="0";
(-7,5)*+{\subsetneq}="1";
(0,5)*+{\Lcal(0)}="2";
(7,5)*+{\subsetneq}="3";
(14,5)*+{\Lcal(p)}="4";
(-14,0)*+{\rotatebox{270}{$\subsetneq$}}="5";
(-14,-5)*+{\Scal(q)}="6";
(0,0)*+{\rotatebox{270}{$\subsetneq$}}="5";
(-7,-5)*+{\subsetneq}="9";
(0,-5)*+{\Lcal(q)}="8";
(12,-5)*+{_{(q\ne p)\ .}}="10";
\endxy
\]
Especially, the dimension is $2$.
$\Lcal(p)$ and $\Lcal(q)$'s are the closed points, and $0$ is the generic point in $\Spec\,\Omega$. If we represent the points in $\Spec\,\Omega$ by their closures, $\Spec\,\Omega$ with fibration $F$ can be depicted as in Figure $1$. 
\begin{figure}[h]
\begin{center}
\hspace{-2.6cm}\includegraphics[bb=0 0 600 600,width=4.6cm]{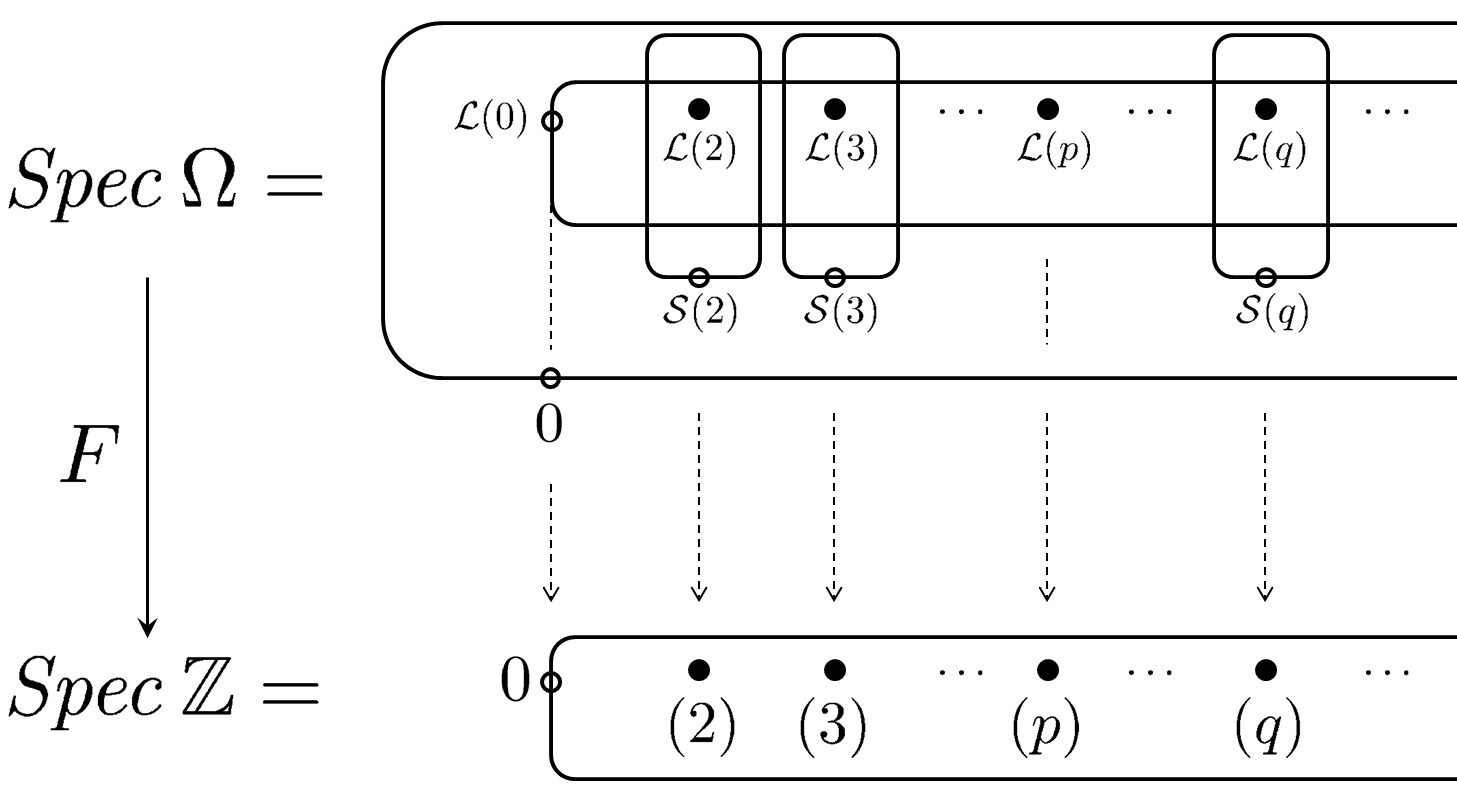}
\caption{$\Spec\,\Omega$ for $G=\Z/p\Z$}
\end{center}
\end{figure}
\end{ex}

A similar description is also possible for an arbitrary $r$. 
\begin{figure}[h]
\begin{center}
\hspace{-2.6cm}\includegraphics[bb=0 0 800 800,width=6cm]{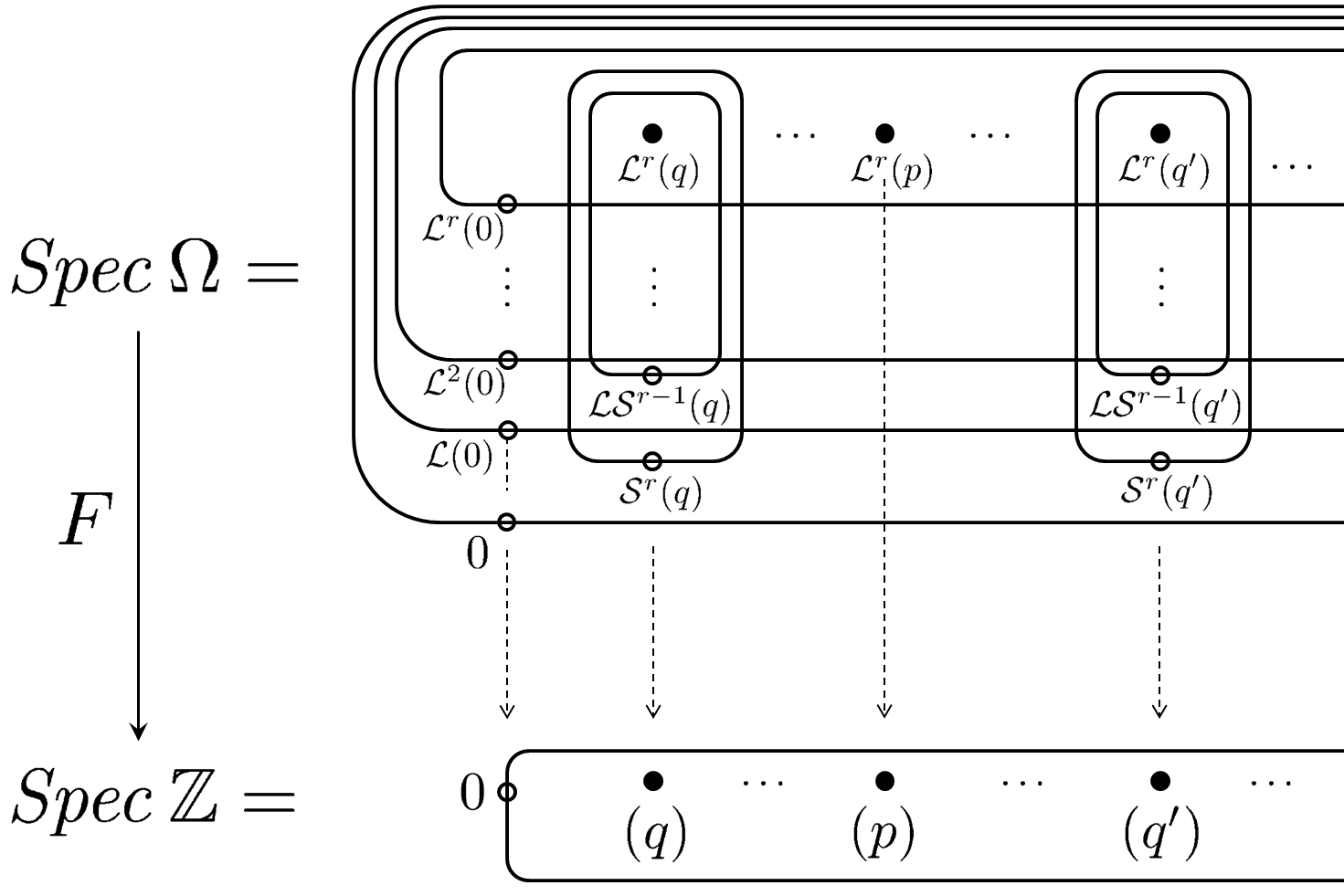}
\caption{$\Spec\,\Omega$ for $G=\Z/p^r\Z$}
\end{center}
\end{figure}
$\Lcal^r(p)$ and $\Lcal^r(q)$'s are the closed points, and $0$ is the generic point in $\Spec\,\Omega$. The picture becomes as in Figure $2$. 

\newpage
\section{Appendix: An inductive calculation of $\jnd$, when $G$ is abel}

We demonstrate how to calculate the multiplicative transfers of $\Omega$ inductively, when $G$ is an abelian group. This will show Corollary \ref{Corjnd}. A detailed investigation of an exponential map will be found in \cite{Yaman}. A more clear argument using M\"obius inversion will be found in \cite{Oda}.

Since $\jnd^L_H\colon\Omega_L(L/H)\rightarrow\Omega_L(L/L)$ and $\jnd^L_H\colon\Omega_G(G/H)\rightarrow\Omega_G(G/L)$ are essentially equal for $H\le L\le G$, we may assume $L=G$ from the first, and calculate $\jnd^G_H\colon\Omega_G(G/H)\rightarrow\Omega_G(G/G)$ for any $H\le G$.
Remark that any element in $\Omega_H(H/H)$ is of the form $\displaystyle\sum_{I\le H}m_IH/I$ for some $m_I\in\Z\ (I\le H)$, and is identified with $(\displaystyle\sum_{I\le H}m_IG/I\overset{\Pro}{\longrightarrow}G/H)\in\Omega_G(G/H)$, where $\Pro$ is the sum of the natural projections $G/I\overset{\pro^H_I}{\longrightarrow}G/H$.

Remark that $\jnd^G_H\colon\Omega_G(G/H)\rightarrow\Omega_G(G/G)$ is a polynomial map \cite{Yoshida2} as in the following. (As for polynomial maps, see \cite{Dress}.)
\begin{fact}
There exist polynomials
\[ P_K\in\Z [m_I\mid I\le H]\quad(K\le G) \]
which satisfy
\begin{equation}\label{Eq*}
\jnd^G_H(\displaystyle\sum_{I\le H}m_IG/I\overset{\Pro}{\longrightarrow}G/H)=\sum_{K\le G}P_K(\{m_I\}_{I\le H})G/K
\end{equation}
for any $\{ m_I\}_{I\le H}$.
\end{fact}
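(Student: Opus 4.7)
The plan is to unfold the definition of the multiplicative transfer via the $\Pi$-construction from Example \ref{ExTam} and count orbits directly. By definition,
\[
\jnd^G_H\Bigl(\sum_{I\le H}m_I G/I\overset{\Pro}{\to}G/H\Bigr)=\Pi_f(A)\to G/G,
\]
where $f=\pro^G_H$, $A=\coprod_I m_I\cdot G/I$, and $\Pi_f(A)$ is the set of set-theoretic sections $\sigma\colon G/H\to A$ with $p\circ\sigma=\id_{G/H}$, equipped with the $G$-action $(g\sigma)(x)=g\sigma(g^{-1}x)$. The goal is then to expand $\Pi_f(A)=\coprod_{K\le G} P_K \cdot G/K$ and check that each orbit count $P_K$ is polynomial in $\{m_I\}_{I\le H}$.

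First I would fix, for each $xH\in G/H$, the identification $p^{-1}(xH)\cong \coprod_{I\le H}m_I\cdot(H/I)$ (the set of cosets in $H/I$ canonically parametrizes the fiber of $G/I\to G/H$ over $xH$, using that $G$ is abelian so $\mathrm{Stab}(xH)=H$). Then, for each subgroup $K\le G$, I would count the sections whose stabilizer contains $K$, i.e.\ the $K$-equivariant sections. Such a section is determined by its values on a set of $K$-orbit representatives in $G/H$; these orbits number $[G:KH]$ by abelianness, and the stabilizer acting on each fiber is $K\cap H$. A fixed point of $K\cap H$ on $H/I$ exists (and then all of $H/I$ is fixed) iff $K\cap H\le I$. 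Therefore the number of $K$-fixed sections is
\[
N_K(\{m_I\})=\Bigl(\sum_{I\ge K\cap H}m_I[H:I]\Bigr)^{[G:KH]},
\]
a manifest $\Z$-polynomial in $\{m_I\}$.

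Next, by Möbius inversion on the subgroup lattice of $G$, the number of sections with stabilizer equal to $K$ is $\sum_{K'\ge K}\mu(K,K')N_{K'}(\{m_I\})$, and dividing by $[G:K]$ yields the number of $G$-orbits of type $G/K$:
\[
P_K(\{m_I\})=\frac{1}{[G:K]}\sum_{K'\ge K}\mu(K,K')\,N_{K'}(\{m_I\}),
\]
which is a polynomial expression in $\{m_I\}$ and finishes the claim modulo integrality of coefficients.

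The main obstacle is confirming that the $P_K$ really lie in $\Z[m_I\mid I\le H]$ and not merely in $\Q[m_I]$ (as the Möbius-inversion formula naïvely suggests). Since $\Pi_f$ is a functorial construction producing an honest $G$-set from $A$, the coefficients $P_K(\{m_I\})$ are non-negative integers for every choice of non-negative integers $\{m_I\}$; combined with the fact that $N_K$ is a $\Z$-polynomial homogeneous of a controlled multidegree, a standard argument (or, alternatively, an induction on $|G|$ using the exponential-diagram decomposition, as in the cyclic $p$-group calculation leading to Corollary \ref{Corjnd}) upgrades integer-valuedness to integrality of coefficients. Assembling these steps gives the desired identity \eqref{Eq*}.
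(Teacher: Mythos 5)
There is a genuine gap, and it sits exactly where the paper chose to cite the literature instead of arguing. The Fact asserts the identity \eqref{Eq*} \emph{for all} integer tuples $\{m_I\}_{I\le H}$, i.e.\ for arbitrary (virtual) elements of the Grothendieck ring $\Omega_G(G/H)$, whereas the $\Pi_f$-construction you unfold from Example \ref{ExTam} only computes $\jnd^G_H$ on classes of honest $G$-sets over $G/H$, i.e.\ when every $m_I\ge 0$. Since $\jnd^G_H$ is multiplicative but not additive, its extension from effective classes to $K_0$ is not formal, and nothing in your argument explains why the polynomials extracted from the effective case also compute $\jnd^G_H$ on formal differences; that is precisely the content of the Fact. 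This is also why the paper does not prove it but quotes it from \cite{Yoshida2} (with \cite{Dress} for the notion of polynomial map), and then performs the orbit count only to identify the polynomials on nonnegative tuples: your $N_K(\{m_I\})=(\sum_{I\ge K\cap H}m_I[H:I])^{[G:KH]}$ is Corollary \ref{CorGK2}, and your M\"obius inversion is equivalent to the paper's recursion for $c(K)$. So your proposal reproduces the computational step (ii) but omits the actual assertion (i) that $\jnd^G_H$ on all of $\Omega_G(G/H)$ is a polynomial map; to close the gap you would need a genuine argument for that — e.g.\ an expansion of $\jnd^G_H(a+b)$ (and hence of $\jnd^G_H(a-b)$) via Tambara's distributive law/exponential diagrams together with an induction over subgroups, or the argument of \cite{Yoshida2} — which you only gesture at in a parenthesis.

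Separately, the final step of your proposal, ``upgrading integer-valuedness to integrality of coefficients,'' is not a valid principle: integer-valued polynomials need not lie in $\Z[m]$ (already $m(m-1)/2$ fails), and indeed the explicit coefficients appearing later, such as $\frac{m^p-m}{p}$ in Corollary \ref{Corjnd}, are not integral as polynomial coefficients. The statement should be read as saying the $P_K$ are (integer-valued) polynomial expressions — which is all that is used, since a polynomial with rational coefficients is determined by its values on nonnegative integer tuples — so your M\"obius-inversion formula with rational coefficients would be perfectly adequate once the main gap above is filled; the attempted integrality upgrade is both unnecessary and, as stated, false.
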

In particular if we obtain polynomials $P_K$ which satisfy $(\ref{Eq*})$ whenever $\{m_I\}_I$ satisfies $m_I\ge 0\ ({}^{\forall}I\le H)$, then  $(\ref{Eq*})$ should hold for any $\{ m_I\}_{I\le H}$ with these $P_K$'s. 
Now we calculate $P_K$, for each $K\le G$. Let $\displaystyle\sum_{I\le H}m_IH/I\in\Omega_H(H/H)$ be any element satisfying $m_I\ge0\ ({}^{\forall}I\le H)$, and denote the corresponding $G$-set $\underset{I\le H}{\coprod}(\underset{m_I}{\amalg}G/I)$ by $A$.
By the definition of $\jnd^G_H=\Omega_{\bullet}(\pro^G_H)$, the $G$-set $S=\jnd^G_H(A\overset{\Pro}{\longrightarrow}G/H)$ is given by
\begin{eqnarray*}
S&=&\{(y,s)\mid y\in G/G,\ s\in\Map((\pro^G_H)^{-1}(y),A),\ \Pro\circ s=\id_{G/H} \}\\
&=&\{ s\in \mathrm{Map}(G/H,A)\mid \Pro\circ s=\id_{G/H} \}\\
&=&\{ s\in \mathrm{Map}(G/H,A)\mid s\ \text{is a section of}\ \Pro \}
\end{eqnarray*}
(remark that $G/G$ consists of only one element). $S$ is equipped with a $G$-action
\[ {}^g\! s(x)=gs(g^{-1}x)\quad({}^{\forall}g\in G,{}^{\forall}s\in S,{}^{\forall}x\in G/H). \]

\begin{dfn}\label{DefcK}
Let $G,H,A,S$ be as above. For each $K\le G$, define $c(K)\in\Z$ by
\[ c(K)=\sharp\{ s\in S\mid G_s=K\}. \]
\end{dfn}
Then, since $G$ is abel, the number of $G$-orbits in $S$ isomorphic to $G/K$ should be equal to $\frac{c(K)}{|G:K|}$, and thus we have
\[ S=\sum_{K\le G}\frac{c(K)}{|G:K|}G/K, \]
namely, $P_K=\frac{c(K)}{|G:K|}$. Thus it remains to calculate $c(K)$.

\bigskip

Since $G$ is abel, we have the following.
\begin{rem}\label{RemGK}
If we decompose $G/K$ into $K$-orbits
\[ G/H=X_1\amalg\cdots\amalg X_{r(K)}, \]
then $r(K)=|K\backslash G/H|=|G:KH|$, and each $X_i$ is isomorphic to
\begin{equation}\label{EqSTAR}
KH/H\cong K/(K\cap H).
\end{equation}
\end{rem}

\bigskip

Fix an element $x_i\in X_i$ for each $1\le i\le r(K)$. Then, we have the following.
\begin{prop}\label{PropGK}
For any element $s\in S$, the following are equivalent.
\begin{enumerate}
\item $K\le G_s$.
\item For any $1\le i\le r(K)$ and any $k\in K$, we have
\begin{equation}\label{Eq***}
ks(x_i)=s(kx_i).
\end{equation}
\end{enumerate}
\end{prop}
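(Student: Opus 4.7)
The plan is to unpack the two conditions in terms of the explicit $G$-action on $S$ given by ${}^g\! s(x)=gs(g^{-1}x)$. Under this action, the condition $k\in G_s$ is equivalent to ${}^k\! s=s$, which in turn is equivalent to the equivariance identity
\[ s(kx)=ks(x)\quad({}^{\forall}x\in G/H). \]
Thus condition (1) of the proposition asserts that this equivariance holds for every $k\in K$ and every $x\in G/H$, whereas condition (2) only asserts it when $x$ runs through the chosen representatives $x_1,\ldots,x_{r(K)}$ of the $K$-orbits in $G/H$.

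With this reformulation, the direction $(1)\Rightarrow(2)$ is immediate: specialize the equivariance to $x=x_i$. For the converse, given $x\in G/H$, let $i$ be the unique index with $x\in X_i$, and write $x=k\ppr x_i$ for some $k\ppr\in K$ (such a $k\ppr$ exists by the definition of $X_i=Kx_i$, though it need not be unique when $x_i$ has nontrivial $K$-stabilizer; any choice will do). Then for $k\in K$, noting $kk\ppr\in K$ since $K$ is a subgroup, two applications of the hypothesis (2) — one to the pair $(k\ppr,x_i)$ and one to the pair $(kk\ppr,x_i)$ — yield
\[ s(kx)=s(kk\ppr x_i)=(kk\ppr)s(x_i)=k\bigl(k\ppr s(x_i)\bigr)=ks(k\ppr x_i)=ks(x), \]
which is exactly the equivariance needed to conclude $k\in G_s$.

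There is no real obstacle here; the statement is essentially a tautology once the $G$-action on $S$ is written out and the orbit decomposition of Remark \ref{RemGK} is used to reduce equivariance on all of $G/H$ to equivariance at a set of orbit representatives. The abelianness of $G$ enters only through the earlier setup (ensuring $X_i\cong K/(K\cap H)$ has the simple form given in Remark \ref{RemGK}); it plays no direct role in the verification itself.
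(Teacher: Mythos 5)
Your proof is correct and takes essentially the same route as the paper's: both reformulate $K\le G_s$ as an equivariance condition $s(kx)=ks(x)$ and reduce from arbitrary $x\in X_i$ to the representative $x_i$ by writing $x=k\ppr x_i$ with $k\ppr\in K$. You simply spell out the chain of equalities that the paper dismisses as "easily shown to be equivalent to (2)."
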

\begin{proof}
By definition, $K\le G_s$ holds if and only if for any $1\le i\le r(K)$, any $x\in X_i$ and any $k\in K$,
\begin{equation}\label{Eq**}
{}^k\! s(x)=s(x)
\end{equation}
is satisfied. Since $X_i$ is $K$-transitive, there is some $k\ppr\in K$ satisfying $x=k\ppr x_i$. Then $(\ref{Eq**})$ is written as
\[ ks(k^{-1}k\ppr x_i)=s(k\ppr x_i). \]
This is easily shown to be equivalent to {\rm (2)}.
\end{proof}

\begin{cor}\label{CorGK}
Let $G,H,A,S$ be as above. For each $K\le G$, there is a bijection
\[ \{ s\in S\mid G_s\ge K\}\cong\prod_{1\le i\le r(K)}(\coprod_{I\le H}(\underset{m_I}\amalg\{ s_i\in G/I\mid \pro^H_I(s_i)=x_i,\ K_{s_i}=K_{x_i} \})). \]
\end{cor}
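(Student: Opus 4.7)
The plan is to produce the bijection by sending $s\in S$ with $K\le G_s$ to the tuple of its values $(s(x_1),\ldots,s(x_{r(K)}))$ on the chosen $K$-orbit representatives, and, in the other direction, extending such a tuple $(s_i)_i$ to a section by the $K$-equivariance rule $s(kx_i)=ks_i$, which makes sense because each $X_i$ is a single $K$-orbit by Remark \ref{RemGK}.

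For the forward map, given $s\in S$ with $K\le G_s$, each value $s(x_i)\in A$ lies in a unique summand of $A=\coprod_{I\le H}(\coprod_{m_I}G/I)$, which picks out a subgroup $I\le H$, an index in the $m_I$-fold coproduct, and an element $s_i:=s(x_i)\in G/I$ with $\pro^H_I(s_i)=x_i$ (because $s$ is a section of $\Pro$). Proposition \ref{PropGK} gives $ks_i=s(kx_i)$ for all $k\in K$; specializing to $k\in K_{x_i}$ yields $K_{x_i}\le K_{s_i}$, while the inclusion $K_{s_i}\le K_{x_i}$ is automatic from $\pro^H_I(s_i)=x_i$. Thus $K_{s_i}=K_{x_i}$, so the tuple lands in the right-hand side.

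For the inverse, given a tuple $(s_i)_i$ as on the right, define $s\colon G/H\to A$ on each orbit $X_i$ by $s(kx_i)=ks_i$. The stabilizer equality $K_{s_i}=K_{x_i}$ is exactly the hypothesis needed for well-definedness: if $kx_i=k\ppr x_i$ then $k^{-1}k\ppr\in K_{x_i}=K_{s_i}$, so $k^{-1}k\ppr\cdot s_i=s_i$. The identity $\pro^H_I(s_i)=x_i$ together with $G$-equivariance of $\pro^H_I$ shows $\Pro\circ s=\id_{G/H}$, so $s\in S$; and for $k\ppr\in K$, the computation $({}^{k\ppr}s)(kx_i)=k\ppr s(k^{\prime-1}kx_i)=k\ppr k^{\prime-1}k\,s_i=ks_i=s(kx_i)$ verifies $K\le G_s$. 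The two constructions are visibly mutually inverse.

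The only real subtlety is bookkeeping: an element of $A$ lying over $x_i$ carries three pieces of data ($I$, an $m_I$-index, and an element of $G/I$), and one must recognize the single value $s(x_i)$ as encoding exactly this. The algebraic content is concentrated in the equality $K_{s_i}=K_{x_i}$, which plays a dual role --- it characterizes the image of the forward map and simultaneously guarantees well-definedness of the extension in the inverse direction. Everything else is a routine unwinding of Proposition \ref{PropGK} and the orbit decomposition from Remark \ref{RemGK}.
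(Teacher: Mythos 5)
Your proof is correct and follows essentially the same route as the paper's: both send $s$ to its tuple of values on orbit representatives, extend back by $K$-equivariance, and identify $K_{s_i}=K_{x_i}$ as the characterizing condition. You spell out slightly more explicitly why the containment $K_{s_i}\le K_{x_i}$ is automatic from $\pro^H_I(s_i)=x_i$, a detail the paper leaves implicit, but there is no substantive difference in the argument.
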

\begin{proof}
By Proposition \ref{PropGK}, the set of $s\in S$ satisfying $G_s\ge K$ is determined by the values $s(x_i)$ $(1\le i\le r(K))$.
In fact, to a set of elements $s_1,\ldots,s_{r(K)}\in A$, we may associate a map $s\colon G/H\rightarrow A$ satisfying $s(x_i)=s_i$, by using $(\ref{Eq***})$. This map $s$ becomes well-defined if and only if $K_{x_i}\le K_{s_i}$ is satisfied for any $i$. Moreover, we see that the following are equivalent.
\begin{itemize}
\item[{\rm (i)}] $s$ is a section of $\Pro\colon A\rightarrow G/H$, and satisfies $G_s\ge K$.
\item[{\rm (ii)}] $\Pro(s_i)=x_i\ (1\le {}^{\forall}i\le r(K))$, and $K_{s_i}=K_{x_i}$.
\end{itemize}
Thus, to give an element in $\{ s\in S\mid G_s\ge K\}$ is equivalent to give a set of elements $s_1,\ldots,s_{r(K)}\in A$ satisfying {\rm (ii)}. Namely, we have a bijection
\begin{eqnarray*}
\{ s\in S\mid G_s\ge K\}&\cong&\prod_{1\le i\le r(K)}\{ s_i\in A\mid \Pro(s_i)=x_i,\ K_{s_i}=K_{x_i} \}\\
&=&\prod_{1\le i\le r(K)}(\coprod_{I\le H}(\underset{m_I}\amalg\{ s_i\in G/I\mid \pro^H_I(s_i)=x_i,\ K_{s_i}=K_{x_i} \})).
\end{eqnarray*}
\end{proof}

\begin{cor}\label{CorGK2}
For any $K\le G$, we have
\[ \sharp\{ s\in S\mid G_s\ge K\}=(\sum_{(K\cap H)\le I\le H}\!\!\!\! m_I|H:I|\ )^{r(K)}. \]
Consequently, we can calculate $c(K)$ inductively by
\[ c(K)=(\sum_{(K\cap H)\le I\le H}\!\!\!\! m_I|H:I|\ )^{r(K)}-\sum_{K<L\le G}c(L). \]
\end{cor}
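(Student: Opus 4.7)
The plan is to use the bijection from Corollary \ref{CorGK} and carry out the counting explicitly. The first key observation is that since $G$ is abelian, for any $I \le G$ and any $g \in G$ the $K$-stabilizer of the coset $gI \in G/I$ equals $K \cap gIg^{-1} = K \cap I$, independent of the representative. In particular $K_{x_i} = K \cap H$ for each chosen $x_i \in X_i$, and for any candidate $s_i \in (\pro^H_I)^{-1}(x_i)$ we have $K_{s_i} = K \cap I$. Hence the condition $K_{s_i} = K_{x_i}$ becomes $K \cap I = K \cap H$; since $I \le H$ forces $K \cap I \le K \cap H$ for free, this equality is equivalent to $K \cap H \le I$.

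When $K \cap H \le I$, every point of the fiber $(\pro^H_I)^{-1}(x_i)$, which has size $|H:I|$, qualifies; when it fails, none do. Consequently
\[ \#\{ s_i \in G/I \mid \pro^H_I(s_i) = x_i,\ K_{s_i} = K_{x_i} \} = \begin{cases} |H:I| & (K \cap H \le I), \\ 0 & \text{otherwise}. \end{cases} \]
Substituting into the bijection of Corollary \ref{CorGK} and summing over $I$ weighted by $m_I$, the inner factor becomes $\sum_{(K \cap H) \le I \le H} m_I |H:I|$, and taking the product over the $r(K)$ indices $i$ yields the claimed identity for $\#\{ s \in S \mid G_s \ge K\}$.

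For the inductive formula, partition according to the exact stabilizer:
\[ \{ s \in S \mid G_s \ge K \} \ =\ \coprod_{K \le L \le G} \{ s \in S \mid G_s = L \}, \]
so the cardinality is $\sum_{K \le L \le G} c(L) = c(K) + \sum_{K < L \le G} c(L)$. Solving for $c(K)$ gives the stated recursion, which determines each $c(K)$ by reverse induction on $K$ in the subgroup lattice (the base case $K = G$ has empty right-hand sum). The only delicate point is the stabilizer computation using abelianness; everything after that is bookkeeping, and together with $P_K = c(K)/|G:K|$ it supplies the explicit polynomials underlying Corollary \ref{Corjnd}.
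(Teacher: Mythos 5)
Your argument is correct and tracks the paper's own proof essentially step for step: both derive $K_{x_i}=K\cap H$ and $K_{s_i}=K\cap I$ from abelianness, reduce the condition $K_{s_i}=K_{x_i}$ to $K\cap H\le I$ via $I\le H$, count the fiber $(\pro^H_I)^{-1}(x_i)$ as $|H:I|$, and take the product over the $r(K)$ orbits. The only thing you add is an explicit justification of the inclusion--exclusion step (partitioning $\{s\mid G_s\ge K\}$ by exact stabilizer), which the paper leaves implicit; that is a harmless elaboration, not a divergence.
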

\begin{proof}
Since $G$ is abel, we have $K_{x_i}=K\cap H$ by $(\ref{EqSTAR})$.
Similarly, for each $s_i\in G/I$, we have $K_{s_i}=K\cap I$.
Since $I\le H$, we obtain
\[ K_{x_i}=K_{s_i}\ \Leftrightarrow\ K\cap H=K\cap I\ \Leftrightarrow\ K\cap H\le I. \]
As a consequence, the bijection in Corollary \ref{CorGK} is reduced to
\[ \{ s\in S\mid G_s\ge K\}\cong\prod_{1\le i\le r(K)}(\coprod_{(K\cap H)\le I\le H}(\underset{m_I}\amalg\{ s_i\in G/I\mid \pro^H_I(s_i)=x_i\})). \]

Since the fiber $(\pro^H_I)^{-1}(x_i)$ has $|H:I|$ elements, it follows
\begin{eqnarray*}
\sharp\{ s\in S\mid G_s\ge K\}&=&\prod_{1\le i\le r(K)}(\sum_{(K\cap H)\le I\le H}(m_I|H:I|))\\
&=&(\sum_{(K\cap H)\le I\le H}\!\!\!\! m_I|H:I|\ )^{r(K)}.
\end{eqnarray*}
\end{proof}

\begin{ex}\label{ExPolyCyc}
When $G=\Z/p^{\ell}\Z$, let $H_k\le G$ be the subgroup of order $p^k$ ;
\[ e=H_0<H_1<\cdots <H_{\ell}=G. \]
For $H=H_k$, we can determine $c(H_j)\ (0\le j\le\ell)$ of $\jnd^G_H(\displaystyle\sum_{0\le i\le k}m_iH/H_i)$ as follows.

By Corollary \ref{CorGK2}, for each $0\le j\le\ell$ we have
\[ c(H_j)=%
\begin{cases}
\ \ m_k& j=r,\\ \\ 
\ \ (m_k)^{p^{\ell-j}}-(m_k)^{p^{\ell-j-1}}& k\le j<r,\\ \\
\ \ (\displaystyle{\sum_{s=j}^k}m_sp^{k-s})^{p^{\ell-k}}-(\displaystyle{\sum_{s=j+1}^k}m_sp^{k-s})^{p^{\ell-k}}& 0\le j<k,
\end{cases}
\]
which leads to Corollary \ref{Corjnd}.
\end{ex}


\begin{thebibliography}{10}                                                      



\bibitem {Bouc} Bouc, S.: \emph{Green functors and $G$-sets}, Lecture Notes in Mathematics, 1671, Springer-Verlag, Berlin (1977).

\bibitem{Brun} Brun, M.: \emph{Witt vectors and Tambara functors}. Adv. in Math. \textbf{193} (2005) 233--256.


\bibitem{Dress}Dress, A.W.M.: \emph{Operations in representation rings. Representation theory of finite groups and related topics} (Proc. Sympos. Pure Math., Vol. XXI, Univ. Wisconsin, Madison, Wis., 1970), pp. 39--45. Amer. Math. Soc., Providence, R.I., 1971. 







\bibitem{N_Frac} Nakaoka, H.: \emph{On the fractions of semi-Mackey and Tambara functors}. J. of Alg. \textbf{352} (2012) 79--103.

\bibitem{N_Ideal} Nakaoka, H.: \emph{Ideals of Tambara functors}. Adv. in Math. \textbf{230} (2012) 2295--2331.

\bibitem{N_TamMack} Nakaoka, H.: \emph{Tambarization of a Mackey functor and its application to the Witt-Burnside construction}. Adv. in Math. \textbf{227} (2011) 2107--2143.

\bibitem{N_DressPolyHopf} Nakaoka, H.: \emph{A generalization of the Dress construction for a Tambara functor, and its relation to polynomial Tambara functors}. Adv. in Math. \textbf{235} (2013) 237--260.

\bibitem{N_Specp} Nakaoka, H.: \emph{On the calculation of the spectra of Burnside Tambara functors}. S\=urikaisekikenky\=usho K\=oky\=uroku No. 1811 (2012), 78--92.


\bibitem{Oda} Oda, F.: \emph{Tensor induction for Burnside rings of the cyclic $p$-groups}, to appear.

\bibitem{O-Y3} Oda, F.; Yoshida, T.: \emph{Crossed Burnside rings III: The Dress construction for a Tambara functor}, J. of Alg. \textbf{327} (2011) 31--49.

\bibitem{Tam} Tambara, D.: \emph{On multiplicative transfer}. Comm. Algebra \textbf{21} (1993), no. 4, 1393--1420.


\bibitem{Yaman} Yaman, A.: \emph{On the exponential map of the Burnside ring}. Thesis, Bilkent University.

\bibitem{Yoshida2} Yoshida, T.: \emph{On the unit groups of Burnside rings}. J. Math. Soc. Japan \textbf{42} (1990), no. 1, 31--64.

\bibitem{Yoshida} Yoshida, T.: \emph{Polynomial rings with coefficients in Tambara functors}. (Japanese) S\=urikaisekikenky\=usho K\=oky\=uroku No. 1466 (2006), 21--34.
\end{thebibliography}
\end{document}